\newtheorem{theorem}{Theorem}[section]
\newtheorem{lemma}[theorem]{Lemma}
\newtheorem{statement}[theorem]{Statement}
\newtheorem{openproblem}[theorem]{Open Problem}
 \newtheorem{corollary}[theorem]{Corollary}
\newcommand{\thm}[1]{Theorem~\ref{#1}}
\newcommand{\lem}[1]{Lemma~\ref{#1}}
\newcommand{\cor}[1]{Corollary~\ref{#1}}
\newcommand{\sectio}[1]{Section~\ref{#1}}
 \DeclareMathOperator{\Lip}{Lip}
 \DeclareMathOperator{\dist}{dist}
\newcommand{\sect}[1]{\section{#1}  \setcounter{equation}{0}  }
\newcommand{\norm}[2]{\left\|#1\right\|_{#2}}
\newcommand{\ds}{\displaystyle}
\newcommand{\cl}{cl}
\newcommand{\Poly}{\Pi}
\newcommand{\Pn}{\Poly_n}
\newcommand{\E}{{\mathcal E}}
\newcommand{\A}{{\mathcal A}}
\newcommand{\B}{\mathcal B}
\newcommand{\NN}{{\mathcal N}}
\newcommand{\N}{\mathbb N}
\newcommand{\DD}{{\mathcal D}}
\newcommand{\andd}{\quad\mbox{\rm and}\quad}
\newcommand{\ttau}{\widetilde\tau}
\newcommand{\J}{{\mathcal{J}}}
\newcommand\w{{\omega}}
\newcommand{\C}{C}
\newcommand{\mon}{\Delta^{(1)}}
\def\be  {\begin{equation}}
\def\ee  {\end{equation}}
\newcommand{\ineq}[1]{{\rm(\ref{#1})}}
\newcommand{\ie}{{\em i.e., }}
\newcommand{\eg}{{\em e.g.}}
\newcommand{\st}{\;\; \big| \;\;}
\title{{\sc Interpolatory pointwise estimates for monotone polynomial approximation}}
\author{K. A.  Kopotun\thanks{Department of Mathematics, University of
Manitoba, Winnipeg, Manitoba, R3T 2N2, Canada ({\tt kirill.kopotun@umanitoba.ca}).  Supported by NSERC of Canada Discovery Grant RGPIN 04215-15.} \and
D. Leviatan\thanks{Raymond and Beverly Sackler School of Mathematical
Sciences, Tel Aviv University, Tel Aviv 69978, Israel ({\tt leviatan@post.tau.ac.il}).}
\and
I. A. Shevchuk\thanks
{Faculty of Mechanics and Mathematics, Taras
Shevchenko National University of Kyiv, 01601 Kyiv, Ukraine ({\tt shevchuk@univ.kiev.ua}).}
}
\begin{document}

 \maketitle

\begin{abstract}
Given a nondecreasing function $f$ on $[-1,1]$, we investigate how well it can be approximated by nondecreasing algebraic polynomials that interpolate it at $\pm 1$. We establish pointwise estimates of the approximation error  by such polynomials that yield interpolation at the endpoints (\ie the estimates become zero at $\pm 1$).
We call such estimates ``interpolatory estimates''.

In 1985, DeVore and Yu were the first to obtain this kind of results for monotone polynomial approximation.
 Their estimates involved the second modulus of smoothness $\omega_2(f,\cdot)$ of $f$ evaluated at $\sqrt{1-x^2}/n$ and were valid for all $n\ge1$.
 The current paper is devoted to proving that if $f\in C^r[-1,1]$, $r\ge1$, then the interpolatory estimates are valid for the second modulus of smoothness of $f^{(r)}$, however, only for $n\ge\NN$ with $\NN= \NN(f,r)$, since it is known that such estimates are in general invalid with $\NN$ independent of $f$.

Given a number $\alpha>0$, we write $\alpha=r+\beta$ where $r$ is a nonnegative integer and $0<\beta\le1$, and denote by $\Lip^*\alpha$ the class of all functions $f$ on $[-1,1]$ such that $\w_2(f^{(r)}, t) = O(t^\beta)$. Then, one  important corollary of the main theorem in this paper is the following result that  has been an open problem for $\alpha\geq 2$ since 1985:
\begin{quote}
If $\alpha>0$, then a function $f$ is nondecreasing and in $\Lip^*\alpha$, if and only if, there exists a constant $C$ such that,  for all sufficiently large $n$, there are nondecreasing polynomials $P_n$, of degree $n$, such that
\[
|f(x)-P_n(x)| \leq C \left( \frac{\sqrt{1-x^2}}{n}\right)^\alpha, \quad x\in [-1,1].
\]
\end{quote}

\end{abstract}

%

\sect{Introduction and main results}

Given a nondecreasing function $f$ on $[-1,1]$ and a set $\Xi := \{\xi_i\}_{i=1}^m \subset [-1,1]$ ($\xi_i\neq \xi_j$ if $i\neq j$), is there a nondecreasing algebraic polynomial that not only approximates $f$  well but also interpolates $f$ at the points in $\Xi$?
For a general set $\Xi$, the answer is clearly ``no''. If $m\geq 3$, then the nondecreasing interpolating polynomial may not exist at all (consider $f$ which is constant on $[\xi_1,\xi_2]$ and such that $f(\xi_3)>f(\xi_2)$).

If $m=1$, then the case for interpolation at either $-1$ or $1$ (but not both) was considered in \cite{GLSW}, and we leave the discussion of the case when $-1 < \xi_1 < 1$ for another time.

Finally, if $m=2$, then the nondecreasing polynomial interpolating $f$ at $\xi_1$ and $\xi_2$  exists, but it does not   approximate $f$ well at all if $[\xi_1,\xi_2] \neq [-1,1]$ (again, consider $f$ which is constant on $[\xi_1,\xi_2]$ and is strictly increasing outside this interval). Hence, for $m=2$, the only non-trivial case that remains is when the nondecreasing polynomial interpolates $f$ at the endpoints of $[-1,1]$.
 We call the pointwise estimates of the degree of approximation of $f$ by such polynomials that yield interpolation at the endpoints (\ie the estimates become zero at $\pm 1$) ``interpolatory estimates in monotone polynomial approximation''.

We also note that the situation with strictly increasing functions is rather different (see \eg \cites{iliev, pr} and the references therein), since for any strictly increasing function $f$ and any collection of points $\Xi$, there exists a strictly increasing polynomial of a sufficiently large degree that interpolates $f$ at all points in $\Xi$. How well this polynomial approximates $f$ is an interesting problem but we do not consider it in this manuscript.

More discussions of various related results  on monotone approximation can be found in our survey paper \cite{KLPS}.

For $r\in\N$, let $C^r[a,b]$, $-1\le a<b\le1$, denote the space of $r$ times continuously differentiable functions on $[a,b]$, and let $C^0[a,b]=C[a,b]$ denote the space of continuous functions on $[a,b]$, equipped with the uniform norm $\|\cdot\|_{[a,b]}$.

For $f\in C[a,b]$ and any $k\in\N$, set
\[
\Delta^k_u(f,x;[a,b]):=
\begin{cases} \ds \sum_{i=0}^k(-1)^i  \binom ki    f(x+(k/2-i)u),&\quad x\pm (k/2)u\in[a,b] , \\
0,&\quad{\rm otherwise},
\end{cases}
\]
and denote by
$$
\omega_k(f,t;[a,b]):=\sup_{0<u\le t}\|\Delta^k_u(f,\cdot;[a,b])\|_{[a,b]}
$$
its $k$th modulus of smoothness.
When dealing with $[a,b]=[-1,1]$, we suppress referring to the interval, that is, we denote  $\|\cdot\|:=\|\cdot\|_{[-1,1]}$ and $\omega_k(f,t):=\omega_k(f,t;[-1,1])$.

Finally, let
\be\label{varphi}
\varphi(x):=\sqrt{1-x^2}\quad\text{and}\quad\rho_n(x):=\frac{\varphi(x)}n+\frac1{n^2},
\ee
and denote by $\mon$ the class of all nondecreasing functions on $[-1,1]$, and by $\Pn$   the space of algebraic polynomials of degree $\le n$.

In 1985, DeVore and Yu \cite{DY}*{Theorem 1}  proved that, for  $f\in\C[-1,1]\cap\mon$ and any $n\in\N$, there exists a   polynomial $P_n\in\Pn\cap\mon$ such that
\be \label{dyineq}
|f(x)-P_n(x)|\le c\omega_2\left(f,\frac{\varphi(x)}n\right),\quad x\in[-1,1],
\ee
where $c$ is an absolute constant.

In 1998, it was proved in \cite{LS98}*{Theorem 4}   that there exists  $f\in \C[-1,1]\cap \mon$ such that
\be \label{lsineq}
\limsup_{n\to\infty} \inf_{P_n\in\Pn\cap\mon} \max_{x\in [-1,1]} \frac{|f(x)-P_n(x)|}{\w_3(f, \rho_n(x))} = \infty ,
\ee
which implies that $\w_2$ in  \ineq{dyineq}   cannot be replaced by $\w_3$ even if the constant $c$ and how large $n$ is are allowed to depend on the function $f$.

If the function $f$ is smoother, then the following is valid (see \cite{S}):
\begin{quote}
For any $k,r\in\N$ and $f\in\C^r[-1,1]\cap\mon$,   there exists a sequence  of polynomials $P_n\in\Pn\cap \mon$ such that, for every $n\geq k+r-1$ and each $x\in[-1,1]$, we have
\[
|f(x)-P_n(x)|\le c(k,r)\rho_n^r(x)\omega_k(f^{(r)},\rho_n(x)) .
\]
\end{quote}

A natural question now is whether  \ineq{dyineq} may be strengthened for functions having higher smoothness.
More precisely, the following problem  needs to be resolved: find all values of $k\in\N$ and $r\in\N_0$ such that the following statement is true, and investigate whether or not the number $\NN$ in this statement has to depend on $f$.

 \begin{statement} \label{stat}
For every $f\in\C^r[-1,1]\cap\mon$, $r\ge1$, there exist a number $\NN\in\N$ and a sequence $\{P_n\}_{n=\NN}^\infty$ of polynomials $P_n\in\Pn\cap \mon$ such that, for every $n\geq \NN$ and each $x\in[-1,1]$, we have
 \be \label{problemineq}
|f(x)-P_n(x)|\le c(k,r)\left(\frac{\varphi(x)}n\right)^r\omega_k\left(f^{(r)},\frac{\varphi(x)}n\right) .
 \ee
 \end{statement}
In view of \ineq{dyineq} and \ineq{lsineq}, Statement~\ref{stat} is true if $k+r \leq 2$ (with $\NN=1$) and is not true for $r=0$ and $k\geq 3$.

Using  the same method as was used to prove  \cite{GLSW}*{Theorem 4}  one can show that, for any $r\in\N$ and each $n\in\N$, there is a function $f\in\C^r[-1,1]\cap\mon$, such that for every polynomial $P_n\in\Pn\cap\mon$ and any positive on $(-1,1)$ function $\psi$ such that $\lim_{x\to \pm 1} \psi(x)=0$, either
\be \label{glswineq}
\limsup_{x\to -1} \frac{|f(x)-P_n(x)|}{\varphi^2(x) \psi(x)} = \infty \quad \mbox{\rm or}\quad
\limsup_{x\to 1} \frac{|f(x)-P_n(x)|}{\varphi^2(x)\psi(x)} = \infty .
\ee
In particular, this implies that Statement~\ref{stat} is not valid with $\NN$ independent of $f$ if $k+r\geq 3$. However, in this paper, we show that this statement is valid for $k=2$ and any $r\in\N$ provided that $\NN$ depends on $f$. Namely, the following theorem is the main result in this manuscript.

\begin{theorem}\label{thm1} Given $r\in\N$, there is a constant $c=c(r)$ with the property that if $f\in C^r[-1,1]\cap\mon$, then there exists a number $\NN=\NN(f,r)$, depending on $f$ and $r$, such that for every $n\ge \NN$, there is  $P_n\in \Pn \cap \mon$  satisfying
\be\label{interpol}
|f(x)-P_n(x)|\le c(r)\left(\frac{\varphi(x)}n\right)^r\omega_2\left(f^{(r)},\frac{\varphi(x)}n\right),\quad x\in[-1,1] .
\ee
Moreover, for $x\in \left[-1, -1+ n^{-2}\right] \cup \left[1-n^{-2}, 1\right]$ the following stronger estimate is valid:
\be\label{interpol1}
|f(x)-P_n(x)|\le c(r)\varphi^{2r}(x)\omega_2\left(f^{(r)},\frac{\varphi(x)}n\right).
\ee
\end{theorem}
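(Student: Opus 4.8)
The plan is to build $P_n$ by correcting a good (unconstrained) approximant of $f^{(r)}$ and then integrating $r$ times. First I would fix, via Telyakovskii–Gopengauz-type results, a polynomial $q_{m}\in\Pi_{m}$ with $m\sim n$ that approximates $f^{(r)}$ with the interpolatory pointwise rate $|f^{(r)}(x)-q_{m}(x)|\le c\,\omega_2\!\bigl(f^{(r)},\varphi(x)/n\bigr)$ and, crucially, interpolates $f^{(r)}$ together with enough derivatives at $\pm1$ so that the $r$-fold antiderivative $Q_n$ of $q_m$ (with the $r$ constants of integration chosen to match the Taylor data of $f$ at, say, $-1$) already satisfies the two desired estimates \eqref{interpol}–\eqref{interpol1} as a \emph{pointwise} bound — the endpoint refinement coming from the interpolation conditions plus the elementary inequality $\varphi(x)/n\le\varphi^2(x)$ on $\bigl[-1,-1+n^{-2}\bigr]\cup\bigl[1-n^{-2},1\bigr]$. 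The remaining issue is monotonicity: $Q_n$ need not be nondecreasing, because $q_m$ need not be nonnegative even though $f^{(r)}$ may be small.

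The key step is therefore to replace $q_m$ by a \emph{nonnegative} polynomial that retains the pointwise estimate. Here is where $\NN=\NN(f,r)$ enters and why it must depend on $f$. Since $f\in C^r\cap\mon$, $g:=f^{(r)}$ is continuous and $\ge0$ at every point where $f'$ vanishes is not automatic — rather, I would use that $f'\ge0$ to control the \emph{sign pattern and size} of $g=f^{(r)}$ near its zeros. Concretely, one shows that for $n$ large (depending on the modulus of continuity of $g$, hence on $f$) the set where $g$ is ``dangerously small or negative'' is covered by finitely many intervals on which one can insert a fixed nonnegative polynomial correction: I would approximate $g$ from below by a nonnegative piecewise polynomial / use the standard device of adding $c\,\omega_2(g,\rho_n(\cdot))$-sized nonnegative ``bumps'' localized by the Dzjadyk/Chebyshev partition $\{x_j\}$ with $x_j=\cos(j\pi/n)$, exploiting that on each $[x_{j+1},x_j]$ the quantity $\varphi(x)/n$ is comparable to $|I_j|$. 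The construction of such localized nonnegative polynomial kernels with controlled tails is classical (kernels of the form $\bigl(\text{Chebyshev-type}\bigr)^2$ times a polynomial that is $\approx1$ on $I_j$ and decays like a power of $|x-x_j|/|I_j|$ off it), so after summing a bounded number of them against appropriate nonnegative coefficients $\le c\,\omega_2(g,|I_j|)$ we obtain $\widetilde q_m\ge0$ on $[-1,1]$ with $|g(x)-\widetilde q_m(x)|\le c\,\omega_2(g,\varphi(x)/n)$ and the same endpoint interpolation data as before. Then $P_n:=$ the $r$-fold antiderivative of $\widetilde q_m$, with integration constants matched to $f$ at $-1$, lies in $\Pi_n\cap\mon$ (choosing $m=n-r$).

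Finally I would verify the two displayed bounds for this $P_n$. Writing $f(x)-P_n(x)=\int\!\cdots\!\int_{-1}^{x}\bigl(g-\widetilde q_m\bigr)$ an $r$-fold integral, I estimate the inner error by $c\,\omega_2(g,\varphi(t)/n)$ and use the standard monotonicity/Whitney-type facts about $t\mapsto\omega_2(g,\varphi(t)/n)$ together with $\int_{-1}^{x}\varphi^{r-1}(t)\,dt\le c\,\varphi^{r}(x)/\dots$ — more precisely the elementary estimate $\bigl|\int_{-1}^{x}(x-t)^{r-1}h(t)\,dt\bigr|\le c\,(\varphi(x)/n)^r\sup_{t}h(t)$-type bounds adapted near the endpoints — to recover \eqref{interpol}; on $\bigl[-1,-1+n^{-2}\bigr]$ one instead pulls out $\varphi^{2r}(x)$ using $x-t\le x+1\le\varphi^2(x)$ for $t\in[-1,x]$, giving \eqref{interpol1}. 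The symmetric endpoint $[1-n^{-2},1]$ is handled either by symmetry or by matching the Taylor data of $f$ at $+1$ instead and repeating the argument. The main obstacle is the nonnegative localized correction of $q_m$ while simultaneously preserving the high-order interpolation conditions at $\pm1$ needed for \eqref{interpol1}; keeping the bump kernels from destroying those conditions (e.g. by forcing each bump to vanish to order $r$ at $\pm1$, or by absorbing their endpoint values into the integration constants) is the delicate bookkeeping, and it is precisely this step that forces $\NN$ to depend on $f$, since the number and placement of the corrections is governed by how fast $\omega_2(f^{(r)},\cdot)$ decays.
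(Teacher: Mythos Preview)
There is a fundamental gap. Your plan is to replace $q_m\approx f^{(r)}$ by a \emph{nonnegative} polynomial $\widetilde q_m$ still satisfying $|f^{(r)}(x)-\widetilde q_m(x)|\le c\,\omega_2\bigl(f^{(r)},\varphi(x)/n\bigr)$, and then to assert that its $r$-fold antiderivative lies in $\mon$. For $r\ge 2$ this cannot work, because $f\in\mon$ constrains only $f'$, not $f^{(r)}$. Take $f(x)=2x+x^3/3$: then $f'(x)=2+x^2>0$, yet $f''(x)=2x$ equals $-2$ at $x=-1$ while $\omega_2(f'',\cdot)\equiv 0$, so no nonnegative $\widetilde q_m$ can satisfy the required bound; a small $C^r$ perturbation gives nondegenerate examples with $\omega_2(f^{(r)},\cdot)$ arbitrarily small compared to $|\inf f^{(r)}|$. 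Moreover, for $r\ge 3$ the implication ``$\widetilde q_m\ge 0\Rightarrow P_n\in\mon$'' is itself false: with Taylor data matched at $-1$ one has
\[
P_n'(x)=\sum_{i=0}^{r-2}\frac{f^{(i+1)}(-1)}{i!}(x+1)^i+\frac{1}{(r-2)!}\int_{-1}^x(x-t)^{r-2}\,\widetilde q_m(t)\,dt,
\]
and the Taylor part need not be nonnegative on all of $[-1,1]$ (e.g.\ $r=3$, $f'(-1)=1$, $f''(-1)=-1$, which is consistent with $f\in\mon\cap C^r$). Monotonicity is a first-order constraint; it cannot be enforced at the level of the $r$-th derivative.

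The paper does not pass through $f^{(r)}$ at all. It first approximates $f$ itself by a nondecreasing continuous piecewise polynomial $S\in\Sigma_{r+2,n}$ already satisfying the target interpolatory estimates (\lem{spline}, built on \cite{LP}), and then approximates this monotone spline by a monotone polynomial via \thm{step111}: an unconstrained polynomial approximant $D_{n_1}(\cdot,S)$ of $S$ is corrected by adding auxiliary polynomials $\overline Q_n$, $M_n$ whose derivatives are engineered to dominate precisely on the set where $D_{n_1}'$ might dip below zero. The dependence $\NN=\NN(f,r)$ enters through the size of $S'$ near $\pm 1$ (conditions \ineq{d+}--\ineq{d-is0}), governed by the first nonvanishing derivatives $f^{(i)}(\pm 1)$, not through the decay of $\omega_2(f^{(r)},\cdot)$ as you suggest.
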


Given a number $\alpha>0$, we write $\alpha=r+\beta$ where $r$ is a nonnegative integer and $0<\beta\le1$. Denote by $\Lip^*\alpha$ the class of all functions $f$ on $[-1,1]$ such that $\w_2(f^{(r)}, t) = O(t^\beta)$.

An immediate corollary of \thm{thm1} and the classical (Dzyadyk) converse theorems for approximation by algebraic polynomials is the following result on characterization of $\Lip^*\alpha$.

\begin{corollary} \label{maincor}
If $\alpha>0$, then a function $f$ is nondecreasing and in $\Lip^*\alpha$, if and only if, there exists a constant $C$ such that, for sufficiently large $n$, there are nondecreasing polynomials $P_n$ of degree $n$ such that
\[
|f(x)-P_n(x)| \leq C \left( \frac{\sqrt{1-x^2}}{n}\right)^\alpha, \quad x\in [-1,1].
\]
\end{corollary}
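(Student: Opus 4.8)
The plan is to deduce Corollary~\ref{maincor} from \thm{thm1} together with the standard converse (inverse) theorems of Dzyadyk--Timan type, treating the two implications separately.

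\medskip\noindent\textbf{The direct implication.} Suppose $f$ is nondecreasing and $f\in\Lip^*\alpha$ with $\alpha=r+\beta$, $0<\beta\le1$. If $r\ge1$, then $f\in C^r[-1,1]\cap\mon$ and $\omega_2(f^{(r)},t)\le Mt^\beta$ for some constant $M$. Applying \thm{thm1}, for every $n\ge\NN(f,r)$ we obtain $P_n\in\Pn\cap\mon$ with
\[
|f(x)-P_n(x)|\le c(r)\left(\frac{\varphi(x)}{n}\right)^r\omega_2\!\left(f^{(r)},\frac{\varphi(x)}{n}\right)\le c(r)M\left(\frac{\varphi(x)}{n}\right)^{r+\beta}=C\left(\frac{\varphi(x)}{n}\right)^\alpha,
\]
which is exactly the asserted estimate with $C=c(r)M$; that the estimate need only hold for sufficiently large $n$ matches the ``for all $n\ge\NN$'' conclusion of the theorem. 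If $r=0$ (so $0<\alpha=\beta\le1$), one instead invokes the DeVore--Yu estimate \ineq{dyineq}, which is valid for all $n\ge1$ with an absolute constant, and again bounds $\omega_2(f,\varphi(x)/n)\le M(\varphi(x)/n)^\beta$.

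\medskip\noindent\textbf{The converse implication.} Now suppose there is a constant $C$ and, for all large $n$, nondecreasing polynomials $P_n$ of degree $n$ with $|f(x)-P_n(x)|\le C(\varphi(x)/n)^\alpha$ for $x\in[-1,1]$. First, since the right-hand side tends to $0$ uniformly in $x$, $f$ is a uniform limit of the $P_n$, hence continuous; and since each $P_n$ is nondecreasing on $[-1,1]$, so is the limit $f$, i.e. $f\in\mon$. Second, the pointwise bound $|f(x)-P_n(x)|\le C\rho^\alpha$ with $\rho=\varphi(x)/n\le\rho_n(x)$ (up to the harmless $n^{-2}$ term, which one absorbs) is precisely the hypothesis of the classical converse theorem for approximation by algebraic polynomials in the Dzyadyk/Brudnyi form: a pointwise rate of order $(\varphi(x)/n)^\alpha$ implies $f\in C^r[-1,1]$ and $\omega_2(f^{(r)},t)=O(t^\beta)$, that is, $f\in\Lip^*\alpha$. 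Here one simply quotes the inverse theorem (e.g. from Dzyadyk's or DeVore--Lorentz's book); the monotonicity of the $P_n$ plays no role in this direction — only the approximation rate matters.

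\medskip\noindent I expect no genuine obstacle here: the corollary is essentially a dictionary translation. The one point requiring a little care is matching the weight $\varphi(x)/n$ in \thm{thm1} with the weight $\rho_n(x)=\varphi(x)/n+n^{-2}$ appearing in the textbook statements of the converse theorems, and checking that the converse theorem is available in exactly the pointwise ``$\omega_2$ of the $r$th derivative'' formulation needed (as opposed to a $K$-functional or a one-sided version); both are standard, so the proof is short once the correct references are cited.
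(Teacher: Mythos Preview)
Your proposal is correct and matches the paper's own approach: the paper does not give a detailed proof of \cor{maincor} either, merely stating that it is ``an immediate corollary of \thm{thm1} and the classical (Dzyadyk) converse theorems,'' and noting separately that the range $0<\alpha<2$ already follows from \ineq{dyineq}. Your split into the case $r\ge1$ (via \thm{thm1}) and $r=0$ (via DeVore--Yu), together with the classical inverse theorem for the other direction, is exactly this.
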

Note that, for $0<\alpha<2$, \cor{maincor} follows from \ineq{dyineq} (and was stated   in \cite{DY}).

In order to state another  corollary of \thm{thm1} we recall that $W^r$ denotes the space of $(r-1)$ times continuously differentiable functions on $[-1,1]$ such that $f^{(r-1)}$ is absolutely continuous in $(-1,1)$ and
$\norm{f^{(r)}}{\infty} < \infty$,
where $\norm{\cdot}{\infty}$ denotes the essential supremum  on $[-1,1]$.

\begin{corollary} \label{secondcor}
For any   $f\in W^r \cap\mon$, $r\in\N$, there exists a number
$\NN=\NN(f,r)$,   such that for every $n\ge \NN$,
\[
\inf_{P_n\in \Pn \cap \mon} \norm{ \frac{f-P_n}{\varphi^r} }{\infty} \leq \frac{c(r)}{n^r} \norm{f^{(r)}}{\infty}  .
\]
\end{corollary}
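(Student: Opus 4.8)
The plan is to deduce this directly from \thm{thm1} by bounding the modulus of smoothness $\omega_2(f^{(r)},\cdot)$ in terms of $\|f^{(r)}\|_\infty$. Fix $f\in W^r\cap\mon$. Strictly speaking $f$ need not lie in $C^r[-1,1]$, so the first step is a routine reduction: since $f^{(r)}\in L^\infty[-1,1]$, we have $f^{(r-1)}$ Lipschitz, hence $f\in C^{r-1}[-1,1]$, and one may regularize (e.g.\ by a small Steklov-type average or by approximating $f^{(r)}$ from below in $L^\infty$ by continuous nondecreasing-derivative data) to obtain functions in $C^r[-1,1]\cap\mon$ converging to $f$ appropriately, or alternatively one observes that the proof of \thm{thm1} itself goes through for $f\in W^r\cap\mon$ with $\omega_2(f^{(r)},\cdot)$ interpreted via the essential sup; I would simply remark that \thm{thm1} applies in this generality, since the construction only uses $f^{(r)}\in L^\infty$ together with $\omega_2$-estimates.

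Granting that, apply \thm{thm1}: for $n\ge\NN(f,r)$ there is $P_n\in\Pn\cap\mon$ with
\[
|f(x)-P_n(x)|\le c(r)\left(\frac{\varphi(x)}{n}\right)^r\omega_2\!\left(f^{(r)},\frac{\varphi(x)}{n}\right),\qquad x\in[-1,1].
\]
Now use the elementary bound $\omega_2(g,t)\le 4\|g\|_\infty$ for any $g\in L^\infty$ and any $t>0$ (each second difference is a sum of four point values of $g$ with coefficients $1,-2,1$; in the essential-sup interpretation this still gives $\omega_2(g,t)\le 4\|g\|_\infty$). With $g=f^{(r)}$ this yields
\[
|f(x)-P_n(x)|\le 4c(r)\left(\frac{\varphi(x)}{n}\right)^r\|f^{(r)}\|_\infty,\qquad x\in[-1,1],
\]
and after dividing by $\varphi^r(x)$ and taking the essential supremum over $x\in[-1,1]$ we obtain $\bigl\|(f-P_n)/\varphi^r\bigr\|_\infty\le 4c(r)\,n^{-r}\|f^{(r)}\|_\infty$. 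Relabelling the constant $c(r):=4c(r)$ and taking the infimum over $P_n\in\Pn\cap\mon$ gives exactly the claimed inequality for all $n\ge\NN(f,r)$.

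There is essentially no obstacle here beyond the bookkeeping of the first paragraph: the only mildly delicate point is justifying that \thm{thm1} (whose statement is phrased for $f\in C^r[-1,1]$) may be invoked for $f\in W^r\cap\mon$, i.e.\ that the one derivative of integrability we lose by passing from $C^r$ to $W^r$ costs nothing. I would handle this by the regularization remark above — approximating $f$ by $C^r$ functions $f_\varepsilon\in\mon$ with $f_\varepsilon\to f$ uniformly and $\|f_\varepsilon^{(r)}\|_\infty\le\|f^{(r)}\|_\infty+\varepsilon$, applying \thm{thm1} and the estimate $\omega_2(f_\varepsilon^{(r)},t)\le 4\|f_\varepsilon^{(r)}\|_\infty$ to each $f_\varepsilon$, and passing to the limit (note $\NN$ may be taken uniform along the sequence since, once $\omega_2$ is replaced by the crude bound $4\|f_\varepsilon^{(r)}\|_\infty$, the threshold $\NN$ in \thm{thm1} depends on $f_\varepsilon$ only through this bound and the smoothness class — or, more simply, one applies \thm{thm1} to $f$ itself after the density reduction shows $f\in W^r\cap\mon$ is covered). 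Everything else is immediate.
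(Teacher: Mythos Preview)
Your core idea --- apply \thm{thm1} and bound the modulus by $\|f^{(r)}\|_\infty$ --- is exactly right, and the paper itself treats the corollary as immediate.  But the regularization paragraph contains a genuine gap, and the gap is not mere bookkeeping.

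The claim that ``$\NN$ depends on $f_\varepsilon$ only through this bound'' is false.  Inspecting the proof of \thm{thm1} (see \sectio{sec555} and \thm{step111}), the threshold $\NN$ depends on $f$ through the quantities $D_\pm(r,f)$, which are built from the pointwise values $f^{(i)}(\pm1)$ for $1\le i\le r$, and through a majorant $\psi\sim\omega_2(f^{(r)},\cdot)$ in $\Phi^2$, not merely through $\|f^{(r)}\|_\infty$.  For a mollifying sequence $f_\varepsilon$ of a generic $W^r$ function, the values $f_\varepsilon^{(r)}(\pm1)$ need not stabilize, so there is no reason $\NN(f_\varepsilon,r)$ stays bounded.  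Moreover, even granting a polynomial $P_n$ approximating $f_\varepsilon$, the triangle inequality fails to close: $|f-f_\varepsilon|$ is uniformly small but not $O(\varphi^r)$ near $\pm1$, so you cannot divide by $\varphi^r$ and pass to the limit.  The alternative suggestion that ``the proof of \thm{thm1} goes through for $W^r$'' runs into the same obstruction: the spline construction in \lem{spline} uses $f^{(r)}(\pm1)$ explicitly.

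The clean fix avoids regularization entirely.  Since $W^r\subset C^{r-1}[-1,1]$, apply \thm{thm1} with the smoothness parameter $r-1$ (for $r\ge2$; for $r=1$ use \ineq{dyineq} directly, as the paper itself remarks).  This gives, for $n\ge\NN(f,r-1)$, a polynomial $P_n\in\Pn\cap\mon$ with
\[
|f(x)-P_n(x)|\le c(r-1)\left(\frac{\varphi(x)}{n}\right)^{r-1}\omega_2\!\left(f^{(r-1)},\frac{\varphi(x)}{n}\right).
\]
Because $f^{(r-1)}$ is absolutely continuous with $(f^{(r-1)})'=f^{(r)}\in L^\infty$, one has $\omega_2(f^{(r-1)},t)\le 2\,\omega_1(f^{(r-1)},t)\le 2t\|f^{(r)}\|_\infty$, and the desired estimate follows at once.
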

Note that, for $r\le 2$, \cor{secondcor}   follows from \ineq{dyineq} with $\NN=1$.

The paper is organized as follows. In Section~\ref{section2}, we introduce various notations that are used throughout the paper. Several inequalities for the Chebyshev partition are discussed in Section~\ref{aux}, and Section~\ref{auxresults} is devoted to a discussion  of polynomial approximation of indicator functions. In Section~\ref{section5}, we prove several auxiliary results on various properties of piecewise polynomials. We need those since our proof of \thm{thm1} will be based on approximating $f$ by certain monotone piecewise polynomial functions, and then approximating these functions by monotone polynomials. In Section~\ref{section6}, we discuss approximation of monotone piecewise polynomials with ``small'' first derivatives by monotone polynomials. Section~\ref{section7} is devoted to constructing a certain partition of unity. Simultaneous polynomial approximation of piecewise polynomials and their derivatives is discussed in Section~\ref{section8}  and, in Section~\ref{yetanother}, we construct one particular polynomial with controlled first derivative. Finally, in Section~\ref{sec5}, we use all these auxiliary results to prove a lemma on monotone polynomial approximation of piecewise polynomials that is then used in Section~\ref{sec555} to prove \thm{thm1}.


We conclude this section by stating the following open problem.

\begin{openproblem} Find all pairs $(r,k)$ with $r\in\N$ and $k\geq 3$ for which  Statement~$\ref{stat}$
 is valid (with $\NN$ dependent on $f$).
\end{openproblem}


\sect{Notations} \label{section2}

Recall that the Chebyshev partition of $[-1,1]$ is the ordered set
$X_n:=(x_{j})_{j=0}^n$, where
\[
x_j := x_{j,n} := \cos(j\pi/n) , \quad 0\leq j \leq n.
\]
We refer to $x_j$'s as ``Chebyshev knots'' and note that $x_j$'s are the extremum points of the Chebyshev polynomial of the first kind of degree $n$.
It is also convenient to denote $x_j:= x_{j,n}:=1$ for $j<0$ and $x_j:=x_{j,n}:=-1$  for $j>n$.
Also, let
$I_j := [x_{j},x_{j-1}]$,
$h_j :=   |I_{j}| := x_{j-1}-x_j$, and
\[
\chi_j(x) := \chi_{[x_j, 1]}(x) =
\begin{cases}
1, & \mbox{\rm if } x_j \leq x \leq 1, \\
0, & \mbox{\rm otherwise}.
\end{cases}
\]
Denote by $\Sigma_{k}:=\Sigma_{k,n}$ the set of all right continuous piecewise polynomials of degree $\leq k-1$ with knots at $x_j$, $1\leq j \leq n-1$.
That is,
\[
S\in \Sigma_{k} \quad \mbox{\rm if and only if} \quad    S|_{[x_j, x_{j-1})} \in\Poly_{k-1} ,  \;  2\le j\le n, \andd S|_{[x_1, 1]} \in\Poly_{k-1} .
\]
Throughout this paper, for $S\in\Sigma_k$,  we denote the polynomial piece of $S$ inside the interval $I_j$ by $p_j$, \ie
\[
p_j :=  p_j(S) :=  S|_{[x_j, x_{j-1})} , \quad 2\le j\le n , \andd p_1 :=  p_1(S) :=  S|_{[x_1, 1]} .
\]
For $k\in\N$, let $\Phi^k$ be the class of all ``$k$-majorants'', \ie continuous nondecreasing functions $\psi$ on $[0,\infty)$ such that $\psi(0)=0$ and $t^{-k} \psi(t)$ is nonincreasing on $[0,\infty)$.
In other words,
\[
\Phi^k = \left\{ \psi\in C[0,\infty)  \;  \big| \;  \psi\uparrow, \;  \psi(0)=0, \; \text{and}\;   t_2^{-k} \psi(t_2) \leq t_1^{-k} \psi (t_1)
   \; \mbox{\rm for $0<t_1\leq t_2$} \right\} .
\]
Note that, given $f\in\C^r[-1,1]$,  while the  function $\phi(t) := t^r \w_k(f^{(r)}, t)$ does not have to be in $\Phi^{k+r}$, it is equivalent to a function from $\Phi^{k+r}$. Namely,
$\phi(t) \leq \phi^*(t) \leq 2^{k} \phi(t)$,
where $\phi^*(t) := \sup_{u>t} t^{k+r} u^{-k-r} \phi(u) \in \Phi^{k+r}$ (see, \eg, \cite{DS}*{p. 202}).

For $1\le i,j\le n$, let
\[
I_{i,j}:=\bigcup_{k=\min\{i,j\}}^{\max\{i,j\}}I_k = \left[ x_{\max\{i,j\}}, x_{\min\{i,j\}-1} \right]
\]
and
\[
h_{i,j}:=|I_{i,j}|=\sum_{k=\min\{i,j\}}^{\max\{i,j\}}h_k = x_{\min\{i,j\}-1} - x_{\max\{i,j\}} .
\]
In other words, $I_{i,j}$ is the smallest interval that contains both $I_i$ and $I_j$, and $h_{i,j}$ is its length.

For $\phi\in\Phi^k$, which is not identically zero (otherwise everything is either trivial or of no value), and $S\in\Sigma_k$, denote
\be\label{bij}
b_{i,j}(S,\phi):=
\frac{\|p_i-p_j\|_{I_i}}{\phi(h_j)}\left(\frac{h_j}{h_{i,j}}\right)^k,\quad1\le i,j\le n .
\ee
(Note that $b_{i,j}(S,\phi)=a_{i,j}(S)/{\phi(h_j)}$ with $a_{i,j}$  defined in  \cite{LS2002}*{(6.1)}.)

Also, for   $S\in\Sigma_{k}$ and an interval $A\subseteq [-1,1]$ containing at least one interval $I_\nu$, denote
\[
b_k(S,\phi, A)  :=\max_{1\leq i,j\leq n} \left\{ b_{i,j}(S,\phi) \st I_i\subset  A\andd I_j\subset  A\right\} ,
\]
and
\[
b_k(S,\phi) :=  b_{k}(S,\phi, [-1,1]) =  \max_{1\le i,j\le n}b_{i,j}(S,\phi).
\]

Throughout this paper, we reserve the notation ``$c$'' for positive constants that are either absolute or may only depend on the parameter $k$ (and eventually will depend on $r$). We use the notation ``$C$'' and ``$C_i$'' (the latter only in \sectio{sec5}) for all other positive constants and indicate in each section the parameters that they may depend on.
All constants $c$ and $C$ may be different on different occurrences (even if they appear in the same line), but the indexed constants $C_i$ are fixed throughout   \sectio{sec5}.


\sect{Inequalities for the Chebyshev partition}\label{aux}

In this section,  we collect all the facts and inequalities for the Chebyshev partition that we need throughout this paper.

It is rather well known (see, \eg, \cite{DS}*{pp. 382-383, 408}) and not too difficult to verify that
\begin{align} \label{rho}
\frac{\varphi(x)}n<\rho_n(x)<h_{j}&<5\rho_n(x),\quad x\in I_{j},\quad1\le j\le n,\\ \nonumber
h_{j\pm 1}&<3h_{j},\quad1\le j\le n,
\end{align}
and
\begin{align}\label{rho1}
\rho^2_n(y)&<4\rho_n(x)(|x-y|+\rho_n(x)) \andd \\
(|x-y|+\rho_n(x))/2&<(|x-y|+\rho_n(y))<2(|x-y|+\rho_n(x)) ,\quad x,y\in[-1,1]\nonumber.
\end{align}
(We remark that the inequalities on the second line in \ineq{rho1} immediately follow from the estimate on the first line.)

Also, we observe that
\be \label{newauxest}
 \rho_n(x)  \le  |x-x_j|,\quad \text{for any }\; 0\leq j\leq n \andd  x\notin(x_{j+1}, x_{j-1}) .
\ee
Indeed, \ineq{newauxest} holds for $x=x_{j \pm 1}$ by \ineq{rho}, and for all other $x\notin(x_{j+1}, x_{j-1})$, it follows from the inequalities
$x+\rho_n(x) \leq x_{j+1}+\rho_n(x_{j+1})$ if $x<x_{j+1}$, and
$x-\rho_n(x) \geq x_{j-1}-\rho_n(x_{j-1})$ if $x> x_{j-1}$, that can be verified directly or using the fact that   $x +\rho_n(x)$ increases on $\left[-1, n/\sqrt{n^2+1}\right] \supset [-1, x_1]$ and $x -\rho_n(x)$ increases on $\left[-n/\sqrt{n^2+1}, 1\right] \supset [x_{n-1}, 1]$.

Now, denote
\[
 \psi_j :=\psi_j(x) :=  \frac{|I_j|}{|x-x_j|+|I_j|}
\andd
\delta_n(x):= \min\{ 1, n\varphi(x)\} ,
\quad x\in [-1,1],
\]
and note that
\[
\delta_n(x)=1 \quad \text{if}\quad x\in[x_{n-1},x_1]
\]
and
\[
\delta_n(x)\le n\varphi(x)< \pi\delta_n(x)  \quad \text{if}\quad x\in[-1,x_{n-1}]\cup[x_1,1].
\]

It follows from \ineq{rho} and \ineq{rho1} that
\be \label{anotherauxest}
\rho_n^2(x) < 4 \rho_n(x_j) \left(|x-x_j|+\rho_n(x_j) \right) < 8 h_j \left(|x-x_j|+\rho_n(x) \right) ,
\ee
and thus
\be \label{auxsum}
\left( \frac{\rho_n(x)}{\rho_n(x) + |x-x_j|} \right)^2 <   \frac{8h_j}{\rho_n(x) + |x-x_j|} < c \psi_j(x) .
\ee
Similarly, \ineq{rho} and \ineq{rho1} imply (see, \eg, \cite{K-sim}*{(26)}) that
\be \label{hjrho}
c\psi_j^{2}(x)\rho_n(x) \leq \rho_n(x_j)\leq c\psi_j^{-1}(x)\rho_n(x), \quad 1\leq j \leq n \andd x\in [-1,1],
\ee
where $c$ are some absolute constants.

It is not difficult to see that, for all $1\leq j\leq n$ and $x\in [-1,1]$,
\be\label{distance1}
\rho_n(x) + \dist(x, I_j) \leq  \rho_n(x) + |x-x_j|  \leq 16 \left( \rho_n(x) + \dist(x, I_j) \right) .
\ee
Indeed, the first inequality in \ineq{distance1} is obvious, and the second   follows from
\[
 |x-x_j| \leq 4\dist(x, I_j) + 15\rho_n(x) ,
\]
which is verified using \ineq{rho} and separately considering the cases $x\in I_{j-1}\cup I_j \cup I_{j+1}$ and $x\not\in I_{j-1}\cup I_j \cup I_{j+1}$ (in the latter case,
 there is at least one interval $I_i$, $i\neq j$, between $x$ and $I_j$,  so that $|x-x_j| \leq h_j +  \dist(x, I_j)   \leq 4 \dist(x, I_j)$).

Also, it is straightforward to check   that
\be \label{sumpsi}
\sum_{j=1}^{n}\psi_j^{2}(x) \leq c , \quad x\in [-1,1] ,
\ee
and so, by virtue of \ineq{distance1} and \ineq{auxsum},
\be \label{sumest}
\sum_{j=1}^{n} \left( \frac{\rho_n(x)}{\rho_n(x) + \dist(x, I_j)} \right)^4 \leq c .
\ee

In order to quote several results from \cite{K-sim} in the form used in this paper we need the following observation.
First,
it is known (see, \eg, \cite{K-sim}*{Proposition 4}) that
\[
\frac{1-x^2}{ (1+x_{j-1})(1-x_j)} \leq  2\psi_j^{-2}(x),   \quad 1\leq j \leq n \andd -1\leq x\leq 1 .
\]
Now, since
\[
\min_{1\leq j \leq n} \left\{ (1+x_{j-1})(1-x_j) \right\}
\geq 1-x_1 \geq 2/n^2 ,
\]
we have
\[
\frac{1-x^2}{ (1+x_{j-1})(1-x_j)} \leq \frac{n^2 \varphi^2(x)}{2}  ,
\]
and hence, for all $1\leq j \leq n$ and $x\in [-1,1]$,
\be \label{delta}
\frac{1-x^2}{(1+x_{j-1})(1-x_j)} \leq 2 \min\{1, n^2 \varphi^2(x) \} \psi_j^{-2}(x) = 2 \delta_n^2(x) \psi_j^{-2}(x) .
\ee
Conversely, by \ineq{hjrho}
\be \label{delta1}
\frac{1-x^2}{(1+x_{j-1})(1-x_j)}\geq  \frac{c\varphi^2(x)}{ n^2\rho_n^2(x_j)}\geq c\psi_j^{2}(x) \frac{n^2\varphi^2(x)}{(n\varphi(x)+1)^2}
\geq c\psi_j^{2}(x)\delta_n^2(x),
\ee
where the first inequality is valid since
\[
(1+x_{j-1})(1-x_j) = 1-x_j^2 + h_j(1-x_j) \leq n^2\rho_n^2(x_j) + \rho_n (x_j) \le 2 n^2\rho_n^2(x_j).
\]

\sect{Auxiliary results on polynomial approximation of indicator functions} \label{auxresults}
All constants $C$ in this section depend on $\alpha$ and $\beta$.
\begin{lemma}\label{lem5}
Given $\alpha,\beta \geq 1$, there exist polynomials $\tau_j$, $1\leq j\leq n-1$,
of degree $\le Cn$  satisfying, for all $x\in[-1,1]$,
\be \label{taudoubleprime}
\tau_j'(x)\ge C|I_j|^{-1}\delta_n^{8\alpha}(x)\psi_j^{30(\alpha+\beta)}(x),
\ee
\be \label{derivatives}
\left|\tau_j^{(q)}(x)\right|\leq C|I_j|^{-q}\delta_n^\alpha(x)\psi_j^\beta(x),\quad 1\leq q\leq \alpha ,
\ee
and
\be\label{tauj}
|\chi_j(x)-\tau_j(x)|\le
C\delta_n^\alpha(x) \psi_j^\beta(x).
\ee
\end{lemma}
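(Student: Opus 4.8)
The plan is to build $\tau_j$ as a suitably normalized integral of a polynomial kernel that concentrates near the knot $x_j$, mimicking a smoothed version of the indicator $\chi_j=\chi_{[x_j,1]}$. The natural starting point is the classical Dzyadyk-type kernel: consider the polynomial
\[
k_{j}(t):=\frac{1}{\lambda_j}\left(\frac{T_m\!\left(\frac{t-x_j}{?}\right)}{?}\right)^{?},
\]
but more transparently, I would follow the construction in \cite{K-sim}: take $J_j(x):=\left(\frac{T_m(y(x))}{m^2(y(x)-y(x_j))}\right)^{s}$ suitably modified, where $y$ maps $[-1,1]$ onto itself and $T_m$ is the Chebyshev polynomial of degree $m\sim n$, so that $J_j$ is a nonnegative polynomial of degree $\le Cn$ which is $\approx 1$ on $I_j$ and decays like $\psi_j^{2s}(x)$ away from $I_j$ (this is exactly the behaviour recorded in the estimates \ineq{auxsum} and \ineq{hjrho}). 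Then one sets
\[
\tau_j(x):=\frac{1}{A_j}\int_{-1}^{x}\bigl(K_j(t)+\varepsilon_j\bigr)\,dt,
\]
where $K_j\ge 0$ is a polynomial peaking at $x_j$ (essentially $-J_j'$ or a product of shifted kernels), $\varepsilon_j>0$ is a tiny positive constant ensuring strict monotonicity, and $A_j$ normalizes so that $\tau_j(1)\approx\chi_j(1)=1$. The degree is $\le Cn$ by construction.

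The three estimates are then obtained as follows. For \eqref{taudoubleprime}, since $\tau_j'=\frac{1}{A_j}(K_j+\varepsilon_j)$ with $K_j\ge 0$, the lower bound reduces to showing $K_j(x)\ge C|I_j|^{-1}\delta_n^{8\alpha}(x)\psi_j^{30(\alpha+\beta)}(x)$, which follows from the standard lower bounds for Chebyshev-kernel powers together with the comparisons \ineq{hjrho} and the role of $\delta_n$ near the endpoints (choosing the exponent $s$ in the kernel large enough, depending on $\alpha,\beta$, to beat $30(\alpha+\beta)$). For \eqref{derivatives} with $1\le q\le\alpha$, differentiating $q-1$ more times uses Markov–Bernstein-type inequalities for polynomials on $I_j$ (each derivative costs a factor $\lesssim |I_j|^{-1}$ on $I_j$ and is controlled by $\psi_j$-powers off $I_j$), again absorbing the loss into the freely chosen kernel exponent; the factor $\delta_n^\alpha\psi_j^\beta$ on the right is then comfortably dominated. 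For \eqref{tauj}, one splits into $x\le x_j$ and $x\ge x_j$: for $x\ge x_j$ one writes $\chi_j(x)-\tau_j(x)=1-\tau_j(x)=\frac{1}{A_j}\int_x^1(K_j+\varepsilon_j)$, and the integral of the tail of the concentrated kernel over $[x,1]$ is estimated by $C\delta_n^\alpha(x)\psi_j^\beta(x)$ using the decay bounds and \ineq{auxsum}, \ineq{distance1}; for $x\le x_j$ the estimate $\tau_j(x)=\frac{1}{A_j}\int_{-1}^x(K_j+\varepsilon_j)$ is bounded similarly, with the contribution of $\varepsilon_j$ made negligible by taking $\varepsilon_j$ exponentially small in $n$ (it only has to keep $\tau_j$ strictly increasing, and $n^{-\text{large}}$ is $\le C\delta_n^\alpha\psi_j^\beta$ everywhere since both $\delta_n$ and $\psi_j$ are $\ge c/n^2$).

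The main obstacle I expect is the simultaneous bookkeeping of the two "smallness" parameters $\delta_n(x)$ (endpoint degeneracy) and $\psi_j(x)$ (distance-from-$I_j$ decay): one must choose a single kernel exponent $s=s(\alpha,\beta)$ large enough that, after at most $\alpha$ differentiations and after normalization by $A_j\approx 1$, the lower bound still carries the high power $\psi_j^{30(\alpha+\beta)}\delta_n^{8\alpha}$ while the upper bounds retain only $\psi_j^\beta\delta_n^\alpha$ — the gap between these is exactly what forces the "$30(\alpha+\beta)$" and "$8\alpha$" and must be verified to be consistent. A secondary technical point is handling $x$ in the two extreme intervals $I_1$ and $I_n$, where $\delta_n(x)$ is genuinely $<1$ and $h_j\sim n^{-2}$; here the inequalities \ineq{delta}–\ineq{delta1} relating $(1-x^2)/((1+x_{j-1})(1-x_j))$ to $\delta_n^2\psi_j^{\pm2}$ are the right tool, and I would lean on the already-cited form of these kernel estimates from \cite{K-sim} rather than rederive them, citing \cite{K-sim}*{(26)} and the surrounding lemmas for the precise decay of $J_j$ and its derivatives.
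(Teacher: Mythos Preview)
Your approach is essentially that of the paper: define $\tau_j$ as the normalized antiderivative of a nonnegative Chebyshev-type kernel concentrated at $x_j$, cite \cite{K-sim} for the upper bounds \ineq{derivatives} and \ineq{tauj}, and read off the lower bound \ineq{taudoubleprime} from the explicit form of the kernel together with \ineq{delta1}. Two corrections are worth noting. First, the additive $\varepsilon_j$ is unnecessary: the paper's kernel $(1-y^2)^{\xi}t_j^{\mu}(y)$, with $t_j$ the explicit sum of squares from \cite{K-sim}, is already strictly positive on $(-1,1)$, so $\tau_j'>0$ comes for free; moreover your justification that $\varepsilon_j(1+x)/A_j$ is harmless because ``$\delta_n\ge c/n^2$'' is wrong, since $\delta_n(\pm1)=0$. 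Second, for \ineq{derivatives} Markov--Bernstein is not the natural tool (it gives global bounds, not the required pointwise $\delta_n^{\alpha}\psi_j^{\beta}$ decay); the paper simply quotes \cite{K-sim}*{Lemma~6}, whose proof differentiates the explicit integrand directly, and combines it with \ineq{delta} after fixing $\mu=\lceil 10\alpha+10\beta\rceil$, $\xi=\lceil 3\alpha\rceil$.
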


\begin{proof}
First, estimates \ineq{derivatives} and \ineq{tauj} immediately follow from \cite{K-sim}*{Lemma 6} taking into account \ineq{delta} and setting
$\mu := \lceil 10\alpha+10\beta \rceil$ and $\xi := \lceil 3\alpha \rceil$ in that lemma.
Estimate \ineq{taudoubleprime} was not proved in \cite{K-sim}, and so, even though its proof is very similar to that of \ineq{derivatives} and \ineq{tauj}, we adduce it here for the sake of completeness.

Recall the definition of polynomials $\tau_j$:
\be \label{deftau}
\tau_j(x) = d_j^{-1} \int_{-1}^x (1-y^2)^\xi t_j^\mu(y)\, dy ,
\ee
where
\be \label{deftj}
t_j(x) := \left( \frac{\cos 2n\arccos x}{x-x_j^0} \right)^2 + \left( \frac{\sin 2n\arccos x}{x-\bar x_j} \right)^2 ,
\ee
$\bar x_j := \cos((j-1/2)\pi/n)$ for $1\leq j \leq n$,
$x_j^0 := \cos((j-1/4)\pi/n)$ for $1\leq j <n/2$,
$x_j^0 := \cos((j-3/4)\pi/n)$ for $n/2\leq j \leq n$, and the normalizing
constants $d_j$ are chosen so that $\tau_j(1)=1$.

It is known (see, \eg, \cite{K-sim}*{(22), Proposition 5}) and is not difficult to prove that
\be \label{behtj}
 t_j(x) \sim (|x-x_j|+h_j)^{-2} , \quad x\in [-1,1] \andd 1\leq j \leq n,
\ee
and
\[
d_j  \sim (1+x_{j-1})^{\xi} (1-x_{j})^{\xi} h_j^{-2\mu+1}, \quad \text{if }\; \mu\geq \xi+1.
\]
Here and later, by $X\sim Y$ we mean that there exists a positive constant $c$ (independent of the important parameters) such that $c^{-1} X \le Y \le c X$.

 Hence, using \ineq{delta1}, we have
\begin{align*}
\tau_j'(x) &=  d_j^{-1} (1-x^2)^\xi t_j^\mu(x)\\
& \geq   C \frac{h_j^{2\mu-1}}{(1+x_{j-1})^\xi(1-x_{j})^\xi} (1-x^2)^\xi (|x-x_j|+h_j)^{-2\mu} \\
& \geq
C   h_j^{-1} \delta_n^{2\xi}(x) \psi_j^{2\mu+2\xi}(x) \\
&\geq
C   h_j^{-1} \delta_n^{8\alpha}(x) \psi_j^{30(\alpha+\beta)}(x) .
\end{align*}

\end{proof}

\begin{lemma}\label{lemnew5}
Given $\alpha,\beta >0$, there exist polynomials $\ttau_j$, $1\leq j\leq n-1$,
of degree $\le Cn$  satisfying
\be \label{decreasing}
\ttau_j'(x) \leq 0, \quad \mbox{\rm for }\; x\in [-1, x_j] \cup [x_{j-1}, 1] ,
\ee
and, for all $x\in[-1,1]$,
\be \label{newderivatives}
\left|\ttau'_j(x) \right| \leq C|I_j|^{-1} \delta_n^\alpha(x) \psi_j^\beta(x)
\ee
and
\be\label{newtauj}
|\chi_j(x)-\ttau_j(x)|\le
C\delta_n^\alpha(x) \psi_j^\beta(x).
\ee
\end{lemma}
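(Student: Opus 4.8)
The plan is to build $\ttau_j$ by starting from the polynomials $\tau_j$ supplied by Lemma~\ref{lem5} and subtracting a suitable correction to force monotonic decrease of $\chi_j-\ttau_j$ outside $I_j$, while keeping the uniform approximation estimate. Concretely, pick integer parameters $\alpha^*\ge\alpha$ and $\beta^*\ge\beta$ large enough (say $\alpha^*:=\lceil\alpha\rceil$ and $\beta^*$ a large multiple of $\alpha^*+\lceil\beta\rceil$, to be fixed at the end so that the exponents work out), and apply Lemma~\ref{lem5} with these parameters to get a polynomial $\tau_j\in\Pi_{Cn}$ with $\tau_j'(x)\ge C|I_j|^{-1}\delta_n^{8\alpha^*}(x)\psi_j^{30(\alpha^*+\beta^*)}(x)>0$ on $[-1,1]$, together with $|\chi_j-\tau_j|\le C\delta_n^{\alpha^*}(x)\psi_j^{\beta^*}(x)$ and the derivative bounds \ineq{derivatives}. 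The issue with $\tau_j$ is that it is globally increasing, whereas $\chi_j$ is flat on $[-1,x_j)$ and on $[x_j,1]$; so $\chi_j-\tau_j$ is \emph{decreasing} on $[x_j,1]$ already, but \emph{increasing} on $[-1,x_j)$, which is the wrong direction there. Hence I cannot simply take $\ttau_j:=\tau_j$.

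First I would fix the left interval $[-1,x_j]$. Consider $\sigma_j(x):=\tau_j(x)-\tau_j(x_j)\cdot\eta_j(x)$ where $\eta_j$ is a nonnegative polynomial, $\eta_j(x)\approx 1$ for $x\le x_j$ and rapidly decaying for $x>x_{j-1}$; a convenient choice is $\eta_j:=1-\tau_j^{*}$ for another majorant-type polynomial, or more simply a high power of the Chebyshev-type kernel used to build $t_j$ in \ineq{deftj}, normalized to be $1$ at $-1$. A cleaner route, which I would actually pursue: note $1-\chi_j=\chi_{[-1,x_j)}$ is, up to reflection $x\mapsto -x$, an indicator of the form $\chi_{n-j}$ on the reversed Chebyshev partition, so Lemma~\ref{lem5} applied to the reflected partition yields a polynomial $g_j$ with $g_j'\le 0$ replaced by $g_j'\ge 0$ in reversed orientation — i.e. a polynomial increasing on $[-1,x_j]$ past $x_{j-1}$ — and I can set $\ttau_j:=1-(\text{reflected }\tau_{n-j})$. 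Then $\ttau_j'=-(\text{reflected }\tau_{n-j})'$, which is $\le 0$ \emph{everywhere}, in particular on $[-1,x_j]\cup[x_{j-1},1]$; the uniform estimate \ineq{newtauj} and the derivative estimate \ineq{newderivatives} transfer verbatim from \ineq{tauj} and \ineq{derivatives} under the reflection, since $\delta_n$ is even and $\psi_j$ maps to $\psi_{n-j}$ of the reflected knots, with the same modulus. This is essentially a one-line construction once one observes $\chi_j(x)=1-\chi_{[-1,x_j]}(-(-x))$ and that reflection is an automorphism of the Chebyshev partition sending $x_i\mapsto -x_{n-i}$.

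The main obstacle is bookkeeping rather than ideas: one has to check that $\ttau_j'\le 0$ on the \emph{union} $[-1,x_j]\cup[x_{j-1},1]$ and not on all of $[-1,1]$, and indeed the reflected construction gives $\ttau_j'\le0$ everywhere, which is stronger — but then I should double-check it is consistent with $\ttau_j(1)\approx\chi_j(1)=1$ and $\ttau_j(-1)\approx 0$, i.e. that a globally decreasing polynomial can interpolate the indicator well: it can, because $\chi_j$ itself is nonincreasing. Wait — $\chi_j$ is nondecreasing, not nonincreasing. So a \emph{globally} decreasing $\ttau_j$ cannot approximate $\chi_j$ to $o(1)$ on all of $[-1,1]$. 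This means the reflection trick as stated is wrong in sign, and the correct object is: $\ttau_j$ must be nondecreasing on $I_j$ (where $\chi_j$ jumps) but is only required to be nonincreasing off $I_j$. So I would instead take $\ttau_j:=\tau_j-\lambda_j\,\zeta_j$ where $\tau_j$ is the increasing polynomial from Lemma~\ref{lem5} and $\zeta_j$ is a nonnegative polynomial of degree $\le Cn$ that is large and increasing on $[-1,x_j]$, negligible ($\le C\delta_n^\alpha\psi_j^\beta$ together with its derivative) on $I_j\cup[x_{j-1},1]$, with $\lambda_j$ chosen so that $\lambda_j\zeta_j'(x)\ge\tau_j'(x)$ for $x\in[-1,x_j]$ and $\lambda_j\zeta_j$ stays $\le C\delta_n^\alpha\psi_j^\beta$ on $I_j\cup[x_{j-1},1]$; such a $\zeta_j$ is again manufactured from a high power of $t_{j}$ (or of a shifted kernel concentrated near $[-1,x_j]$) integrated as in \ineq{deftau}, and the competition between "$\lambda_j$ large enough to beat $\tau_j'$ on the left" and "$\lambda_j\zeta_j$ small enough on the right" is exactly where the gap between the exponent $30(\alpha+\beta)$ in \ineq{taudoubleprime} and the exponent $\beta$ in \ineq{derivatives}/\ineq{tauj} is consumed — this is the step I expect to require the most care, and it is why Lemma~\ref{lem5} was stated with two independent parameters $\alpha,\beta$ and such generous exponents. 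Finally, having $\ttau_j$ with $\ttau_j'\le 0$ on $[-1,x_j]$ and $\ttau_j'=\tau_j'-\lambda_j\zeta_j'\ge 0$ there would be contradictory, so let me restate: on $[-1,x_j]$ I want $\ttau_j'\le0$, achieved by $\lambda_j\zeta_j'\ge\tau_j'$; on $[x_{j-1},1]$ I want $\ttau_j'\le 0$, but there $\tau_j'>0$, so I additionally need $\zeta_j$ (or a second correction term symmetric to it, concentrated near $[x_{j-1},1]$ — by the reflection $x\mapsto-x$ applied to $\zeta_{n-j}$) to also dominate $\tau_j'$ on $[x_{j-1},1]$. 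So the honest construction is $\ttau_j:=\tau_j-\lambda_j(\zeta_j^{L}+\zeta_j^{R})$ with $\zeta_j^{L},\zeta_j^{R}$ the left- and right-concentrated correctors, $\lambda_j$ a single constant chosen via \ineq{taudoubleprime} to simultaneously satisfy $\lambda_j(\zeta_j^{L}+\zeta_j^{R})'\ge\tau_j'$ on $[-1,x_j]\cup[x_{j-1},1]$ and $\lambda_j(\zeta_j^{L}+\zeta_j^{R})\le C\delta_n^\alpha\psi_j^\beta$ on $I_j$ (plus the matching derivative bound), after which \ineq{decreasing}, \ineq{newderivatives}, \ineq{newtauj} all follow by the triangle inequality from the corresponding estimates for $\tau_j$, $\zeta_j^{L}$, $\zeta_j^{R}$. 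I would carry this out in that order: (1) recall $\tau_j$ and its estimates from Lemma~\ref{lem5} with inflated parameters; (2) build $\zeta_j^{L}$ by the integral formula \ineq{deftau} with a kernel of type \ineq{deftj} re-centered at the knots in $[-1,x_j]$, and $\zeta_j^{R}$ by reflection; (3) verify the two-sided size/derivative estimates for $\zeta_j^{L,R}$ exactly as \ineq{behtj} was used in Lemma~\ref{lem5}; (4) choose $\lambda_j$ and the exponents $\mu,\xi$ and conclude. The degree stays $\le Cn$ throughout since every ingredient does.
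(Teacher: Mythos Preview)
Your plan, as written, does not go through, and the mechanism you single out---the gap between the exponent $30(\alpha+\beta)$ in \ineq{taudoubleprime} and the exponent $\beta$ in \ineq{derivatives}---points the wrong way. To force $\ttau_j'\le 0$ on $[-1,x_j]\cup[x_{j-1},1]$ by subtraction you need the corrector derivative to dominate the \emph{upper} bound on $\tau_j'$, not the lower one. Near $x_j$ the two bounds coincide up to constants (since $\psi_j\sim 1$ there and $t_j^\mu(x)\sim h_j^{-2\mu}$ from \ineq{behtj}), so there is no slack to exploit. Worse, your description of $\zeta_j^{L}$ is internally inconsistent: you ask it to be ``large and increasing on $[-1,x_j]$'' yet ``negligible together with its derivative on $I_j\cup[x_{j-1},1]$''. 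If the derivative is negligible on $I_j$ then $\zeta_j^{L}$ cannot drop across $I_j$, so its value persists onto $[x_{j-1},1]$; combined with $\delta_n(\pm1)=0$ in \ineq{newtauj} (which forces $\lambda_j\zeta_j^{L}(\pm1)=0$) and $(\zeta_j^L)'\ge0$ on $[-1,x_j]$, this leaves no room. Any $\tau$-type polynomial ``re-centered at knots in $[-1,x_j]$'' has value $1$ at $x=1$, so subtracting a nonzero multiple of it already destroys $\ttau_j(1)=1$. In short, building a corrector $\Lambda=\lambda_j(\zeta_j^L+\zeta_j^R)$ with $\Lambda'\ge\tau_j'$ off $I_j$, $\Lambda(\pm1)=0$, and the decay \ineq{newtauj}/\ineq{newderivatives} is exactly as hard as building $\ttau_j$ itself (indeed $\Lambda=\tau_j-\ttau_j$ is the canonical solution).

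The paper avoids all of this with a direct construction: it sets
\[
\ttau_j(x):=\widetilde d_j^{-1}\int_{-1}^x (y-x_j)(x_{j-1}-y)\,(1-y^2)^\xi\, t_j^\mu(y)\,dy,
\]
that is, the same integral as in \ineq{deftau} with the single extra factor $(y-x_j)(x_{j-1}-y)$ inserted into the integrand. That factor is nonpositive precisely on $[-1,x_j]\cup[x_{j-1},1]$, so \ineq{decreasing} is immediate; after checking $\widetilde d_j\sim(1+x_{j-1})^\xi(1-x_j)^\xi h_j^{-2\mu+3}$, estimates \ineq{newderivatives} and \ineq{newtauj} follow by the same computation as in Lemma~\ref{lem5}, using \ineq{behtj} and \ineq{delta}. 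Note that this amounts to taking $\ttau_j'(x)=c\,h_j^{-2}(x-x_j)(x_{j-1}-x)\,\tau_j'(x)$: the ``corrector'' you are searching for is just a quadratic multiplier on the kernel, not an additive term.
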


\begin{proof}
We let
\[
\ttau_j(x) := \widetilde d_j^{-1} \int_{-1}^x (y-x_j)(x_{j-1}-y)(1-y^2)^\xi t_j^\mu(y)\, dy
\]
with $t_j$ defined in \ineq{deftj} and $\widetilde d_j$ is so chosen that $\ttau_j(1)=1$, and where $\xi$ and $\mu$ are sufficiently large and will be prescribed later.
Clearly, \ineq{decreasing} is satisfied.

It is possible to show (see, \eg, \cite{K-coconvex}*{Proposition 4} with $m=k=\xi+1$, $a_1=\dots=a_{m-1}=-1$, $b_1=\dots=b_{k-1}=1$, $a_m=x_j$, $b_k=x_{j-1}$) that
\[
\widetilde d_j \sim (1+x_{j-1})^\xi (1-x_j)^\xi h_j^{-2\mu+3} , \quad \mbox{\rm if }\; \mu \geq 10\xi+15 .
\]
Hence, using \ineq{behtj} we have
\begin{eqnarray*}
\left|\ttau'_j(x) \right| &=& \widetilde d_j^{-1} (1-x^2)^\xi |x-x_j| |x_{j-1}-x| |t_j^\mu(x)| \\
& \leq &
C \left( \frac{1-x^2}{ (1+x_{j-1})(1-x_j)} \right)^\xi h_j^{-1} \psi_j^{2\mu-2}(x) .
\end{eqnarray*}
We  note (cf. \cite{K-sim}*{(25)}) that, for all $x\in [-1,1]$,
\[
\frac{1+x }{1+x_{j-1}} \leq c \psi_j^{-1}(x) \andd \frac{1-x}{ 1-x_{j}} \leq c \psi_j^{-1}(x) .
\]
Now, if $x < x_j$, then
\begin{eqnarray*}
\lefteqn{ |\chi_j(x)-\ttau_j(x)|   =   |\ttau_j(x)| =  \left| \int_{-1}^x  \ttau_j'(y)  dy \right| } \\
& \leq &
C  h_j^{-1} \int_{-1}^x \left( \frac{1+y}{  1+x_{j-1} } \right)^\xi \left( \frac{h_j }{ |y-x_j|+h_j} \right)^{2\mu-\xi-2} dy \\
& \leq &
C \left( \frac{1+x  }{  1+x_{j-1} } \right)^\xi h_j^{2\mu-\xi-3} \int_{-\infty}^x (x_j-y + h_j)^{-2\mu+\xi+2} dy \\
& \leq &
C \left( \frac{1-x^2}{ (1+x_{j-1})(1-x_j)} \right)^\xi \psi_j^{2\mu-\xi-3} .
\end{eqnarray*}
Similarly, for $x\geq x_j$, we write
\begin{eqnarray*}
\lefteqn{ |\chi_j(x)-\ttau_j(x)|   =   |1-\ttau_j(x)| =  \left| \int_{x}^1 \ttau_j'(y)  dy \right| } \\
& \leq &
C  h_j^{-1} \int_{x}^1 \left( \frac{1-y }{  1-x_{j} } \right)^\xi \left( \frac{h_j }{ |y-x_j|+h_j} \right)^{2\mu-\xi-2} dy \\
& \leq &
C \left( \frac{1-x }{  1-x_{j} } \right)^\xi  h_j^{2\mu-\xi-3} \int_{x}^\infty (y-x_j + h_j)^{-2\mu+\xi+2} dy \\
& \leq &
C \left( \frac{1-x^2 }{ (1+x_{j-1})(1-x_j)} \right)^\xi \psi_j^{2\mu-\xi-3} .
\end{eqnarray*}
 Finally, using \ineq{delta}, we conclude that
 \[
 \left|\ttau'_j(x) \right| \leq C \delta_n^{2\xi} h_j^{-1} \psi_j^{2\mu-2\xi-2}(x)
 \]
and
\[
|\chi_j(x)-\ttau_j(x)| \leq C \delta_n^{2\xi} h_j^{-1} \psi_j^{2\mu-3\xi-3}(x) ,
\]
and it is enough to set $\xi := \lceil \alpha/2 \rceil $ and $\mu := \lceil \beta + 5\alpha \rceil + 25$ in order to complete the proof.
\end{proof}

\sect{Auxiliary results on properties of piecewise polynomials} \label{section5}

All constants $c$ in this section depend only on $k$.

 The following lemma is valid (compare with \cite{DLS}*{Lemma 1.4}).

\begin{lemma}\label{b_k<c}
Let $k\in\N$, $\phi\in\Phi^k$, $f\in C[-1,1]$ and $S\in\Sigma_{k,n}$. If
\[
\omega_k(f,t)\le\phi(t)
\]
and
\be\label{piecewise}
|f(x)-S(x)|\le \phi(\rho_n(x)),\quad x\in[-1,1],
\ee
then
\[
b_k(S, \phi)\le c.
\]
\end{lemma}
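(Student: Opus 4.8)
The plan is to estimate $b_{i,j}(S,\phi)$ for an arbitrary pair $1\le i,j\le n$ and show the bound is $\le c$ uniformly. Fix such $i,j$ and recall $b_{i,j}(S,\phi)=\frac{\|p_i-p_j\|_{I_i}}{\phi(h_j)}\left(\frac{h_j}{h_{i,j}}\right)^k$. The polynomial $q:=p_i-p_j$ has degree $\le k-1$, so by a Markov-type / Chebyshev norming argument on the interval $I_i$ versus the larger interval $I_{i,j}$ that contains it, one has $\|q\|_{I_i}\left(h_j/h_{i,j}\right)^k \le c\,\|q\|_{I_i}\left(h_i/h_{i,j}\right)^k \le c\,\|q\|_{I_{i,j}}^{\,*}$ in a suitable sense — more precisely, the factor $(h_j/h_{i,j})^k$ is exactly what compensates for the growth of a degree-$(k-1)$ polynomial when passing from $I_i$ to $I_{i,j}$, so it suffices to control $\|p_i-p_j\|$ at one well-chosen point of $I_j$ (the far end of $I_{i,j}$ from $I_i$) by $c\,\phi(h_j)$.

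First I would reduce, via \ineq{rho} ($h_j\sim\rho_n(x)$ on $I_j$, and neighboring $h$'s are comparable), to comparing the two polynomial pieces through a chain of intervals. The key inequality is: for a polynomial $p$ of degree $\le k-1$ and nested intervals $J\subset J'$, $\|p\|_{J'}\le (c\,|J'|/|J|)^{k-1}\|p\|_{J}$; applied with $J=I_i$, $J'=I_{i,j}$ this lets us replace $\|p_i-p_j\|_{I_i}(h_j/h_{i,j})^k$ by something like $c\,h_j\,|{\rm(divided\ difference\ data)}|$, but the cleaner route is: pick $y\in I_i$ and $z\in I_j$; then for $x\in I_i$, $|p_i(x)-p_j(x)|\le |p_i(x)-f(x)|+|f(x)-f(z)|+|f(z)-p_j(z)|+|p_j(z)-p_j(x)|$. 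The first and third terms are $\le\phi(\rho_n(\cdot))\le c\,\phi(h_j)$ by \ineq{piecewise} and $h_i\sim h_j$ along the chain (handled by summing $k$-majorant bounds across the at most $|i-j|+1$ intervals, using that $\phi\in\Phi^k$ is nondecreasing with $t^{-k}\phi(t)$ nonincreasing, together with $h_{i,j}\le c\,|i-j|\max(h_i,h_j)$ or a telescoping estimate).

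The heart of the matter is the middle term $|f(x)-f(z)|$ for $x\in I_i$, $z\in I_j$, i.e. over a span of length $\sim h_{i,j}$: this is bounded using only $\omega_k(f,t)\le\phi(t)$ via the standard estimate $|f(x)-f(z)|\le c\,(1+|x-z|/t)^{k}\,\omega_k(f,t)$ for any $t>0$; choosing $t=h_j$ gives $|f(x)-f(z)|\le c\,(h_{i,j}/h_j)^{k}\,\phi(h_j)$. Multiplying through by $(h_j/h_{i,j})^k$ and dividing by $\phi(h_j)$ turns this term into $\le c$. For the term $|p_j(z)-p_j(x)|$ I would invoke the degree-$(k-1)$ polynomial growth estimate to replace it by $c\,(h_{i,j}/h_j)^{k-1}\|p_j\|_{I_j}^{\rm osc}$, but in fact the simplest packaging is to bound $\|p_j\|_{I_j}$-oscillation directly: $p_j$ interpolates $f$-like data, so by \ineq{piecewise} and $\omega_k(f,\cdot)\le\phi$, the oscillation of $p_j$ on $I_j$ is $\le c\,\phi(h_j)$ (a degree-$(k-1)$ polynomial that is within $\phi(h_j)$ of a function with $\omega_k\le\phi$ on an interval of length $\sim h_j$ has oscillation $\le c\,\phi(h_j)$ — a local Whitney-type bound), and then its growth to $I_{i,j}$ contributes the $(h_{i,j}/h_j)^{k-1}$ factor, again killed by the $(h_j/h_{i,j})^k\le (h_j/h_{i,j})^{k-1}$ prefactor.

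Assembling: $\|p_i-p_j\|_{I_i}(h_j/h_{i,j})^k \le c\,\phi(h_j)$, hence $b_{i,j}(S,\phi)\le c$, and taking the max over $i,j$ gives $b_k(S,\phi)\le c$. The main obstacle I anticipate is the bookkeeping for the middle term — getting the clean $(1+|x-z|/t)^k$ modulus estimate and making sure the compensating power in \ineq{bij} is exactly $k$ and not $k-1$ or $k+1$; one must be careful that the polynomial-growth step costs only $(h_{i,j}/h_j)^{k-1}$ (degree $k-1$) while the modulus step costs $(h_{i,j}/h_j)^{k}$, and that the definition's factor $(h_j/h_{i,j})^k$ absorbs the larger of the two. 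I would model the chain-of-intervals estimate on \cite{DLS}*{Lemma 1.4} as the excerpt suggests.
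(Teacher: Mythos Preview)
Your plan has a genuine gap at what you call the ``heart of the matter''. The estimate you invoke as standard,
\[
|f(x)-f(z)|\le c\,(1+|x-z|/t)^{k}\,\omega_k(f,t),
\]
is false: take any nonconstant polynomial $f$ of degree $\le k-1$, for which $\omega_k(f,t)\equiv 0$ while $|f(x)-f(z)|\ne 0$. The same counterexample breaks your claimed bound on the oscillation of $p_j$ on $I_j$: if $f$ is linear, $k\ge 2$, and $S=f$, then $p_j$ is linear and its oscillation on $I_j$ is $\sim h_j$, which need not be controlled by $\phi(h_j)$. In your four-term decomposition the second and fourth terms can each be individually large; only their \emph{combination} is controlled, because $f$ and $p_j$ move together away from $I_j$.

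The fix --- and this is precisely what the paper does --- is to avoid separating $f$ from $p_j$. Split instead as
\[
\|p_i-p_j\|_{I_i}\le\|p_i-f\|_{I_i}+\|f-p_j\|_{I_i}.
\]
The first term is handled exactly as you handle your term~1 (using $\phi\in\Phi^k$). For the second, set $g:=f-p_j$; since $\deg p_j\le k-1$ one has $\omega_k(g,\cdot)=\omega_k(f,\cdot)\le\phi$, and by hypothesis $\|g\|_{I_j}\le\phi(h_j)$. The correct growth estimate (see \cite{DS}*{(6.17), p.~235}) needs \emph{both} pieces of information:
\[
|g(x)|\le c\left(1+\frac{|x-a|}{h}\right)^k\bigl(\omega_k(g,h)+\|g\|_{[a,a+(k-1)h]}\bigr),
\]
which with $a=x_j$ and $h\sim h_j$ yields $\|f-p_j\|_{I_i}\le c\,(h_{i,j}/h_j)^k\phi(h_j)$, and hence the second summand divided by $\phi(h_j)$ and multiplied by $(h_j/h_{i,j})^k$ is $\le c$.
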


\begin{proof}
Recall that $\phi$ is not identically zero, so that $\phi(x)>0$, $x>0$. For $1\le i,j\le n$, we have
\[
b_{i,j}(S,\phi)\le\frac{\|p_i-f\|_{I_i}}{\phi(h_j)}\left(\frac{h_j}{h_{i,j}}\right)^k+
\frac{\|f-p_j\|_{I_i}}{\phi(h_j)}\left(\frac{h_j}{h_{i,j}}\right)^k=:\sigma_1+\sigma_2.
\]
Now,  we note that, for any $1\leq \nu\leq n$, inequalities \ineq{piecewise} and \ineq{rho} imply
\[
\norm{p_\nu-f}{I_\nu} \le   \norm{ \phi(\rho_n)}{I_\nu} \le \phi(h_\nu) .
\]
Hence, $\sigma_1\le1$, where we used the fact that if $h_i\le h_j$, then $\phi(h_i)\le\phi(h_j)$, and if $h_i>h_j$, then
$\phi(h_i)/\phi(h_j) \leq  h^k_i/h^k_j$.

In order to estimate $\sigma_2$, we first recall the following estimate (see \cite{DS}*{(6.17), p. 235}). For any $g\in\C[-1,1]$, $k\in\N$, $a\in [-1,1]$ and $h>0$ such that $a+(k-1)h \in [-1,1]$,
\[
|g(x)| \leq c \left(1+\frac{|x-a|}{h} \right)^k \left( \w_k(g, h) + \norm{g}{[a, a+(k-1)h]} \right) , \quad x\in [-1,1] .
\]
Setting $g := f-p_j$, $a:=x_j$ and $h:=h_j/\max\{1,k-1\}$, and observing that $\w_k(g, h) = \w_k(f-p_j, h) = \w_k(f, h) \leq \phi(h)$, we get
\[
|f(x)-p_j(x)| \leq c \left(1+\frac{|x-x_j|}{h_j} \right)^k \left( \phi(h_j) + \norm{f-p_j}{I_j} \right) , \quad x\in [-1,1] ,
\]
and so
\[
\norm{f-p_j}{I_i} \leq c \left(\frac{h_{i,j}}{h_j}\right)^k \phi(h_j).
\]
Hence, $\sigma_2\le c$, and the proof is complete.
\end{proof}

The next lemma, although claims a different inequality than \cite{DLS}*{Lemma 2.1}, is proved along the same lines. We bring its proof for the sake of completeness.

\begin{lemma}\label{important}
Let $k\in\N$, $\phi\in\Phi^k$ and $S\in\Sigma_{k,n}\cap\C[-1,1]$. Then
\be \label{inlem5}
b_k(S,\phi)\le\,c \norm{ \frac{\rho_n S'}{\phi(\rho_n)} }{\infty}.
\ee
\end{lemma}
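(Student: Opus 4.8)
The statement bounds $b_k(S,\phi)$ for a \emph{continuous} piecewise polynomial $S\in\Sigma_{k,n}$ by $c\norm{\rho_n S'/\phi(\rho_n)}{\infty}$. My plan is to reduce this to Lemma~\ref{b_k<c} by constructing an auxiliary continuous function $f$ with the two properties required there: a modulus-of-smoothness bound $\omega_k(f,t)\le\phi(t)$ and a pointwise approximation bound $|f(x)-S(x)|\le\phi(\rho_n(x))$. The natural candidate is simply $f:=S$ itself — since $S$ is continuous, $|S(x)-S(x)|=0\le\phi(\rho_n(x))$ holds trivially, so \ineq{piecewise} is free. The entire content is therefore to show that $\omega_k(S,t)\le c\,M\,\phi(t)$ where $M:=\norm{\rho_n S'/\phi(\rho_n)}{\infty}$; then applying Lemma~\ref{b_k<c} to $f=S$ and the majorant $c M\phi\in\Phi^k$ (note $cM\phi$ is again a $k$-majorant since $\Phi^k$ is a cone) yields $b_k(S,cM\phi)\le c$, and since $b_k$ scales linearly in $1/\phi$, i.e. $b_k(S,cM\phi)=b_k(S,\phi)/(cM)$, we get $b_k(S,\phi)\le cM$, which is exactly \ineq{inlem5}. (If $M=\infty$ there is nothing to prove; if $M=0$ then $S'\equiv0$ so $S$ is constant and $b_k=0$.)

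So the crux is the estimate $\omega_k(S,t)\le cM\phi(t)$. First I would record what $M$ gives pointwise: $|S'(x)|\le M\phi(\rho_n(x))/\rho_n(x)$ for a.e.\ $x$, and since $S$ is piecewise polynomial and continuous, $S'$ exists on each open $I_j$ and $S$ is the integral of $S'$. To bound $\Delta_u^k(S,x)$ for $0<u\le t$ with $x\pm(k/2)u\in[-1,1]$, the standard route is the Whitney-type/integral-remainder representation of the $k$-th finite difference: $\Delta_u^k(S,x)=\int_{[0,u]^k}S^{(k)}(x-(k/2)u+s_1+\cdots+s_k)\,ds_1\cdots ds_k$ whenever $S$ is smooth enough on the relevant segment. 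Since $S$ is only piecewise in $\Pi_{k-1}$, $S^{(k)}$ is a sum of Dirac masses at the knots $x_j$, so this must be handled with a little care — but a cleaner approach avoiding $S^{(k)}$ is to use that $\omega_k(g,t)\le c\,t^{k-1}\,\omega_1(g^{(k-1)},t)$-type reductions, or better, to use directly the known fact (e.g.\ \cite{DS}) that for any absolutely continuous $g$, $\omega_k(g,t)\le c\,t\,\omega_{k-1}(g',t)$, iterating down to $\omega_1$, and then bounding $\omega_1$ of lower derivatives of $S$ by the integral of $|S'|$ against the weight $\phi(\rho_n)/\rho_n$.

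The technically delicate point — and what I expect to be the main obstacle — is that the weight $\phi(\rho_n(x))/\rho_n(x)$ blows up near $\pm1$ like $\varphi(x)^{-1}$ up to the $\phi$ factor (since $\rho_n(x)\sim\varphi(x)/n$ in the interior but $\rho_n\sim n^{-2}$ near the endpoints), so one cannot just crudely bound $\int|S'|$ by $\|S'\|_\infty\cdot(\text{length})$; one must exploit the matching between the local step size $h_j$ and $\rho_n$ on $I_j$ via \ineq{rho}, namely $\varphi(x)/n<\rho_n(x)<h_j<5\rho_n(x)$ on $I_j$, together with the monotonicity property $t^{-k}\phi(t)\downarrow$ of the majorant to absorb the weight. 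Concretely, for an increment of length $u$ near a point $x$, the difference $\Delta_u^k(S,x)$ only sees knots within distance $\sim ku$ of $x$, on which $\rho_n$ is comparable to $\rho_n(x)$ and hence to $u$ when $u\gtrsim\rho_n(x)$ (the case $u\lesssim\rho_n(x)$ being handled since then the whole increment lies in at most a couple of adjacent $I_j$'s where $S$ is a single polynomial and a direct Markov-type / Taylor estimate applies). Then $\int_{|y-x|\le cku}|S'(y)|\,dy\le M\int_{|y-x|\le cku}\frac{\phi(\rho_n(y))}{\rho_n(y)}\,dy\le cM\,u\cdot\frac{\phi(u)}{u}=cM\phi(u)\le cM\phi(t)$, using $\phi(\rho_n(y))/\rho_n(y)\le c\,\phi(u)/u$ which follows from $\rho_n(y)\sim u$ and, when $\rho_n(y)<u$, from $\phi(s)/s$ being... (careful: $\phi$ itself is only nondecreasing, not $\phi(t)/t$, but $\phi(t)/t^k$ is nonincreasing, so for $k\ge2$ this needs the combination $\phi(\rho_n(y))/\rho_n(y)=\rho_n(y)^{k-1}\cdot\phi(\rho_n(y))/\rho_n(y)^k\le u^{k-1}\phi(u)/u^k=\phi(u)/u$ when $\rho_n(y)\le u$). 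Iterating this bound appropriately through the $k$ integrations — or, equivalently, applying it once to the representation of $\Delta_u^k(S,x)$ as a $k$-fold average of $S'$-increments — gives $\omega_k(S,t)\le cM\phi(t)$, completing the argument.
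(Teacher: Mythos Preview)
Your reduction to Lemma~\ref{b_k<c} via $f:=S$ hinges on the claim $\omega_k(S,t)\le cM\phi(t)$ for all $t>0$, and this claim is false. Take $k=2$, $\phi(t)=t^2$, and any continuous piecewise linear $S$ with slope $c_j$ on $I_j$ satisfying $|c_j|\le Mh_j$ (so that $M=\norm{\rho_n S'/\phi(\rho_n)}{\infty}$ is finite). At a knot $x_j$ and for small $u$, $\Delta_u^2(S,x_j)=(c_j-c_{j+1})u$, which is of order $u$, not $u^2$; hence $\omega_2(S,t)\not\le cMt^2$ as $t\to0$. More generally, in your ``$u\lesssim\rho_n(x)$'' case the increment lies in two adjacent $I_j$'s, where $S$ is \emph{not} a single polynomial; a careful Taylor estimate there gives only $|\Delta_u^k(S,x)|\le cM\phi(h_j)(u/h_j)$, and since $\phi\in\Phi^k$ only guarantees $\phi(u)\ge\phi(h_j)(u/h_j)^k$, this falls short of $cM\phi(u)$ by a factor $(h_j/u)^{k-1}$. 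Because $\omega_k$ is a \emph{global} supremum over $x$, this bad local behaviour near knots with large $h_j$ contaminates $\omega_k(S,t)$ at every small $t$, so even the specific values $t\sim h_1/(k-1)$ needed in the proof of Lemma~\ref{b_k<c} are out of reach. (There is a second slip: your ``careful'' inequality $\rho_n(y)^{k-1}\cdot\phi(\rho_n(y))/\rho_n(y)^k\le u^{k-1}\phi(u)/u^k$ for $\rho_n(y)\le u$ does not follow, since $\phi(t)/t^k$ nonincreasing gives the reverse inequality on the second factor; for $\phi(t)=t^\alpha$ with $\alpha<1$ the conclusion $\phi(\rho_n(y))/\rho_n(y)\le\phi(u)/u$ is simply false.)

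The paper's proof avoids any global modulus by working directly with the definition of $b_{i,j}(S,\phi)$: continuity of $S$ gives $p_j(x)-p_i(x)=\int_{x_i}^{x_j}S'+\int_{x_j}^x p_j'-\int_{x_i}^x p_i'$, after which $\|p_j-p_i\|_{I_i}$ is bounded by $h_{i,j}\|S'\|_{I_{i,j}}$ plus $h_{i,j}\|p_j'\|_{I_{i,j}}$. The first is handled by $|S'(u)|\le\phi(\rho_n(u))/\rho_n(u)$ and the Chebyshev geometry $h_j^2\le ch_{i,j}\rho_n(u)$ together with $\phi\in\Phi^k$; the second uses that $p_j$ is a polynomial of degree $\le k-1$, so $\|p_j'\|_{I_{i,j}}\le c(h_{i,j}/h_j)^{k-2}\|p_j'\|_{I_j}$ by Markov-type extrapolation. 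This purely local argument is what you should replace your modulus-of-smoothness step with.
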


\begin{proof}
We note that in the case $k=1$, the statement of the lemma is trivial since  $\Sigma_{1,n}\cap\C[-1,1]=\Poly_0$, and so both sides of \ineq{inlem5} are identically zero.
Hence, we assume that $k\geq 2$, and we may also assume that
\be\label{bound}
\norm{\frac{\rho_n S'}{\phi(\rho_n )}}{\infty}=1.
\ee
Since
\[
p_j(x)=S (-1)+\int_{-1}^{x_j}S'(u)du+\int_{x_j}^xp'_j(u)du,\quad 1\leq j \leq n ,
\]
it follows that
\[
p_j(x)-p_i(x)= \int_{x_i}^{x_j}S'(u)du+\int_{x_j}^xp'_j(u)du-\int_{x_i}^xp'_i(u)du,
\]
and hence,
\begin{eqnarray*}
\|p_j-p_i\|_{I_i} &\le& \int_{x_i}^{x_j}|S'(u)|du+\int_{I_{i,j}}|p'_j(u)|du+\int_{I_i}|p'_i(u)|du\\
& \le& 2h_{i,j}\|S'\|_{I_{i,j}}+h_{i,j}\|p'_j\|_{I_{i,j}} =: \sigma_1 +\sigma_2 .
\end{eqnarray*}
We first estimate $\sigma_2$.
If $v\in I_j$, then it follows by \ineq{bound} that
\[
|p'_j(v)|=|S'(v)|   \leq \frac{\phi(\rho_n(v))}{ \rho_n(v)}        \leq c\frac{\phi(h_j)}{h_j},
\]
and since $p_j$ is a polynomial of degree $\le k-1$, this, in turn, implies that
\be \label{AA}
\sigma_2 = h_{i,j}\|p'_j\|_{I_{i,j}} \leq c h_{i,j} \frac{\phi(h_j)}{h_j}\left(\frac{h_{i,j}}{h_j}\right)^{k-2}
\leq c \phi(h_j)\left(\frac{h_{i,j}}{h_j}\right)^k.
\ee

We now estimate $\sigma_1$. First, note that it follows from \ineq{rho1} (with $y:=x_j$ and any $u\in I_{i,j}$) that $h_j^2\le ch_{i,j}\rho_n(u)$. If
 $\rho_n(u)<h_j$,  this implies
\[
\frac{\phi(\rho_n(u))}{\rho_n(u)}\le c\frac{\phi(h_j)}{h_j^2}h_{i,j}\le c\frac{\phi(h_j)}{h_j^k}h_{i,j}^{k-1},\quad u\in I_{i,j}.
\]
If $\rho_n(u)\ge h_j$, then
$$
\frac{\phi(\rho_n(u))}{\rho_n(u)}\le\frac{\phi(h_j)}{h_j^k}\rho_n^{k-1}(u)\le\frac{\phi(h_j)}{h_j^k}h_{i,j}^{k-1},\quad u\in I_{i,j},
$$
and so using \ineq{bound} again we have
\[
\sigma_1 = 2 h_{i,j}\|S'\|_{I_{i,j}}\le 2 h_{i,j}\left\|\frac{\phi(\rho_n)}{\rho_n}\right\|_{I_{i,j}}
\le c\frac{\phi(h_j)}{h_j^k}h_{i,j}^{k} .
\]
Combining this with \ineq{AA}, we obtain
\[
\|p_j-p_i\|_{I_i}\le c\,\phi(h_j) \left( \frac{h_{i,j} }{h_j }\right)^k,
\]
and the proof  is complete.
\end{proof}

\sect{Monotone polynomial approximation of   piecewise polynomials with  ``small'' derivatives} \label{section6}

All constants $C$ in this section may depend on $k$ and $\alpha$.

\begin{lemma} \label{stepeleven}
Let $\alpha>0$, $k\in\N$
and $\phi\in\Phi^k$,
be given. If $S\in\Sigma_{k,n} \cap\Delta^{(1)}$    is such that
\be\label{approx2}
|S'(x)|\le
\frac{\phi(\rho_n(x))}{\rho_n(x)},\quad x\in [x_{n-1},x_1]\setminus
\{x_j\}_{j=1}^{n-1},
\ee
\be \label{jumps}
0\leq S(x_j+) - S(x_j-) \leq \phi(\rho_n(x_j)), \quad 1\leq j \leq n-1 ,
\ee
and
\be\label{approx3}
S'(x)=0, \quad x\in [-1, x_{n-1}) \cup (x_1, 1],
\ee
then there is a polynomial $P \in\Delta^{(1)}\cap\Poly_{Cn}$   such that
\be\label{approx1}
|S(x)-P (x)|\le C\delta_n^\alpha(x) \,\phi\left(\rho_n(x)\right),\quad x\in [-1,1].
\ee
\end{lemma}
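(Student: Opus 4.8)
The plan is to build $P$ as a sum of building blocks, one per knot, each block approximating the ``corrected jump'' of $S$ at that knot while contributing a monotone (nondecreasing) derivative. The natural decomposition is to write
\[
S(x) = S(-1) + \int_{-1}^x S'(u)\,du + \sum_{j=1}^{n-1} \big(S(x_j+)-S(x_j-)\big)\,\chi_j(x),
\]
so that $S$ is the sum of a smooth monotone part (the integral term, whose derivative is $S'\ge 0$ on each $I_\nu$ — wait, $S\in\Delta^{(1)}$ only guarantees $S$ nondecreasing, not $p_j'\ge0$; so one must be a little careful and instead keep $S'$ as a signed $L^\infty$ function bounded by $\phi(\rho_n)/\rho_n$ via \ineq{approx2}) plus the jump part $\sum_j a_j\chi_j$ with $0\le a_j\le \phi(\rho_n(x_j))$ by \ineq{jumps}. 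For the jump part I would replace each $\chi_j$ by the polynomial $\tau_j$ from \lem{lem5} (with exponents $\alpha,\beta$ chosen large enough, e.g. $\beta$ a fixed large multiple of $\alpha$ plus $k$), so that $a_j\tau_j$ is \emph{increasing} everywhere (that is the whole point of \ineq{taudoubleprime}: $\tau_j'>0$), its error $|a_j(\chi_j-\tau_j)|\le C a_j\delta_n^\alpha\psi_j^\beta\le C\phi(\rho_n(x_j))\delta_n^\alpha\psi_j^\beta$, and by \ineq{hjrho} together with the summability estimate \ineq{sumest}–\ineq{sumpsi} the sum over $j$ of these errors is $\le C\delta_n^\alpha(x)\phi(\rho_n(x))$, as required by \ineq{approx1}. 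The crucial bookkeeping is that $\phi\in\Phi^k$, so $\phi(\rho_n(x_j))\le C\psi_j^{-k}(x)\phi(\rho_n(x))$, and one needs $\beta$ large enough to absorb this loss and still have $\sum_j\psi_j^{\beta-k}<\infty$.

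For the integral (continuous) part $\sigma(x):=S(-1)+\int_{-1}^x S'$, which lies in $\Sigma_{k,n}\cap\Delta^{(1)}\cap C[-1,1]$ and satisfies $|\sigma'|\le \phi(\rho_n)/\rho_n$ a.e., I would invoke \lem{important} to get $b_k(\sigma,\phi)\le c$, and then apply a standard ``good monotone polynomial approximation of a continuous piecewise polynomial with bounded $b_k$'' result — essentially the mechanism behind the results of Leviatan–Shevchuk quoted in the introduction — to produce $Q\in\Poly_{Cn}\cap\Delta^{(1)}$ with $|\sigma(x)-Q(x)|\le C\delta_n^\alpha(x)\phi(\rho_n(x))$. (Here the $\delta_n^\alpha$ factor, which forces interpolation at $\pm1$, comes for free from the construction near the endpoints, using \ineq{approx3} to the effect that $\sigma$ is constant on the two outermost subintervals, so that near $\pm1$ there is genuinely nothing to approximate beyond a constant.) Then $P:=Q+\sum_{j=1}^{n-1}a_j\tau_j$ is a polynomial of degree $\le Cn$, it is nondecreasing because $Q'\ge0$ and each $a_j\tau_j'\ge0$, and the two error bounds add up to \ineq{approx1}.

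The step I expect to be the main obstacle is obtaining the monotone polynomial $Q$ for the continuous part $\sigma$ \emph{with the endpoint-vanishing factor $\delta_n^\alpha$} rather than merely with $\phi(\rho_n)$: the classical monotone approximation machinery gives the estimate with $\phi(\rho_n(x))$ (no $\delta_n$), and squeezing out the extra $\delta_n^\alpha(x)$ near $\pm1$ is exactly what makes these estimates ``interpolatory''. This is presumably handled by exploiting \ineq{approx3} — $\sigma$ being constant on $[-1,x_{n-1})$ and on $(x_1,1]$ — so that one only needs to approximate a function supported away from the endpoints, and then multiply by, or convolve against, a polynomial that is $\approx 1$ on $[x_{n-1},x_1]$ and decays like $\delta_n^\alpha$ near $\pm1$; controlling monotonicity under such a multiplication is the delicate point and will require the full strength of the auxiliary lemmas on $\tau_j$, $\ttau_j$ and on the Chebyshev partition collected in Sections~\ref{aux}–\ref{section5}. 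A secondary technical nuisance is verifying that the chosen exponents $\alpha,\beta$ in \lem{lem5} are simultaneously large enough for all the competing constraints (monotonicity of $a_j\tau_j$, summability of the errors, and absorption of the $\psi_j^{-k}$ loss from $\phi\in\Phi^k$); this is routine but must be done explicitly.
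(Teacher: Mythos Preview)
Your treatment of the jump part $\sum_j a_j\chi_j$ is correct and is exactly what the paper does: replace each $\chi_j$ by the monotone polynomial $\tau_j$ of \lem{lem5} (the paper takes $\beta=k+2$), use $\phi(h_j)\le C\psi_j^{-k}(x)\phi(\rho_n(x))$ via \ineq{hjrho}, and sum using \ineq{sumpsi}.

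The gap is in your handling of the continuous part $\sigma(x)=S(-1)+\int_{-1}^x S'$. You propose to invoke a ``standard monotone polynomial approximation of a continuous piecewise polynomial with bounded $b_k$'' to obtain $Q\in\Delta^{(1)}$ with $|\sigma-Q|\le C\delta_n^\alpha\phi(\rho_n)$. No such result is available with the interpolatory factor $\delta_n^\alpha$; the first place in this paper where such an estimate is established is \thm{step111}, and its proof \emph{uses} \lem{stepeleven} as a building block. So as written that step is circular, and you correctly flag it as the main obstacle.

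The paper sidesteps this obstacle by not separating off the integral part at all. It discretizes the whole of $S$ to the step function $S_1$ defined by $S_1\equiv S(x_j)$ on $[x_j,x_{j-1})$, so that
\[
S_1(x)=S(-1)+\sum_{j=1}^{n-1}\bigl(S(x_j)-S(x_{j+1})\bigr)\chi_j(x),
\]
with coefficients $S(x_j)-S(x_{j+1})\ge 0$ (since $S\in\Delta^{(1)}$) and $\le C\phi(h_j)$ (by \ineq{approx2} and \ineq{jumps}). Then $P:=S(-1)+\sum_j\bigl(S(x_j)-S(x_{j+1})\bigr)\tau_j$ is nondecreasing, and $|S_1-P|\le C\delta_n^\alpha\phi(\rho_n)$ follows by your own jump-part argument. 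The observation you are missing is that the discretization error $|S-S_1|$ already carries the factor $\delta_n^\alpha$ for free: on $[x_{n-1},x_1]$ one has $\delta_n\equiv 1$, so $|S-S_1|\le c\phi(\rho_n)=c\,\delta_n^\alpha\phi(\rho_n)$ there, while on $I_1\cup I_n$ hypothesis \ineq{approx3} forces $S\equiv S_1$. Thus no separate monotone approximation of $\sigma$ is needed.
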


Note that, clearly, condition \ineq{jumps} is automatically satisfied at all knots $x_j$ where   $S$ is continuous.

\begin{proof}
Let
\[
S_1(x):=\begin{cases} S(x_j),&\quad x\in[x_j,x_{j-1}),\quad 2\le j\le n,\\
S(x_1),&\quad x\in[x_1,1].
\end{cases}
\]
 Clearly, \ineq{approx2} and \ineq{jumps} imply
\be\label{approx4}
|S(x)-S_1(x)|\le c\phi(\rho_n(x)),\quad x\in[-1,1],
\ee
and \ineq{approx3} yields (recall that $S$ is right continuous)
\be\label{approx5}
S_1(x)=S(x),\quad x\in I_1\cup I_n.
\ee
We may write,
\begin{eqnarray*}
S_1(x) &=& \sum_{j=2}^nS(x_j)\bigl(\chi_j(x)-\chi_{j-1}(x)\bigr) +S(x_1)\chi_1(x) \\
& = & S(-1)+\sum_{j=1}^{n-1}\bigl(S(x_j)-S(x_{j+1})\bigr)\chi_j(x)  ,\quad x\in[-1,1] .
\end{eqnarray*}
Let
\[
P(x):=S(-1)+\sum_{j=1}^{n-1}\bigl(S(x_j)-S(x_{j+1})\bigr)\tau_j(x)    ,\quad x\in[-1,1],
\]
where $\tau_j$ are the polynomials from   \lem{lem5} with the same $\alpha$ and $\beta=k+2$.

Then, $P$  is a nondecreasing polynomial of degree $\le Cn$ and, in view of \ineq{approx4} and \ineq{approx5}, we only need to estimate $|S_1(x)-P(x)|$.
First, note that \ineq{hjrho} implies, for all $1\leq j\leq n$ and $x\in [-1,1]$,
\[
\phi(h_j) \leq \phi\left( c\psi_j^{-1}(x)\rho_n(x) \right) \leq C\,\psi_j^{-k} \phi\left(  \rho_n(x) \right).
\]
Now, since \ineq{approx2}  and \ineq{jumps} imply that
\[
|S(x_j)-S(x_{j+1})|\le C\phi(h_j),\quad 1\le j\le n-1,
\]
using \ineq{tauj}, we conclude that,  for all $1\leq j\leq n-1$ and $x\in [-1,1]$,
\begin{eqnarray*}
|S(x_j)-S(x_{j+1})||\chi_j(x)-\tau_j(x)| &\le&  C \phi(h_j)\delta_n^{\alpha}(x)\psi^{k+2}_j(x) \\
& \leq &
C \phi \left(  \rho_n(x) \right) \delta_n^{\alpha}(x)\psi_j^{2}(x) .
\end{eqnarray*}
Therefore, by \ineq{sumpsi}, we have
\begin{eqnarray*}
|S_1(x)-P(x)|&\le & \sum_{j=1}^{n-1}|S(x_j)-S(x_{j+1})||\chi_j(x)-\tau_j(x)|\\
& \leq &
C \phi \left(  \rho_n(x) \right)   \delta_n^{\alpha}(x)  \sum_{j=1}^{n-1}\psi_j^{2}(x) \\
& \leq &
C  \phi \left(  \rho_n(x) \right)   \delta_n^{\alpha}(x) .
\end{eqnarray*}
Combined with \ineq{approx4} and \ineq{approx5}, our proof is complete.
\end{proof}

\sect{On one partition of unity} \label{section7}

\begin{lemma}\label{lem4.4}
Let $\alpha_1,\beta_1>0$, and let $n,n_1\in\N$, $n_1 > n$,  be such that $n_1$ is divisible by $n$.
Then, there is a collection $\{\widetilde T_{j,n_1}\}_{j=1}^n$ of polynomials $\widetilde T_{j,n_1}\in\Poly_{C(\alpha_1,\beta_1) n_1}$, such that
the following relations hold:
\be\label{7.19}
\sum_{j=1}^n\widetilde T_{j,n_1}(x)\equiv1,\quad x\in[-1,1],
\ee
\be\label{7.20}
\widetilde T_{1,n_1}'(x)\ge0\quad{\rm and}\quad\widetilde T_{n,n_1}'(x)\le0,\quad x\in[-1,1],
\ee
\be\label{7.21}
|\widetilde T_{j,n_1}(x)|\le C
\delta_{n_1}^{\alpha_1}(x)
\left(\frac{\rho_{n_1}(x)}{\rho_{n_1}(x)+\dist (x,I_j)}\right)^{\beta_1},
\ee
for all
\[
x\in \DD_j := \begin{cases} [-1,x_1],&\quad\text{if}\quad j=1,\\
[x_{n-1},1],&\quad\text{if}\quad j=n,\\
[-1,1],&\quad\text{if}\quad 2\leq j \leq n-1,\\
\end{cases}
\]
and
\begin{eqnarray}  \label{7.21q}
|\widetilde T_{j,n_1}^{(q)}(x)| &\le &  C \frac{\delta_{n_1}^{\alpha_1}(x)}{\rho^q_{n_1}(x)}
\left(\frac{\rho_{n_1}(x)}{\rho_{n_1}(x)+\dist (x,I_j)}\right)^{\beta_1},  \\ \nonumber
&&  \qquad  1\leq q\leq \alpha_1 \;\; \text{and} \;\;  x\in [-1,1] ,
\end{eqnarray}
where all constants $C$ depend only on $\alpha_1$, $\beta_1$ and are independent of the ratio $n_1/n$.
\end{lemma}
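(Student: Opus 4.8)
The plan is to build the partition of unity directly from the polynomials $\ttau_j$ of \lem{lemnew5}, applied with parameter $n_1$ in place of $n$. Recall that $\ttau_{j,n_1}$ approximates the indicator $\chi_{[x_{j,n_1},1]}$; however, here we want a partition indexed by the \emph{coarse} Chebyshev knots $x_{i,n}$, so I would select the subcollection corresponding to the knots of $X_n$ that are also knots of $X_{n_1}$ (this is where the divisibility hypothesis $n\mid n_1$ is used: each $x_{i,n}$ equals some $x_{i',n_1}$). Writing $y_i := x_{i,n} = x_{i(n_1/n),\,n_1}$ for $0\le i\le n$, set $\widetilde T_{i,n_1} := \ttau_{y_{i-1}} - \ttau_{y_i}$ for $2\le i\le n-1$, with the two end pieces $\widetilde T_{1,n_1} := 1 - \ttau_{y_1}$ and $\widetilde T_{n,n_1} := \ttau_{y_{n-1}}$, where $\ttau_{y_i}$ denotes the polynomial of \lem{lemnew5} associated with the $X_{n_1}$-knot $y_i$, with parameters $\alpha = \alpha_1$ and $\beta$ chosen comfortably larger than $\beta_1$ (say $\beta := \beta_1 + 2$), so that the degree bound $C(\alpha_1,\beta_1)n_1$ holds. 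The telescoping identity \eqref{7.19} is then immediate, and \eqref{7.20} follows from \eqref{decreasing}: on $[-1,x_1]\supset[-1,y_1]$ the function $\ttau_{y_1}$ is nonincreasing, so $\widetilde T_{1,n_1}' = -\ttau_{y_1}' \ge 0$, and symmetrically for $\widetilde T_{n,n_1}$.

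The substantive part is the pointwise decay estimates \eqref{7.21} and \eqref{7.21q}. For an interior index $2\le i\le n-1$ and $x\in[-1,1]$, I would split according to the position of $x$ relative to $I_i$ (in the coarse partition $X_n$). If $x\in I_i$ or in an adjacent coarse interval, then $\dist(x,I_i)\le c\rho_{n_1}(x)$ up to constants that absorb the ratio — here I must be careful, since $\rho_{n_1}$ refers to the fine mesh; the bound $|\widetilde T_{i,n_1}(x)|\le 2$ together with $\delta_{n_1}^{\alpha_1}(x)\bigl(\rho_{n_1}/(\rho_{n_1}+\dist(x,I_i))\bigr)^{\beta_1}\ge c$ on such $x$ gives \eqref{7.21} trivially only when $x$ is within $O(\rho_{n_1})$ of $I_i$, which is \emph{not} the same as being in the coarse interval $I_i$. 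So instead, for $x$ to the left of $y_i$ I use $\widetilde T_{i,n_1}(x) = (\ttau_{y_{i-1}}(x) - \chi_{[y_{i-1},1]}(x)) - (\ttau_{y_i}(x) - \chi_{[y_i,1]}(x))$ (both $\chi$'s vanish there), and bound each difference by \eqref{newtauj} applied on the fine mesh: $|\ttau_{y_i}(x) - \chi_{[y_i,1]}(x)| \le C\delta_{n_1}^{\alpha_1}(x)\,\psi_{y_i}^{\beta}(x)$ where $\psi_{y_i}$ is the fine-mesh factor $|I'|/(|x-y_i|+|I'|)$ with $I'$ the fine interval with right endpoint $y_i$. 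Using \eqref{rho} on the fine mesh, $|I'|\sim\rho_{n_1}(y_i)$, and then \eqref{hjrho} and \eqref{distance1} (all with $n_1$) convert $\psi_{y_i}^\beta(x)$ into $\le C\bigl(\rho_{n_1}(x)/(\rho_{n_1}(x)+\dist(x,I_i))\bigr)^{\beta_1}$ after dropping $\beta-\beta_1$ worth of powers; the case $x$ to the right of $y_{i-1}$ is symmetric, and the case $x\in(y_i,y_{i-1})$, i.e. $x$ in the coarse interval $I_i$ itself, is handled by the trivial bound $|\widetilde T_{i,n_1}|\le 2$ combined with the fact that the claimed right-hand side is bounded below by a positive constant there. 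The endpoint indices $i\in\{1,n\}$ are the same computation with only one $\ttau$ present, and the restriction to $\DD_j$ is exactly what is needed for the one-sided cases (e.g. for $j=1$ we never need to estimate $\widetilde T_{1,n_1}$ to the right of $x_1$, where $1-\ttau_{y_1}$ is small but the weight $\delta_{n_1}$ would be too, making the inequality delicate).

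For the derivative bounds \eqref{7.21q}, I would simply differentiate: $\widetilde T_{i,n_1}^{(q)} = \ttau_{y_{i-1}}^{(q)} - \ttau_{y_i}^{(q)}$. \lem{lemnew5} as stated only gives a bound on $\ttau_j'$ (\eqref{newderivatives}), so I would either invoke the higher-derivative version implicit in its proof (the polynomials $\ttau_j$ have the same structure as the $\tau_j$ of \lem{lem5}, for which \eqref{derivatives} bounds all derivatives up to order $\alpha$), or — cleaner — use a Markov–Bernstein-type inequality on each fine interval together with \eqref{newtauj}: since $\ttau_{y_i} - \chi$ is small in sup-norm on a neighbourhood of size $\sim\rho_{n_1}(x)$ and $\ttau_{y_i}$ has degree $O(n_1)$, its $q$-th derivative at $x$ is $\le C\rho_{n_1}^{-q}(x)$ times that sup-norm, which is precisely \eqref{7.21q}. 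The main obstacle, and where I expect to spend the most care, is the two-mesh bookkeeping in the previous paragraph: keeping straight which $\psi$, $\rho$, $\delta$, and $h_j$ refer to $X_n$ versus $X_{n_1}$, and verifying that all the comparison constants produced by \eqref{rho}, \eqref{hjrho}, \eqref{distance1} are genuinely independent of the ratio $n_1/n$ (they are, because each of those inequalities holds for \emph{every} Chebyshev partition with an absolute constant, and the coarse knots $y_i$ are a subset of the fine knots so $\dist(x,I_i)$ in the coarse mesh differs from the corresponding fine-mesh distance to $y_i$ or $y_{i-1}$ by at most one fine interval, i.e. by $O(\rho_{n_1})$). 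Once that is in place, the powers $\beta-\beta_1 = 2$ of slack absorb every such constant, and the estimates follow.
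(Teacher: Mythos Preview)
There is a genuine gap in the argument for \eqref{7.20}. That inequality requires $\widetilde T_{1,n_1}'\ge 0$ and $\widetilde T_{n,n_1}'\le 0$ on \emph{all} of $[-1,1]$, not merely on $[-1,y_1]$ and $[y_{n-1},1]$. The polynomials $\ttau_j$ of \lem{lemnew5} satisfy $\ttau_j'\le 0$ only on $[-1,x_{j,n_1}]\cup[x_{j-1,n_1},1]$; on the fine interval $(x_{j,n_1},x_{j-1,n_1})$ one has $\ttau_j'(x)=\widetilde d_j^{-1}(x-x_{j,n_1})(x_{j-1,n_1}-x)(1-x^2)^\xi t_j^\mu(x)>0$. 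Hence with your construction $\widetilde T_{1,n_1}'=-\ttau_{y_1}'$ is strictly negative on the fine interval immediately to the right of $y_1=x_{1,n}$, and \eqref{7.20} fails. (Incidentally, your end pieces are also swapped: $\widetilde T_{1,n_1}$ should be the bump supported near $I_1=[x_1,1]$, i.e.\ approximately $\chi_{[y_1,1]}$, not $1-\chi_{[y_1,1]}$; with your labeling \eqref{7.21} on $\DD_1=[-1,x_1]$ would ask $|1-\ttau_{y_1}(x)|\lesssim\delta_{n_1}^{\alpha_1}(x)$ near $x=-1$, which is false.) The paper avoids all of this by building the partition from the $\tau_j$ of \lem{lem5}, which are globally nondecreasing by \eqref{taudoubleprime}; after telescoping one obtains $\widetilde T_{1,n_1}=\tau_{d,n_1}$ and $\widetilde T_{n,n_1}=1-\tau_{d(n-1),n_1}$, and \eqref{7.20} holds everywhere.

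A secondary gap concerns \eqref{7.21q}. \lem{lemnew5} gives no bound on $\ttau_j^{(q)}$ for $q\ge 2$, and your Markov--Bernstein workaround does not recover the factor $\delta_{n_1}^{\alpha_1}(x)$: on any neighbourhood of $x$ of size $\sim\rho_{n_1}(x)$ near $\pm1$, the sup-norm of $\ttau_{y_i}-\chi$ need not carry that factor uniformly (and if the neighbourhood contains $y_i$, the difference is not even a polynomial there). The paper instead uses the explicit higher-derivative estimate \eqref{derivatives} from \lem{lem5}, which carries the $\delta_{n_1}^{\alpha}$ factor directly. Finally, a calibration point: converting $\psi_j^\beta$ to $\bigl(\rho_{n_1}/(\rho_{n_1}+\dist)\bigr)^{\beta_1}$ via \eqref{auxsum} costs roughly half the exponent, so $\beta=\beta_1+2$ is not enough; the paper takes $\beta=2\beta_1+\alpha_1+1$.
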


\begin{proof} Let $\tau_{i,n_1}$, $1\le i\le n_1-1$, be the polynomials from   \lem{lem5}  with
$\alpha$ and $\beta$ to be prescribed,
and denote $\tau_{0,n_1}\equiv0$ and $\tau_{n_1,n_1}\equiv1$.

Set
\[
T_{i,n_1}:=\tau_{i,n_1}-\tau_{i-1,n_1},\quad 1\leq i \leq n_1,
\]
and note that
\be\label{unit}
\sum_{i=1}^{n_1}T_{i,n_1}\equiv 1.
\ee
Let $d:= n_1/n$ and define
\be\label{tildeT}
\widetilde T_{j,n_1}:=\sum_{i=d(j-1)+1}^{dj}T_{i,n_1}=\sum_{I_{i,n_1}\subset I_j}T_{i,n_1},\quad 1\leq j \leq n.
\ee
Then \ineq{7.19} readily follows by \ineq{unit}, and \ineq{7.20} is evident.

We now note that, for all  $x\in [-1,1]$,
\[
\psi_{i \pm 1, n_1}(x) < 4 \psi_{i, n_1}(x) , \quad 1\leq i \leq n_1,
\]
(recall that $\psi_{0,n_1}(x) \equiv 0$ and $\psi_{n_1+1, n_1}(x) \equiv 0$)
and
\begin{align*}
\chi_{i,n_1}(x)-\chi_{i-1, n_1}(x)&=\chi_{[x_{i,n_1},x_{i-1, n_1})}(x) =\delta_{n_1}^\alpha (x)\chi_{[x_{i, n_1},x_{i-1, n_1})}(x)\\&\leq
C\delta_{n_1}^\alpha(x) \psi_{i, n_1}^{\beta}(x),
\end{align*}
for $2\leq i \leq n_1-1$.

Hence, by \ineq{tauj},
\begin{eqnarray*}
\lefteqn{|T_{i,n_1}(x)|}\\
&\leq & |\tau_{i,n_1}(x)-\chi_{i, n_1}(x)| + |\chi_{i, n_1}(x) - \chi_{i-1, n_1}(x)| +   |\chi_{i-1, n_1}(x)  -\tau_{i-1,n_1}(x)| \\
&\leq & C \delta_{n_1}^\alpha(x) \psi_{i, n_1}^{\beta}(x), \quad 2\leq i \leq n_1-1, \quad x\in [-1,1] .
\end{eqnarray*}
If $i=1$, then, for $x\in [-1,x_{1, n_1}) \supset [-1, x_{1, n}]$,
\[
|T_{1,n_1}(x)| = |\tau_{1,n_1}(x)| = |\tau_{1,n_1}(x) - \chi_{1, n_1}(x)| \leq C \delta_{n_1}^\alpha(x) \psi_{i, n_1}^{\beta}(x) ,
\]
and similarly, for $i=n_1$ and $x\in [x_{n_1-1, n_1}, 1]  \supset  [x_{n_1-1, n}, 1]$,
\[
|T_{n_1,n_1}(x)| = |1- \tau_{n_1-1,n_1}(x)| = |\chi_{n_1-1, n_1}(x)- \tau_{n_1-1,n_1}(x)| \leq C \delta_{n_1}^\alpha(x) \psi_{i, n_1}^{\beta}(x) .
\]
Hence, for $x \in \DD_j$,
\be \label{funct}
|\widetilde T_{j,n_1}(x)| \le  C \delta_{n_1}^\alpha(x)\sum_{I_{i,n_1}\subset I_j}  \psi_{i, n_1}^{\beta}(x) .
\ee
Similarly, it follows from \ineq{derivatives} that, for all $x\in [-1,1]$,
\be \label{deriv}
|\widetilde T_{j,n_1}^{(q)}(x)|\leq C \delta_{n_1}^\alpha(x) \sum_{I_{i,n_1}\subset I_j} h_{i, n_1}^{-q}\psi_{i,n_1}^{\beta}(x) , \quad 1\leq q \leq \alpha.
\ee
Therefore, we may treat \ineq{funct} as a particular case of \ineq{deriv} for $q=0$, keeping in mind that $x$ is assumed to be in $\DD_j$ in that case.

We are now ready to prove \ineq{7.21} and \ineq{7.21q}.
First, we note that  \ineq{rho} and \ineq{rho1} imply that
\[
\psi_{i, n_1}^2(x) = \left(\frac{h_{i,n_1}}{|x-x_{i,n_1}|+h_{i,n_1}}\right)^2\le c \frac{\rho_{n_1}(x)}{|x-x_{i,n_1}|+\rho_{n_1}(x)} .
\]
Hence,
\begin{eqnarray*}
 |\widetilde T_{j,n_1}^{(q)}(x)|
&\le& C \delta_{n_1}^\alpha(x)\sum_{I_{i,n_1}\subset I_j}  h_{i, n_1}^{-q}
\left( \frac{h_{i,n_1}}{|x-x_{i,n_1}|+\rho_{n_1}(x)}\right)^{q+1}   \times \\
&& \times  \left(\frac{\rho_{n_1}(x)}{|x-x_{i,n_1}|+\rho_{n_1}(x)}\right)^{(\beta-q-1)/2}\\
&=& C \delta_{n_1}^\alpha(x)\rho_{n_1}^{(\beta-q-1)/2}(x)\sum_{I_{i,n_1}\subset I_j}\frac{h_{i,n_1}}{(|x-x_{i,n_1}|+\rho_{n_1}(x))^{(\beta+q+1)/2}}\\
&\le&  C \delta_{n_1}^\alpha(x)\rho_{n_1}^{(\beta-q-1)/2}(x) \int_{\dist(x, I_j)}^\infty\frac{du}{(u+\rho_{n_1}(x))^{(\beta+q+1)/2}}\\
&=&
C \frac{\delta_{n_1}^\alpha(x)}{\rho_{n_1}^{q}(x)}     \left(\frac{\rho_{n_1}(x)}{\dist(x,I_j)+\rho_{n_1}(x)}\right)^{(\beta-q-1)/2}.
\end{eqnarray*}
It remains to set $\alpha  := \lceil\alpha_1 \rceil$ and $\beta := 2\beta_1+\alpha_1+1$, and the proof is complete.
\end{proof}

In the proof below, we need estimates \ineq{7.21} and \ineq{7.21q} for $x\in\DD_j$ with $\delta_{n_1}(x)$ replaced by $\delta_{n}(x)$ which is smaller near the endpoints of $[-1,1]$.

\begin{corollary}\label{coroflem4.4}
Let $\alpha_2,\beta_2>0$, and let $n,n_1\in\N$, $n_1 > n$,  be such that $n_1$ is divisible by $n$.
Then, there is a collection $\{\widetilde T_{j,n_1}\}_{j=1}^n$ of polynomials $\widetilde T_{j,n_1}\in\Poly_{C(\alpha_2,\beta_2) n_1}$, such that
\ineq{7.19} and \ineq{7.20} are valid, and
\be\label{cor7.21q}
|\widetilde T_{j,n_1}^{(q)}(x)|\le  C \frac{\delta_{n}^{\alpha_2}(x)}{\rho^q_{n_1}(x)}
\left(\frac{\rho_{n_1}(x)}{\rho_{n_1}(x)+\dist (x,I_j)}\right)^{\beta_2}, \quad 0\leq q\leq \alpha_2,
\ee
for all $x\in \DD_j$,
where all constants $C$ depend only on $\alpha_2$, $\beta_2$ and are independent of the ratio $n_1/n$.
\end{corollary}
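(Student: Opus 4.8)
The plan is to reuse the polynomials $\widetilde T_{j,n_1}$ constructed in the proof of \lem{lem4.4}, with the parameters $\alpha_1,\beta_1$ there chosen larger than needed for $\alpha_2,\beta_2$ here, and then to replace $\delta_{n_1}(x)$ by $\delta_n(x)$ in the estimates by exploiting the extra powers. The key observation is that near the endpoints the two quantities are related by $\delta_{n_1}(x) \le (n_1/n)\,\delta_n(x)$, while away from the endpoints (precisely, for $x\in[x_{n-1},x_1]$) we have $\delta_{n_1}(x)=\delta_n(x)=1$; the trouble is that $n_1/n$ can be arbitrarily large, so we cannot simply absorb this ratio into the constant $C$. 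The way around this is that we do not want $\delta_n$ with a \emph{smaller} exponent — we want to trade powers of $\psi_{i,n_1}$ (equivalently, powers of $\rho_{n_1}(x)/(\rho_{n_1}(x)+\dist(x,I_j))$) against the ratio $n_1/n$ so that the net loss is controlled.

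First I would fix $\alpha_1 := \alpha_2$ and choose $\beta_1$ large, say $\beta_1 := \beta_2 + \alpha_2 + 2$ (the exact value to be pinned down at the end), and take the $\widetilde T_{j,n_1}$ from \lem{lem4.4} for these parameters; then \ineq{7.19} and \ineq{7.20} hold verbatim. It remains to upgrade \ineq{7.21q} to \ineq{cor7.21q}. Fix $x\in\DD_j$ and $0\le q\le\alpha_2$. If $x\in[x_{n-1},x_1]$, then $\delta_n(x)=1=\delta_{n_1}(x)$ and \ineq{cor7.21q} is just \ineq{7.21q} (with $\beta_1\ge\beta_2$). So assume $x\in[-1,x_{n-1})\cup(x_1,1]$, where $\delta_n(x)=\min\{1,n\varphi(x)\}<1$. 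Here I would go back to the summed estimate \ineq{deriv} from the proof of \lem{lem4.4}, namely
\[
|\widetilde T_{j,n_1}^{(q)}(x)|\le C\,\delta_{n_1}^{\alpha_1}(x)\sum_{I_{i,n_1}\subset I_j}h_{i,n_1}^{-q}\psi_{i,n_1}^{\beta_1}(x),
\]
and bound $\delta_{n_1}^{\alpha_1}(x)\le\delta_{n_1}^{\alpha_2}(x)$. Now the point is to convert $\delta_{n_1}$ to $\delta_n$ at the cost of some powers of $\psi_{i,n_1}$. Since for $x$ near $\pm1$ we have $\delta_{n_1}(x)\le\min\{1,n_1\varphi(x)\}$ and, for any $I_{i,n_1}\subset I_j$, the knot $x_{i,n_1}$ lies in $I_j$ so that $\rho_{n_1}(x_{i,n_1})\sim\rho_{n}(x)\cdot$(something controlled by $\psi_{j,n}$), one gets a relation of the form $\delta_{n_1}(x)\le c\,\psi_{i,n_1}^{-s}(x)\,\delta_n(x)$ for an absolute $s$ — more precisely this is exactly the kind of inequality encoded in \ineq{hjrho} and \ineq{delta}–\ineq{delta1}, since $\delta_n^2(x)\sim (1-x^2)/((1+x_{i-1,n_1})(1-x_{i,n_1}))\cdot\psi_{i,n_1}^{2}(x)$-type comparisons hold across the two partitions. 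Substituting $\delta_{n_1}^{\alpha_2}(x)\le c\,\psi_{i,n_1}^{-s\alpha_2}(x)\,\delta_n^{\alpha_2}(x)$ into the sum and requiring $\beta_1 - s\alpha_2$ to be as large as the $\beta_2+q+1$ needed to run the same integral-comparison argument ($\sum_{I_{i,n_1}\subset I_j}h_{i,n_1}\,(|x-x_{i,n_1}|+\rho_{n_1}(x))^{-\gamma}\le c\int_{\dist(x,I_j)}^\infty(u+\rho_{n_1}(x))^{-\gamma}\,du$) as at the end of the proof of \lem{lem4.4}, one arrives at \ineq{cor7.21q}.

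The main obstacle, and the step I would be most careful about, is precisely the passage $\delta_{n_1}\rightsquigarrow\delta_n$ with a bound \emph{independent of $n_1/n$}: one must verify that the loss is only a fixed negative power of $\psi_{i,n_1}(x)$ (absorbable because $\beta_1$ was taken large), and not a power of the ratio $n_1/n$. This is where the hypothesis ``$n_1$ divisible by $n$'' and the fact that every $I_{i,n_1}$ in the sum sits inside the single interval $I_j=I_{j,n}$ are used: $\dist(x,I_{i,n_1})\ge\dist(x,I_j)$, $\rho_{n_1}(x)\le\rho_n(x)$, and $h_{i,n_1}\le h_{j,n}$, so all the geometry is comparable on the scale of $I_j$ uniformly in the refinement. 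Once that comparison is in hand, choosing $\beta_1$ (hence the degree bound $C(\alpha_2,\beta_2)n_1$) sufficiently large in terms of $\alpha_2,\beta_2$ finishes the proof; I would record the explicit admissible choice of $\beta_1$ at the very end, exactly as is done in the last line of the proof of \lem{lem4.4}.
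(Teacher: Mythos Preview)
Your overall strategy---apply \lem{lem4.4} with inflated parameters $\alpha_1=\alpha_2$, $\beta_1>\beta_2$, then convert $\delta_{n_1}$ to $\delta_n$---is exactly the paper's, and you correctly isolate the only nontrivial region $x\in\DD_j$ with $\varphi(x)\le 1/n$. However, you make the conversion harder than it needs to be, and the step you flag as ``the main obstacle'' is left genuinely unjustified.

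Specifically, you propose to go back to the summed form \ineq{deriv} and to establish $\delta_{n_1}(x)\le c\,\psi_{i,n_1}^{-s}(x)\,\delta_n(x)$ for every $I_{i,n_1}\subset I_j$, citing \ineq{hjrho} and \ineq{delta}--\ineq{delta1}. Those inequalities are all \emph{within a single partition}; they do not by themselves compare $\delta_{n_1}$ to $\delta_n$. The inequality you want is in fact true (with $s=1$), but to prove it you would end up needing that $|x-x_{i,n_1}|\ge\dist(x,I_j)$ is bounded below by $c/n^2$---which is precisely the observation the paper uses directly, without ever returning to the sum.

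The paper's argument is shorter: work with the \emph{conclusions} \ineq{7.21}--\ineq{7.21q} of \lem{lem4.4} (not the intermediate sum), note that for $x\in\DD_j$ with $\varphi(x)\le 1/n$ one has $\dist(x,I_j)\ge n^{-2}$ (since such $x$ lie outside $[x_{n-1},x_1]$ while $I_j$ meets it), and then peel off $\alpha_1$ of the $\beta_1$ powers to bound
\[
\left(\frac{n_1}{n}\right)^{\alpha_1}\left(\frac{\rho_{n_1}(x)}{\rho_{n_1}(x)+n^{-2}}\right)^{\alpha_1}\le 1,
\]
which is a two-line computation using $n_1\rho_{n_1}(x)=\varphi(x)+1/n_1$ and $\varphi(x)\le 1/n$. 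This yields \ineq{cor7.21q} with $\beta_1:=\alpha_2+\beta_2$, avoiding the term-by-term $\psi$-comparison entirely.
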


\begin{proof}
Since
\[
\frac{\delta_{n_1}(x)}{\delta_{n}(x)} \leq
\begin{cases} \ds
\frac{n_1}{n}, &  \mbox{\rm if}\quad \varphi(x) \leq 1/n, \\
1, &  \mbox{\rm if}\quad \varphi(x) >  1/n,
\end{cases}
\]
we only need to prove \ineq{cor7.21q} for $x\in \widetilde \DD_j := \DD_j \cap \left\{ x \st \varphi(x) \leq 1/n \right\}$ (for all other $x\in \DD_j$ it is an immediate corollary of \ineq{7.21} and \ineq{7.21q} with $\alpha_1  =\alpha_2$ and $\beta_1  =\beta_2$).
Note that for all $1\leq j\leq n$ and $x\in \widetilde \DD_j$,
\[
\dist(x, I_j) \geq \dist(\widetilde \DD_j, I_j) \geq \dist(x_1, \sqrt{1-n^{-2}}) \geq n^{-2} .
\]
Hence, it follows from \ineq{7.21} and \ineq{7.21q}, that for all $0\leq q\leq \alpha_1$, $1\leq j\leq n$ and $x\in \widetilde \DD_j$, we have
\begin{eqnarray*}
|\widetilde T_{j,n_1}^{(q)}(x)| &\le&
  C \frac{\delta_{n}^{\alpha_1}(x)}{\rho^q_{n_1}(x)}
  \left( \frac{n_1}{n} \right)^{\alpha_1}
\left(\frac{\rho_{n_1}(x)}{\rho_{n_1}(x)+\dist (x,I_j)}\right)^{\beta_1} \\
& \le&
 C \frac{\delta_{n}^{\alpha_1}(x)}{\rho^q_{n_1}(x)}
  \left( \frac{n_1}{n} \right)^{\alpha_1}
 \left(\frac{\rho_{n_1}(x)}{\rho_{n_1}(x)+n^{-2}}\right)^{\alpha_1}
\left(\frac{\rho_{n_1}(x)}{\rho_{n_1}(x)+\dist (x,I_j)}\right)^{\beta_1-\alpha_1} \\
& \le &
C \frac{\delta_{n}^{\alpha_1}(x)}{\rho^q_{n_1}(x)}
\left(\frac{\rho_{n_1}(x)}{\rho_{n_1}(x)+\dist (x,I_j)}\right)^{\beta_1-\alpha_1} ,
\end{eqnarray*}
since
\[
 \frac{n_1 \rho_{n_1}(x)}{n \rho_{n_1}(x)+1/n} \leq \frac{\varphi(x) +  1/n_1}{ n\varphi(x)/n_1 + 1/n} \leq 1 ,
\]
for $n_1\geq n$ and $\varphi(x)\leq 1/n$.
It remains to set $\alpha_1 :=\alpha_2$ and $\beta_1 := \alpha_2+\beta_2$.
\end{proof}

\sect{Simultaneous polynomial approximation of piecewise polynomials and their derivatives} \label{section8}

All constants $C$ in this section depend on $k$ and $\gamma$.

  We need the following result which is similar to \cite{LS2002}*{Lemma 18} and which is proved in a similar way.

\begin{lemma}\label{uncon}
Let $\gamma >0$, $k\in\N$, $\phi\in\Phi^k$, and let $n, n_1\in\N$ be such that $n_1$ is divisible by $n$. If
 $S\in\Sigma_{k,n}$, then there exists a polynomial $D_{n_1}(\cdot, S)$ of degree $\leq Cn_1$ such that
\be \label{7.23'}
\left|S(x)-D_{n_1}(x, S)\right|\le
C\delta_n^{\gamma}(x)  \phi(\rho_n(x))   b_k(S,\phi).
\ee
Moreover, if   $S\in \C^{r-1}[-1,1]$ for some $r\in\N$, $r\leq k$, and $A := [x_{\mu^*}, x_{\mu_*}]$, $0\leq \mu_* < \mu^* \leq n$, then for all $x\in A\setminus \{x_j\}_{j=1}^{n-1}$ and $0\leq q \leq r$,
 we have
 \begin{eqnarray} \label{7.24'}
  \left|S^{(q)}(x)-D_{n_1}^{(q)}(x,S)\right|
 &\le &  C \delta_n^{\gamma}(x)
\frac { \phi(\rho_n(x))}{\rho_n^q(x)} \bigg(  b_k(S,\phi,A) +   b_k(S,\phi) \times     \\ \nonumber
 &&
\times   \frac{n}{n_1} \left(\frac{\rho_n(x)}{ \dist(x,[-1,1] \setminus A)}\right)^{\gamma+1}   \bigg).
\end{eqnarray}
The constants $C$ above are independent of the ratio $n_1/n$.
\end{lemma}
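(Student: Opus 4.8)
The strategy is the standard two-scale device: approximate the piecewise polynomial $S\in\Sigma_{k,n}$ by first writing it as a telescoping sum over the coarse Chebyshev partition $X_n$ and replacing the characteristic-type building blocks by the partition-of-unity polynomials on the fine scale $n_1$. Concretely, on each interval $I_j$ let $p_j$ be the polynomial piece of $S$, and use the identity
\[
S(x)=\sum_{j=1}^n p_j(x)\,\bigl(\chi_{j}(x)-\chi_{j-1}(x)\bigr)
\]
(with the endpoint conventions $\chi_0\equiv 1$, $\chi_n\equiv 0$ adjusted as needed). Define $D_{n_1}(x,S):=\sum_{j=1}^n p_j(x)\,\widetilde T_{j,n_1}(x)$, where the $\widetilde T_{j,n_1}$ are the partition-of-unity polynomials from \cor{coroflem4.4} (applied with parameters $\alpha_2,\beta_2$ large, to be chosen at the end in terms of $\gamma$ and $k$). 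Since each $p_j\in\Poly_{k-1}$ and $\widetilde T_{j,n_1}\in\Poly_{Cn_1}$, the sum is a polynomial of degree $\le Cn_1$. Using $\sum_j\widetilde T_{j,n_1}\equiv 1$ from \ineq{7.19}, write
\[
S(x)-D_{n_1}(x,S)=\sum_{j=1}^n\bigl(p_{j_0}(x)-p_j(x)\bigr)\widetilde T_{j,n_1}(x),
\]
where $j_0=j_0(x)$ is the index with $x\in I_{j_0}$; this is the key algebraic manipulation that lets the smallness of $\widetilde T_{j,n_1}$ away from $I_j$ do the work.

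The main work is the pointwise estimate of $\|p_{j_0}-p_j\|$ at the point $x$. By the definition \ineq{bij} of $b_{i,j}$ and the equivalence $h_j\sim\rho_n$ on $I_j$ from \ineq{rho}, one has $\|p_{j_0}-p_j\|_{I_{j_0}}\le b_k(S,\phi)\,\phi(h_j)\,(h_{j_0,j}/h_j)^k$; then a Markov–Bernstein-type growth estimate for the polynomial $p_{j_0}-p_j\in\Poly_{k-1}$ off the interval $I_{j_0}$ gives
\[
|p_{j_0}(x)-p_j(x)|\le c\,b_k(S,\phi)\,\phi(h_j)\Bigl(\tfrac{|x-x_j|+h_j}{h_j}\Bigr)^k.
\]
Combining this with the bound \ineq{cor7.21q} for $\widetilde T_{j,n_1}$ (with $q=0$), using $\phi(h_j)\le C\psi_j^{-k}(x)\phi(\rho_n(x))$ from \ineq{hjrho} together with the monotonicity of $t^{-k}\phi(t)$, and then $\psi_j^2(x)\le c\,\rho_n(x)/(|x-x_j|+\rho_n(x))$ and \ineq{distance1}, the factor $(|x-x_j|+h_j)^k/h_j^k$ is absorbed provided $\beta_2$ was chosen large enough (say $\beta_2\ge 4k+\gamma+4$); the leftover powers of $\rho_{n_1}/(\rho_{n_1}+\dist(x,I_j))$ are summed by \ineq{sumest} (note $\rho_{n_1}\le\rho_n$ and $\dist(x,I_j)$ on the fine scale dominates on the coarse scale up to constants). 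The $\delta_n^\gamma$ factor comes directly from \ineq{cor7.21q}. This yields \ineq{7.23'}.

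For the derivative estimate \ineq{7.24'} one differentiates $q$ times, $0\le q\le r$. When $S\in C^{r-1}$ the pieces $p_j$ agree with $S$ in value and in the first $r-1$ derivatives at the knots, which is what makes the telescoped differences $p_{j_0}-p_j$ and their low-order derivatives controllable; one splits $D_{n_1}^{(q)}$ by the Leibniz rule into terms with derivatives hitting $p_j$ (estimated via Markov's inequality on $I_{j_0}$, contributing the $b_k(S,\phi,A)$ term for indices $j$ with $I_j\subset A$) and terms with derivatives hitting $\widetilde T_{j,n_1}$ (estimated via \ineq{cor7.21q} with $1\le q'\le\alpha_2$, contributing the $\rho_n^{-q}$ factor). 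The indices $j$ with $I_j\not\subset A$ are the ones far from $x$ inside $A$; for them $\dist(x,I_j)\gtrsim\dist(x,[-1,1]\setminus A)$, and the extra gain comes from the scale mismatch: on the fine partition there are about $n_1/n$ subintervals per coarse interval, and a summation by parts / telescoping over the fine scale (exactly as in \cite{LS2002}*{Lemma 18}) produces the explicit factor $\tfrac{n}{n_1}\bigl(\rho_n(x)/\dist(x,[-1,1]\setminus A)\bigr)^{\gamma+1}$ multiplying $b_k(S,\phi)$. Choosing $\alpha_2=\gamma$ (rounded up) and $\beta_2$ sufficiently large in \cor{coroflem4.4} makes all constants depend only on $k$ and $\gamma$, independent of $n_1/n$ as required.

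**Anticipated main obstacle.** The delicate point is not the global bound \ineq{7.23'} but the second term in \ineq{7.24'}: one must track the scale ratio $n_1/n$ honestly while keeping all constants independent of it. The natural-looking estimate would lose a factor $n_1/n$ from the number of fine subintervals inside each coarse one; recovering the correct power $\tfrac{n}{n_1}$ (rather than a positive power of $n_1/n$) requires exploiting cancellation among consecutive $\widetilde T_{i,n_1}$ (their partial sums are the $\tau$'s, which are essentially characteristic functions smoothed at the fine scale), i.e. a summation-by-parts argument on the fine index together with the precise decay of $\tau_{i,n_1}-\chi_{i,n_1}$ from \lem{lem5}. Getting this bookkeeping right — and making sure the $\dist(x,[-1,1]\setminus A)$ in the denominator is the correct geometric quantity rather than $\dist(x,\partial A)$ measured on the wrong scale — is where the proof has to be done carefully; everything else is routine given the inequalities collected in Sections~\ref{aux}--\ref{section7}.
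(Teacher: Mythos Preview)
Your construction of $D_{n_1}$, the algebraic identity $S-D_{n_1}=\sum_{j\ne\nu}(p_\nu-p_j)\widetilde T_{j,n_1}$, and your treatment of \ineq{7.23'} all match the paper's proof. The gap is in your mechanism for \ineq{7.24'}.

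You locate the main obstacle in extracting the factor $n/n_1$ and propose to obtain it by ``summation by parts / telescoping over the fine scale'' exploiting cancellation among consecutive $\widetilde T_{i,n_1}$. That is not the mechanism, and it is not needed. The paper gets the factor $n/n_1$ from a single pointwise inequality: $\rho_{n_1}(x)/\rho_n(x)\le n/n_1$. For a non-adjacent index $j$ (\ie $j\ne\nu,\nu\pm1$) one has $\dist(x,I_j)>\rho_n(x)/3$, so the decay factor in \ineq{cor7.21q} satisfies $\rho_{n_1}/(\rho_{n_1}+\dist(x,I_j))\le C\rho_{n_1}/\rho_n$. Peeling off $q-i+1$ of these factors converts the unwanted $\rho_{n_1}^{-(q-i)}$ from $\widetilde T_j^{(q-i)}$ into $\rho_n^{-(q-i)}$ \emph{and} leaves precisely one surviving $\rho_{n_1}/\rho_n\le n/n_1$. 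No fine-scale summation is involved; everything happens term by term.

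The actual delicate case --- which your outline glosses over --- is the \emph{adjacent} indices $j=\nu\pm1$. There $\dist(x,I_j)$ can be arbitrarily small compared with $\rho_n(x)$, so the decay factor gives no help, and the Leibniz term $(p_\nu^{(i)}-p_{\nu\pm1}^{(i)})\widetilde T_{\nu\pm1}^{(q-i)}$ is a priori as large as $\rho_{n_1}^{-(q-i)}\phi(\rho_n)$, which is too big by a factor $(\rho_n/\rho_{n_1})^{q-i}$. This is exactly where the hypothesis $S\in C^{r-1}$ enters: since $p_\nu^{(l)}(x_\nu)=p_{\nu+1}^{(l)}(x_\nu)$ for $0\le l\le q-1$, a Taylor integral gives
\[
\bigl|p_\nu^{(i)}(x)-p_{\nu+1}^{(i)}(x)\bigr|\le c\,|x-x_\nu|^{q-i}\,\|p_\nu^{(q)}-p_{\nu+1}^{(q)}\|_{I_\nu},
\]
and the gain $|x-x_\nu|^{q-i}=\dist(x,I_{\nu+1})^{q-i}$ cancels the loss $\rho_{n_1}^{-(q-i)}$ against the decay $(\rho_{n_1}/(\rho_{n_1}+\dist(x,I_{\nu+1})))^{\beta_2}$. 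Your sentence about the pieces ``agreeing in the first $r-1$ derivatives at the knots'' points in the right direction, but you attach it to the wrong step; it is not what makes the global differences controllable, it is what rescues the two neighbouring terms where the decay of $\widetilde T_j$ is useless.
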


\begin{proof}
We denote
\be\label{7.22}
D_{n_1}(x,S):=\sum_{j=1}^np_j(x)\widetilde T_{j,n_1}(x),
\ee
where $\widetilde T_{j,n_1}$ are polynomials of degree $\leq C(\alpha_2,\beta_2) n_1$ from the statement of \cor{coroflem4.4}. Note that $D_{n_1}(\cdot,S)$ is a polynomial of degree $< k+ C(\alpha_2,\beta_2) n_1$.
The parameters $\alpha_2$ and $\beta_2$ depend on $\gamma$ and $k$ are chosen to  be sufficiently large. For example, $\alpha_2 = \gamma$ and $\beta_2 = \gamma + 4k+5$ will do.

For the sake of brevity,  we will use the notation $\rho := \rho_n(x)$, $\delta := \delta_n(x)$,  $\rho_1 := \rho_{n_1}(x)$ and $\widetilde T_j := \widetilde T_{j,n_1}$.
Recall that $I_{i,j}$ is the smallest interval  containing both $I_{i}$ and $I_j$, and  $h_{i,j}:=|I_{i,j}|$.
Suppose now that $x$ is fixed and let $1\leq \nu \leq n$ be the smallest number such that $x\in I_\nu$ (\ie if $x=x_\eta$, then $x$ belongs to both $I_\eta$ and $I_{\eta+1}$, and  we pick $\nu=\eta$).

We now observe that \ineq{anotherauxest} and \ineq{distance1}  imply
\be \label{hnuj}
\frac{h_\nu}{h_j} < 5 \frac{\rho}{h_j} <  40 \frac{|x-x_j|+\rho}{\rho} \sim \frac{\rho+ \dist(x, I_j)}{\rho} , \quad 1\leq j \leq n .
\ee
Also,
\be\label{auxinequality1}
\frac{h_{\nu,j} }{h_\nu }  \leq c \frac{\rho+ \dist(x, I_j)}{\rho} ,  \quad 1\leq j \leq n .
\ee
Indeed,  if $|j-\nu|\leq 1$, then it is enough to note that \ineq{rho} implies that $h_{\nu, j} \sim h_\nu$.  If $|j-\nu|\geq 2$, then we use the fact that there is  at least one interval $I_i$ between $I_\nu$ and $I_j$, and so \ineq{rho} implies
\[
h_{\nu, j} = h_\nu + h_j + \dist(I_\nu, I_j) \leq  h_\nu +     4 \dist(I_\nu, I_j)
\leq h_\nu + 4 \dist(x, I_j),
\]
and \ineq{auxinequality1} follows.

Since $S(x)=p_\nu(x)$,  \ineq{7.19}    implies
\begin{eqnarray*}
S(x)-D_{n_1}(x, S)&=& S(x)\sum_{j=1}^n\widetilde T_j(x)-\sum_{j=1}^np_j(x)\widetilde T_j(x) \\
&=&
\sum_{1\leq j \leq n, j\ne \nu} (p_\nu(x)-p_j(x))\widetilde T_j(x) ,
\end{eqnarray*}
and so
\begin{eqnarray*}
  S^{(q)}(x)-D_{n_1}^{(q)}(x, S) & = & \sum_{1\leq j \leq n, j\ne \nu}        \left( \left(p_\nu (x)-p_j (x)\right)  \widetilde T_j (x) \right)^{(q)} \\
&=&
\sum_{1\leq j \leq n, j\ne \nu}  \sum_{i=0}^q  \binom{q}{i}      \left(p_\nu^{(i)}(x)-p_j^{(i)}(x)\right)  \widetilde T_j^{(q-i)}(x)  ,
\end{eqnarray*}
with the assumption that $x\not\in \left\{ x_j\right\}_{j=1}^{n-1}$ if $q\geq 1$, since $S^{(q)}$ may not exist at those points.
Note also that
$x\in\DD_j$ for all $1\leq j \leq n$, $j\ne \nu$, and so \ineq{cor7.21q} can be used for all polynomials $\widetilde T_j$ appearing in the above sum.

Now,  since
 \[
 \phi(h_j) \leq \phi(h_{\nu,j}) \leq \phi(h_\nu) \left( \frac{h_{\nu,j}}{h_\nu} \right)^k \leq c \phi(\rho) \left( \frac{h_{\nu,j}}{h_\nu} \right)^k ,
 \]
it follows from \ineq{bij}, \ineq{hnuj} and \ineq{auxinequality1} that,
for all $i\geq 0$ (of course, the inequality is trivial if $ i \ge k$),
\begin{eqnarray}\label{8.3}
\lefteqn{\mbox{\hspace{5mm}}  \|p_\nu^{(i)}-p_j^{(i)}\|_{I_\nu}  \le   c h_\nu^{-i} \|p_\nu -p_j \|_{I_\nu}
 \leq
c b_{\nu, j}(S,\phi) \frac{ \phi(h_j)}{h_\nu^i} \left(\frac{h_{\nu,j}}{h_j}\right)^k } \\ \nonumber
& \quad&  \leq
 c b_{\nu, j}(S,\phi) \frac{\phi(\rho)}{\rho^i} \left(\frac{h_{\nu,j}^2}{h_\nu h_j}\right)^k
  \leq
 c b_{\nu, j}(S,\phi) \frac{\phi(\rho)}{\rho^i} \left(\frac{\rho+ \dist(x, I_j)}{\rho}\right)^{3k} .
\end{eqnarray}
Observing that
\be \label{rho1rho}
\frac{\rho_1}{\rho_1+\dist (x,I_j)} \leq \frac{\rho}{\rho+\dist (x,I_j)}
\ee
and using \ineq{cor7.21q}
 we now conclude that, for all $0\leq i \leq q$ and $1\leq j \leq n$, $j\neq \nu$,
\[
  \left| \left(p_\nu^{(i)}(x)-p_j^{(i)}(x)\right)  \widetilde T_j^{(q-i)}(x) \right| \leq
C b_{\nu, j}(S,\phi) \delta^{\alpha_2}  \frac{\phi(\rho)}{\rho^i \rho_1^{q-i}}
\left(\frac{\rho_1}{\rho_1+\dist (x,I_j)}\right)^{\beta_2-3k}.
\]
If $i=q$, this becomes
\begin{align}  \label{ieqq}
\lefteqn{  \left| \left(p_\nu^{(q)}(x)-p_j^{(q)}(x) \right)  \widetilde T_j (x) \right|  }\\ \nonumber
& \quad \leq
 C b_{\nu, j}(S,\phi) \delta^{\alpha_2}  \frac{\phi(\rho)}{\rho^q}
\left(\frac{\rho_1}{\rho_1+\dist (x,I_j)}\right)^{\beta_2-3k},
\end{align}
and, in particular,
if $i=q=0$, then
\be \label{qeq0}
 \left| \left(p_\nu (x)-p_j (x)\right)  \widetilde T_j (x) \right| \leq C b_{\nu, j}(S,\phi) \delta^{\alpha_2}  \phi(\rho) \left(\frac{\rho_1}{\rho_1+\dist (x,I_j)}\right)^{\beta_2-3k} .
\ee
Now, with  an additional assumption that $j\neq \nu\pm 1$ (which implies that  $\dist(x, I_j) > \rho/3 $), and using
 $\rho_1 /\rho \leq n/n_1$, we have
\begin{eqnarray} \label{jnenu}
\lefteqn{ \left| \left(p_\nu^{(i)}(x)-p_j^{(i)}(x)\right)  \widetilde T_j^{(q-i)}(x) \right| }\\ \nonumber
& \leq  &
C  b_{\nu, j}(S,\phi) \delta^{\alpha_2} \frac{\phi(\rho)}{\rho^q}     \frac{\rho_1}{\rho}       \left( \frac{\rho }{ \rho_1+\dist (x,I_j)  } \right)^{q-i+1}
\times \\ \nonumber
&& \times
\left(\frac{\rho_1}{\rho_1+\dist (x,I_j)}\right)^{\beta_2-3k-q+i-1} \\ \nonumber
 &\leq&
C b_{\nu, j}(S,\phi) \delta^{\alpha_2} \frac{\phi(\rho)}{\rho^q}     \frac{n}{n_1}
\left(\frac{\rho_1}{\rho_1+\dist (x,I_j)}\right)^{\beta_2-3k- q-1} .
\end{eqnarray}

It remains to consider the case $q\geq 1$, $i\leq q- 1$ and $j=\nu \pm 1$. We only consider the case $j=\nu+1$, the case $j=\nu-1$ being completely analogous.

We now have to use the fact that $S$ is assumed to be sufficiently smooth. Indeed, if
  $S\in\C^{q-1}[-1,1]$, we have $p_\nu^{(l)}(x_\nu) = p_{\nu+1}^{(l)}(x_\nu)$, $0\leq l \leq q-1$, and so by \ineq{8.3},
\begin{eqnarray*}
\left|p_\nu^{(i)}(x)- p_{\nu+1}^{(i)}(x) \right| & = & \frac{1}{(q-i-1)!} \left| \int_{x_\nu}^x (x-u)^{q-i-1} \left( p_\nu^{(q)}(u)- p_{\nu+1}^{(q)}(u) \right) du \right| \\
&\leq & |x-x_\nu|^{q-i} \norm{p_\nu^{(q)} - p_{\nu+1}^{(q)}}{I_\nu} \\
& \leq &
c  |x-x_\nu|^{q-i}   b_{\nu, \nu+1}(S,\phi) \frac{\phi(\rho)}{\rho^{q}} \left(\frac{\rho+ |x-x_\nu|}{\rho}\right)^{3k} .
\end{eqnarray*}
Therefore,
\begin{eqnarray*}
\lefteqn{ \left| \left(p_\nu^{(i)}(x)-p_{\nu+1}^{(i)}(x)\right)  \widetilde T_{\nu+1}^{(q-i)}(x) \right| }\\ \nonumber
& \leq &
C b_{\nu, \nu+1}(S,\phi) \delta^{\alpha_2}  \frac{\phi(\rho) |x-x_\nu|^{q-i}}{\rho^{q} \rho_1^{q-i}}
\left(\frac{\rho_1}{\rho_1+|x-x_\nu|}\right)^{\beta_2-3k}\\ \nonumber
& \leq &
C b_{\nu, \nu+1}(S,\phi) \delta^{\alpha_2}  \frac{\phi(\rho) }{\rho^{q} }
\left(\frac{\rho_1}{\rho_1+|x-x_\nu|}\right)^{\beta_2-3k-q+i} .
\end{eqnarray*}
In summary,   the   estimate
\begin{eqnarray} \label{close}
\lefteqn{ \left| \left(p_\nu^{(i)}(x)-p_{\nu\pm 1}^{(i)}(x)\right)\widetilde T_{\nu\pm 1}^{(q-i)}(x) \right| }   \\ \nonumber
& \quad &  \leq
C b_{\nu, \nu\pm 1}(S,\phi) \delta^{\alpha_2}  \frac{\phi(\rho) }{\rho^{q} } \left(\frac{\rho_1}{\rho_1+\dist (x,I_{\nu\pm 1})}\right)^{\beta_2-3k-q},
\end{eqnarray}
is valid for all $0\leq i \leq q$ provided that $S\in\C^{q-1}[-1,1]$ (for $i=q$ it follows from \ineq{ieqq}).

Using \ineq{qeq0}, \ineq{rho1rho}, \ineq{sumest} and  the estimate $b_{\nu, j}(S,\phi) \leq b_k(S,\phi)$, we have
\begin{eqnarray} \label{firstestimate}
 \lefteqn{  \left| S(x)-D_{n_1}(x, S) \right| }\\ \nonumber
 & \leq &
C b_k(S,\phi)  \delta^{\alpha_2}  \phi(\rho)  \sum_{1\leq j \leq n, j\ne \nu} \left(\frac{\rho}{\rho +\dist (x,I_j)}\right)^{\beta_2-3k} \\ \nonumber
& \leq &
C b_k(S,\phi)  \delta^{\gamma} \phi(\rho) ,
\end{eqnarray}
and \ineq{7.23'} is proved.

We will now prove (\ref{7.24'}). Suppose that $S\in\C^{r-1}[-1,1]$ and $0\leq q\leq r$.
We write
\begin{eqnarray*}
 \lefteqn{ S^{(q)}(x)-D_{n_1}^{(q)}(x, S)}\\
& =  &
\sum_{1\leq j \leq n, j\ne \nu}        \left( \left(p_\nu (x)-p_j (x)\right)  \widetilde T_j (x) \right)^{(q)}  \\
& = &
\left(  \sum_{j \in \J_1} +  \sum_{j \in \J_2} +  \sum_{j \in \J_3} +  \sum_{j \in \J_4}  \right) \left( \left(p_\nu (x)-p_j (x)\right)  \widetilde T_j (x) \right)^{(q)} \\
& =: &
\sigma_1(x) + \sigma_2(x) + \sigma_3(x) + \sigma_4(x) ,
\end{eqnarray*}
where
\begin{eqnarray*}
\J_1 &:=&  \left\{ j \st 1\leq j \leq n, I_j \subset A , j \ne \nu, \nu \pm 1  \right\}, \\
\J_2 &:=&  \left\{ j \st 1\leq j \leq n, I_j \not\subset A , j \ne \nu, \nu \pm 1  \right\} ,\\
\J_3 &:=&  \left\{ j \st  1\leq j \leq n,  j=\nu+1  \right\} , \\
\J_4 &:=&  \left\{ j \st  1\leq j \leq n, j = \nu - 1    \right\} .
\end{eqnarray*}
Note that some of the  sets $\J_l$ may be empty (making the corresponding functions $\sigma_l \equiv 0$). For example, if $\nu=1$, then $\J_4 = \emptyset$ and $\sigma_4\equiv 0$;  if $A \subset I_{\nu+1} \cup I_\nu \cup I_{\nu+1} $, then $\J_1 = \emptyset$ and $\sigma_1\equiv 0$,  etc.

In order to estimate $\sigma_1$, using \ineq{jnenu}, \ineq{rho1rho},  \ineq{sumest} and the estimate $b_{\nu, j}(S,\phi) \leq b_k(S,\phi, A)$, $j\in \J_1$, we have
\begin{eqnarray*}
| \sigma_1(x) | &\leq &
 C b_k(S,\phi, A) \delta^{\alpha_2} \frac{\phi(\rho)}{\rho^q}     \frac{n}{n_1}
\sum_{j \in \J_1}
\left(\frac{\rho_1}{\rho_1+\dist (x,I_j)}\right)^{\beta_2-3k-q-1}  \\
& \leq &
C b_k(S,\phi, A) \delta^{\gamma} \frac{\phi(\rho)}{\rho^q} .
\end{eqnarray*}

To estimate $\sigma_2$, we  use \ineq{jnenu}, \ineq{rho1rho}, \ineq{distance1}, \ineq{anotherauxest}  and    $b_{\nu, j}(S,\phi) \leq b_k(S,\phi)$, $j\in \J_2$, and write
\begin{eqnarray*}
| \sigma_2(x) | &\leq &
 C b_k(S,\phi) \delta^{\alpha_2} \frac{\phi(\rho)}{\rho^q}    \frac{n}{n_1}
\sum_{j \in \J_2}
\left(\frac{\rho_1}{\rho_1+\dist (x,I_j)}\right)^{\beta_2-3k- q-1}  \\
 &\leq &
 C b_k(S,\phi) \delta^{\alpha_2} \frac{\phi(\rho)}{\rho^q}    \frac{n}{n_1}
\sum_{j \in \J_2} \frac{h_j}{\rho}
\left(\frac{\rho }{\rho +|x-x_j|}\right)^{\beta_2-3k-q-2}  \\
&\leq &
 C b_k(S,\phi) \delta^{\gamma} \frac{\phi(\rho)}{\rho^q}    \frac{n}{n_1}
\sum_{j \in \J_2} \frac{h_j}{\rho}
\left(\frac{\rho }{\rho +|x-x_j|}\right)^{\gamma+2}  \\
&\leq &
 C b_k(S,\phi) \delta^{\gamma} \frac{\phi(\rho)}{\rho^q}    \frac{n}{n_1} \rho^{\gamma+1}
\sum_{j \in \J_2}
 \frac{h_j }{(\rho +|x-x_j|)^{\gamma+2}}  \\
&\leq &
 C b_k(S,\phi) \delta^{\gamma} \frac{\phi(\rho)}{\rho^q}    \frac{n}{n_1} \rho^{\gamma+1}
\int_{\dist(x,[-1,1]\setminus A)}^\infty
 \frac{du }{(\rho +u)^{\gamma+2}}  \\
 & \leq &
 C b_k(S,\phi) \delta^{\gamma} \frac{\phi(\rho)}{\rho^q}    \frac{n}{n_1}
 \left(\frac{\rho}{\rho+\dist(x,[-1,1]\setminus A)}\right)^{\gamma+1} .
\end{eqnarray*}
Finally, we will estimate $\sigma_3$ (the proof for $\sigma_4$ is completely analogous).
First, if $I_{\nu+1} \subset A$, then $b_{\nu, \nu+1}(S,\phi) \leq b_k(S,\phi, A)$ and so \ineq{close} yields
\[
|\sigma_3(x)| \leq C b_k(S,\phi, A) \delta^{\gamma} \frac{\phi(\rho)}{\rho^q} .
\]
If $I_{\nu+1} \not\subset A$, then $\nu=\mu^*$ (and so $\dist(x,[-1,1]\setminus A)  \leq |x-x_\nu| = \dist(x, I_{\nu+1})$),
$b_{\nu, \nu+1}(S,\phi) \leq b_k(S,\phi)$,
and again using \ineq{close}, we have
\begin{eqnarray*}
|\sigma_3(x)| &\leq&  C b_k(S,\phi) \delta^{\gamma} \frac{\phi(\rho)}{\rho^q}\left(\frac{\rho_1}{\rho_1+\dist(x, I_{\nu+1})}\right)^{\gamma+1} \\
&\leq &
C b_k(S,\phi) \delta^{\gamma} \frac{\phi(\rho)}{\rho^q}   \frac{\rho_1}{\rho}   \left(\frac{\rho }{\rho_1+\dist(x,[-1,1]\setminus A)}\right)^{\gamma+1} \\
&\leq &
C b_k(S,\phi) \delta^{\gamma} \frac{\phi(\rho)}{\rho^q}   \frac{n}{n_1}   \left(\frac{\rho }{\dist(x,[-1,1]\setminus A)}\right)^{\gamma+1} .
\end{eqnarray*}
The proof is now complete.
\end{proof}

\sect{One particular polynomial with controlled first derivative} \label{yetanother}
All constants $C$ in this section depend on $\alpha$ and $\beta$.

 The following lemma is a modification of  \cite{LS2002}*{Lemma 10}.

\begin{lemma}\label{QM}
Let $\alpha, \beta >0$, $k\in\N$ and $\phi\in\Phi^k$. Also,
let $E\subset [-1,1]$ be a closed interval which is the union of $m_E \ge 100$ of the
intervals $I_j$, and let a set $J\subset  E$ consist of $m_J$   intervals $I_j$, where
$1\le m_J < m_E/4$. Then there exists a
polynomial $Q_n(x)=Q_n(x,E,J)$ of degree $\le Cn$, satisfying
\begin{align}  \label{Qu}
Q_n'(x)&\ge C
\frac {m_E}{m_J} \delta_n^{8\alpha}(x) \frac{\phi(\rho_n(x))}{\rho_n(x)}
\left(\frac{\rho_n(x)}{\max\{\rho_n(x),\dist (x,E)\}}\right)^{60(\alpha+\beta)+4k+2}, \\ \nonumber
 &  \quad x\in J\cup([-1,1]\setminus E),
\end{align}
\be\label{Qu1}
Q_n'(x)\ge -\delta_n^{\alpha}(x) \frac{\phi(\rho_n(x))}{\rho_n(x)},\quad x\in E\setminus J,
\ee
and
\begin{eqnarray}  \label{Qu2}
|Q_n(x)| &\le &  C \,m_E^{k+3}\delta_n^{\alpha}(x) \rho_n(x)\,\phi(\rho_n(x))\sum_{j: \, I_j\subset  E}\frac{h_j}{(|x-x_j|+\rho_n(x))^2}, \\ \nonumber
&& \qquad   x\in[-1,1].
\end{eqnarray}
\end{lemma}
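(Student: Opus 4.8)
The plan is to construct $Q_n$ as a sum of two pieces: a ``main'' increasing piece built from the polynomials $\tau_j$ of \lem{lem5} that forces the large positive lower bound on $J\cup([-1,1]\setminus E)$, and a ``correction'' piece that is uniformly small but whose derivative does not spoil \ineq{Qu1} on $E\setminus J$. More precisely, following the pattern of \cite{LS2002}*{Lemma 10}, I would first set $\Phi(t) := t\phi(t)/\rho_n$-type weights aside and work with the function $\phi$ directly. Write $J = \bigcup_{i} I_{j_i}$ and, roughly speaking, let
\[
Q_n(x) := \lambda \sum_{I_j\subset J} \phi(h_j)\, \tau_j(x) \; - \; (\text{a polynomial approximating a suitable step function on } E),
\]
where $\lambda \sim m_E/m_J$ is the amplification constant needed in \ineq{Qu}, and where the subtracted polynomial is again assembled from the $\tau_j$'s (or their differences) so as to cancel the growth of the first sum \emph{outside} a neighbourhood of $J$ while contributing a controlled amount inside $E\setminus J$. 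The degree bound $\deg Q_n \le Cn$ is then immediate since each $\tau_j$ has degree $\le Cn$ and there are $\le n$ of them, with $C$ depending on $\alpha,\beta$ through the exponents $\mu,\xi$ in \lem{lem5}.

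The verification of \ineq{Qu} is where \ineq{taudoubleprime} does the work: on $J$, at least one term $\phi(h_j)\tau_j'(x)$ with $I_j \ni x$ (or adjacent) is bounded below by $C\phi(h_j) h_j^{-1}\delta_n^{8\alpha}(x)\psi_j^{30(\alpha+\beta)}(x)$, and since $\psi_j(x)\sim 1$ for $x\in I_j$ and $\phi(h_j)/h_j \sim \phi(\rho_n(x))/\rho_n(x)$ there by \ineq{rho}, this gives the claimed bound with the factor $\bigl(\rho_n/\max\{\rho_n,\dist(x,E)\}\bigr)^{\cdots}$ trivially equal to $1$; for $x\in [-1,1]\setminus E$ one uses \ineq{hjrho} and \ineq{distance1} to convert $\psi_j^{30(\alpha+\beta)}(x)$ and $\phi(h_j)/h_j$ into $\phi(\rho_n(x))/\rho_n(x)$ times the distance factor, being careful that the exponent $60(\alpha+\beta)+4k+2$ on the right of \ineq{Qu} is smaller than what the raw estimate produces, leaving room to absorb the losses from all the auxiliary inequalities and from summing over $j$. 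The amplification $\lambda\sim m_E/m_J$ survives because on $[-1,1]\setminus E$ the correction term, being designed to cancel only the polynomial \emph{tails}, is of lower order, and because $\sum_{I_j\subset J}$ of the distance-weighted quantities telescopes against $m_J$.

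For \ineq{Qu1} on $E\setminus J$, the first sum has nonnegative derivative (each $\tau_j'\ge 0$), so only the correction term can make $Q_n'$ negative; by construction its derivative is $\ge -\delta_n^\alpha(x)\phi(\rho_n(x))/\rho_n(x)$ there, which is exactly the content of \ineq{Qu1} — this is obtained from \ineq{derivatives} with $q=1$ and the choice of the correction's coefficients $\sim \phi(h_j)$, plus \ineq{hjrho} to handle the $\psi_j$-powers and the bound $m_J < m_E/4$ to keep the total number of ``bad'' terms under control. Finally, \ineq{Qu2} is a pure size estimate: using \ineq{tauj} (and the triangle inequality $|\tau_j| \le |\chi_j| + |\chi_j-\tau_j| \le 1 + C\delta_n^\alpha\psi_j^\beta$, then integrating the derivative bound to get the extra $\rho_n$ factor) one bounds $|Q_n(x)|$ by $C\lambda \sum_{I_j\subset E}\phi(h_j)\,\delta_n^\alpha(x)\rho_n(x)\,h_j/(|x-x_j|+\rho_n(x))^2$ via \ineq{auxsum}, and then replaces $\lambda\phi(h_j)$ by $m_E^{k+3}\phi(\rho_n(x))$ using $\lambda \le m_E$ and $\phi(h_j)\le C\psi_j^{-k}\phi(\rho_n(x))$ from \ineq{hjrho}, the extra powers of $m_E$ giving the slack to cover the geometric-series losses. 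The main obstacle will be the bookkeeping of exponents — making sure that the powers $30(\alpha+\beta)$, $8\alpha$, $60(\alpha+\beta)+4k+2$, $k+3$, etc., are consistent across the three estimates after all applications of \ineq{rho}, \ineq{hjrho}, \ineq{distance1} and \ineq{sumest}, and that the correction polynomial can simultaneously kill the tail of the main term outside $E$ while respecting \ineq{Qu1} inside $E\setminus J$; this is the delicate balancing that makes the constants depend on $\alpha$ and $\beta$ but not on $m_E/m_J$ beyond the explicit factor.
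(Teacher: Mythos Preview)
Your plan has the right overall shape, but there is a genuine gap in the construction of the ``correction'' piece: it cannot be assembled from the $\tau_j$'s of \lem{lem5} alone. In the paper the polynomial is
\[
Q_n=\kappa\Bigl(\tfrac{m_E}{m_J}\sum_{j\in\A}\phi(h_j)\,\tau_j-\lambda\sum_{j\in\B}\phi(h_j)\,\ttau_j\Bigr),
\qquad \A=\J\cup\{j_*,j^*\},\ \B=\E\setminus\A,
\]
and the whole argument hinges on the sign property \ineq{decreasing} of the $\ttau_j$ from \lem{lemnew5}: $\ttau_j'(x)\le 0$ for $x\notin I_j$. This is exactly what makes the two derivative estimates compatible. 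On $J\cup([-1,1]\setminus E)$ every point lies outside every $I_j$ with $j\in\B$, so the correction's derivative is $\ge 0$ and the lower bound \ineq{taudoubleprime} on the main sum delivers \ineq{Qu} uncontested. On $E\setminus J$ (away from the two endpoint intervals) the point lies in exactly one $I_{j_0}$ with $j_0\in\B$; all other correction terms again have $\ttau_j'\le 0$, so only that \emph{single} term can push $Q_n'$ down, and \ineq{newderivatives} bounds it by $C\kappa\,\phi(\rho)/\rho$, which one kills by choosing $\kappa$ small. If instead you use $\tau_j$ for the correction, then $\tau_j'\ge 0$ everywhere, the correction's derivative is $\le 0$ everywhere, and outside $E$ its magnitude (controlled by the upper bound $\delta^{\alpha}\psi_j^{\beta}$ of \ineq{derivatives}) is \emph{larger} than the main term's lower bound $\delta^{8\alpha}\psi_j^{30(\alpha+\beta)}$ from \ineq{taudoubleprime}, since the latter carries higher exponents on quantities $\le 1$. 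So \ineq{Qu} fails. Your claims that ``the correction term \dots\ is of lower order'' outside $E$ and that ``$m_J<m_E/4$ keeps the bad terms under control'' on $E\setminus J$ do not survive without the sign information from \lem{lemnew5}.

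Two further points you will need once the correction is fixed. First, the main sum must run over $\A=\J\cup\{j_*,j^*\}$, not just $\J$: for $x$ just outside $E$ but far from $J$, only the endpoint index $j_*$ or $j^*$ gives $\psi_j(x)\sim\rho/\dist(x,E)$, which is what produces the factor $\bigl(\rho/\max\{\rho,\dist(x,E)\}\bigr)^{60(\alpha+\beta)+4k+2}$ in \ineq{Qu}. Second, the coefficient $\lambda$ in front of the correction is \emph{chosen} so that $Q_n(1)=0$; this normalization is what makes the step-function comparison $L(x)$ vanish outside $E$ in the proof of \ineq{Qu2}, and the fact that $\lambda$ is then bounded independently of $m_E/m_J$ is not automatic --- it requires the combinatorial argument with the central subinterval $\widetilde E$ and \ineq{middleeq}.
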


\begin{proof} First, it will be shown that we may assume that $I_n \not\subset E$ provided that   the condition $m_J < m_E/4$ is replaced by a slightly weaker  $m_J \leq m_E/4$.

Suppose that the lemma is proved for all $E_1$ such that $I_n \not\subset E_1$,  let   $E$ be such that $I_n\subset E$,   set $E_1 :=\left(E\setminus I_n\right)^{\cl}$ and $Q_n(x, E, J) := Q_n(x, E_1, J_1)$ (with $J_1$ to be prescribed), and consider the following three cases noting that, if the inequality in \ineq{Qu} holds for a particular $x$ then the the inequality in \ineq{Qu1} holds for that $x$ as well, and that $\max\{\rho_n(x),\dist (x,E_1)\} \sim \max\{\rho_n(x),\dist (x,E)\}$.

{\bf Case (i):}  If $I_n \subset J$ and $m_J\ge 2$, then we  define   $J_1 :=\left(J\setminus I_n\right)^{\cl}$, and note that $E_1\setminus J_1 = E\setminus J$ (and so
$J_1\cup([-1,1]\setminus E_1) = J\cup([-1,1]\setminus E)$),
$1\leq m_{J_1} <  m_{E_1}/4$,
and $m_{E_1}/m_{J_1}   < 2 m_{E}/m_{J}$.

{\bf Case (ii):} If $m_J=1$ and $J=I_n$, then
we define $J_1 := I_{n-1}$, and note that $E_1\setminus J_1 \subset E\setminus J$ (and so  $J_1\cup([-1,1]\setminus E_1) \supset  J\cup([-1,1]\setminus E)$),
$1=m_{J_1}< m_{E_1}/4$, and $m_{E_1}/m_{J_1}   <   m_{E}/m_{J}$.

{\bf Case (iii):} If $I_n \not\subset J$, then $J\subset E_1$ and we define $J_1 :=J$. Then, $E_1\setminus J_1 \subset E\setminus J$,
$1\leq m_{J_1} \leq m_{E_1}/4$ (since $4m_J < m_E$ implies that $4m_J \leq m_E-1 = m_{E_1}$),
 and
$m_{E_1}/m_{J_1}   <   m_{E}/m_{J}$.

Hence, in the rest of the proof, we assume that $I_n\not\subset E$ and $m_J\le m_E/4$.

It is convenient to use the notation $\rho := \rho_n(x)$, $\delta := \delta_n(x)$ and $\psi_j := \psi_j(x)$.
It is also convenient to denote
\begin{eqnarray*}
\E &:=&  \left\{ 1\leq j \leq n \st I_j \subset E\right\} , \quad
\J  :=   \left\{ 1\leq j \leq n \st I_j \subset J\right\},\\
j_* &:=&  \min \left\{ j \st j\in \E\right\}, \quad  j^*  :=   \max \left\{ j \st j\in \E\right\} , \\
\A &:= & \J \cup \{j_*, j^*\} \andd \B := \E \setminus \A .
\end{eqnarray*}
Note that $j^* = j_* + m_E-1$, $E = [x_{j^*}, x_{j_*-1}]$,
$\#\E = m_E$, $m_J = \#\J \sim \#\A$, and $\#\B \sim m_E$.

Note that \ineq{hjrho} implies $c \psi_j^2  \rho \leq h_j \leq c \psi_j^{-1}  \rho$, and so
\be \label{upperphi}
\phi(h_j) \leq   \max\{1, h_j^k \rho^{-k}\} \phi(\rho) \leq c \psi_j^{-k} \phi(\rho).
\ee
Similarly,
\be \label{lowphi}
 \phi(h_j)   \geq   \min\{ 1, h_j^k \rho^{-k}\} \phi(\rho) \geq c \psi_j^{2k} \phi(\rho) .
\ee

Let
\[
Q_n(x) := \kappa \left( \frac{m_E}{m_J} \sum_{j\in\A} \tau_j(x)\phi(h_j) - \lambda \sum_{j\in\B} \ttau_j(x)\phi(h_j)\right) ,
\]
where
 $\tau_j$ and $\ttau_j$ are polynomials of degree $\leq C n$ from Lemmas~\ref{lem5} and \ref{lemnew5}, respectively,
$\lambda$ is chosen so that
\be \label{lambda}
Q_n(1) = \frac{m_E}{m_J} \sum_{j\in\A}  \phi(h_j) - \lambda \sum_{j\in\B}  \phi(h_j) =0 ,
\ee
and $\kappa$ is to be prescribed.

We will now show that $\lambda$ is bounded by a constant independent of $m_E/m_J$.

Let $\widetilde E\subset E$ be the subinterval of $E$ such that
\begin{itemize}
\item[(i)] $\widetilde E$ is a union of $\lfloor m_E/3 \rfloor$ intervals $I_j$, and
\item[(ii)]  $\widetilde E$ is  centered at $0$ as much as $E$   allows it, \ie among all subintervals of $E$ consisting of $\lfloor m_E/3 \rfloor$ intervals $I_j$, the center of $\widetilde E$ is closest to $0$.
\end{itemize}

Then, using the fact that the lengths of $|I_i|$ in the Chebyshev partition are increasing toward the middle of $[-1,1]$ and are decreasing toward the endpoints, we conclude that every interval  $I_j$ inside $\widetilde E$ is not smaller than any interval  $I_i$ in $E\setminus \widetilde E$, \ie
\be\label{compare}
{\rm if}\quad I_j \subset \widetilde E\quad{\rm and}\quad I_i \subset E\setminus \widetilde E,\quad{\rm then}\quad |I_j|\geq |I_i|.
\ee
Moreover, we will now show that all intervals $I_j$ inside $\widetilde E$ have  about the same lengths.

We use the following result (see \cite{LS2002}*{Lemma 5} which, unfortunately, contains an inadvertent omission in the conditions for \cite{LS2002}*{(4.6)}):
\begin{quote}
If $0\leq j_1 \leq i < j_2\leq n$, then
\be \label{alleq}
\frac{j_2-j_1}{2}  \leq \frac{x_{j_1}-x_{j_2}}{x_i-x_{i+1}} \leq (j_2-j_1)^2 .
\ee
Moreover, if, in addition, either $2i+1\le j_2+j_1$ and $j_2\leq 3j_1$, or $2i+1>j_2+j_1$ and $n-j_1 \leq 3(n-j_2)$,
then
\be \label{middleeq}
\frac{j_2-j_1}{2} \leq \frac{x_{j_1}-x_{j_2}}{x_i-x_{i+1}} \leq 2(j_2-j_1) .
\ee
In particular, if both inequalities
\be \label{conditions}
 j_2\leq 3j_1  \andd  n-j_1 \leq 3(n-j_2)
 \ee
are satisfied, then \ineq{middleeq} holds.
 \end{quote}

Suppose that $\widetilde E = [x_{i^*}, x_{i_*}]$. Then $i^*-i_* =  \lfloor m_E/3 \rfloor$. Now, if $0\in \widetilde E$, then $i_* \leq n/2 \leq i^*$, and so
$3i_*-i^* = 2i^* - 3  \lfloor m_E/3 \rfloor \geq n -3  \lfloor m_E/3 \rfloor \geq 0 $ and
$3(n-i^*)-(n-i_*) = 2n-3\lfloor m_E/3 \rfloor - 2i_* \geq 0$. Therefore, conditions \ineq{conditions} are satisfied.

If $0\not\in \widetilde E$, then either $E\subset (0,1]$ or $E\subset [-1,0)$ and so, in particular, $m_E\leq \lfloor n/2 \rfloor$. Suppose that $E\subset (0,1]$ (the other case can be dealt with by symmetry).
Then $i^*=j^* < n/2$ and $i_* = j^* - \lfloor m_E/3 \rfloor = j_*+m_E-1 - \lfloor m_E/3 \rfloor \geq m_E - \lfloor m_E/3 \rfloor \geq 2m_E/3$. Hence,
$3i_*-i^* = 2 i_* - \lfloor m_E/3 \rfloor \geq   m_E/3  \geq 0$ and
$3(n-i^*)-(n-i_*) = 2n - 3 i^* + i_* > n/2 >0 $. Hence, conditions \ineq{conditions} are satisfied in this case as well.

Using \ineq{middleeq} we now conclude that
\[
|I_j| \sim   \frac{|\widetilde E|}{m_E}, \quad \mbox{\rm for all }\; I_j \subset \widetilde E .
\]
Now, denote $\widetilde \E  :=   \left\{ 1\leq j \leq n \st I_j \subset \widetilde E\right\}$. Since $\# \widetilde \E =  \lfloor m_E/3 \rfloor$,   for
 $\widetilde \B := \B \cap \widetilde\E = \widetilde\E \setminus\A$, we have
\[
\# \widetilde \B\ge \#\widetilde \E - \#\A \ge \lfloor m_E/3 \rfloor -m_J-2\ge m_E/3-m_E/4-3\ge m_E/20.
\]

Therefore,
\[
\sum_{j\in\B}  \phi(h_j) \geq \sum_{j\in\widetilde \B}  \phi(h_j) \sim \#\widetilde\B \cdot \phi\left( |\widetilde E|/m_E\right) \sim m_E \cdot \phi\left( |\widetilde E|/m_E\right) ,
\]
and since by \ineq{compare},
\[
\sum_{j\in\A}  \phi(h_j) \leq c\, \#\A \cdot \phi\left( |\widetilde E|/m_E\right) \sim m_J \cdot \phi\left( |\widetilde E|/m_E\right) ,
\]
we conclude that
\[
0 < \lambda  \leq   c \frac{m_E}{m_J} \cdot \frac{ m_J \cdot \phi\left( |\widetilde E|/m_E\right) }{m_E \cdot \phi\left( |\widetilde E|/m_E\right)} \sim 1 ,
\]
\ie $\lambda$ is bounded by a constant independent of $m_E/m_J$.

Now, for any $x\in J\cup([-1,1]\setminus E)$ (as well as for any $x\in I_{j_*} \cup I_{j^*}$), taking into account that $\ttau_j'(x)\leq 0$ for all $j\in\B$,  and using \lem{lem5},  \ineq{lowphi} and \ineq{auxsum}  we have
\begin{eqnarray*}
Q_n'(x) & \geq & \kappa   \frac{m_E}{m_J} \sum_{j\in\A} \tau_j'(x)\phi(h_j) \\
& \geq &
C \kappa \delta^{8\alpha}(x) \frac{m_E}{m_J} \sum_{j\in\A} \phi(h_j) h_j^{-1}    \psi_j^{30(\alpha+\beta)}  \\
& \geq &
C \kappa \delta^{8\alpha}(x) \frac{m_E}{m_J} \frac{\phi(\rho)}{\rho}  \sum_{j\in\A} \psi_j^{30(\alpha+\beta)+2k+1}  \\
&\geq &
C \kappa \delta^{8\alpha}(x) \frac{m_E}{m_J} \frac{\phi(\rho)}{\rho}  \sum_{j\in\A} \left( \frac{\rho}{\rho+|x-x_j|} \right)^{60(\alpha+\beta)+4k+2} \\
& \geq &
C \kappa \delta^{8\alpha}(x) \frac{m_E}{m_J} \frac{\phi(\rho)}{\rho} \left(\frac{\rho }{\max\{\rho ,\dist (x,E)\}}\right)^{60(\alpha+\beta)+4k+2} ,
\end{eqnarray*}
 since, for $x\not\in E$, $\max\{\rho, \dist(x, E)\} \sim \min \left\{ |x-x_{j^*}|,|x-x_{j_*}|\right\} + \rho$, and, for $x\in J$, $x\in I_j$ for some $j\in A$, and so
 $\rho/(|x-x_j|+\rho) \sim 1$ for that $j$.

If $x\in E\setminus J$ and $x\not\in I_{j_*} \cup I_{j^*}$,  then  there exists $j_0\in \B$ such that $x\in I_{j_0}$. Hence,
\begin{eqnarray*}
Q_n'(x) & \geq & - \kappa \lambda \ttau_{j_0}'(x) \phi(h_{j_0})
  \geq   - C \kappa  h_{j_0}^{-1} \delta^\alpha \psi_{j_0}^\beta \phi(h_{j_0}) \\
  & \geq &
  - C \kappa  \frac{\phi(\rho)}{\rho} \delta^\alpha \geq - \frac{\phi(\rho)}{\rho} \delta^\alpha ,
\end{eqnarray*}
for sufficiently small $\kappa$.

We now estimate $|Q_n(x)|$.
Let
\[
L(x) := \kappa \left( \frac{m_E}{m_J} \sum_{j\in\A} \chi_j(x)\phi(h_j) - \lambda \sum_{j\in\B} \chi_j(x)\phi(h_j)\right) .
\]
Then, by virtue of \ineq{tauj}, \ineq{newtauj}, \ineq{upperphi} and $\psi_j^2  \leq c \rho(|x-x_j|+\rho)^{-1}$, we have
\begin{eqnarray*}
\lefteqn{|Q_n(x)-L(x)|}\\
 & =& \kappa  \left| \frac{m_E}{m_J} \sum_{j\in\A} \left( \tau_j(x) -\chi_j(x) \right) \phi(h_j) - \lambda \sum_{j\in\B} \left( \ttau_j(x) -\chi_j(x) \right) \phi(h_j)\right| \\
& \leq & C m_E \delta^\alpha \sum_{j\in\E} \phi(h_j) \psi_j^\beta
 \leq
C m_E \delta^\alpha \phi(\rho) \sum_{j\in\E} \psi_j^{\beta-k}  \\
& \leq & C m_E \delta^\alpha \phi(\rho) \sum_{j\in\E} \frac{h_j}{\rho} \psi_j^{\beta-k-2}   \\
&\leq& C m_E \delta^\alpha \phi(\rho) \sum_{j\in\E} \frac{h_j}{\rho}  \left( \frac{\rho}{|x-x_j|+\rho} \right)^{(\beta-k-2)/2} \\
&\leq& C m_E \delta^\alpha \phi(\rho) \sum_{j\in\E}   \frac{h_j \rho}{(|x-x_j|+\rho)^2},
\end{eqnarray*}
provided $(\beta-k-2)/2 \geq 2$.

Hence, it remains to estimate $|L(x)|$. First assume that $x\not\in E$. If $x\le x_{j^*}$, then $\chi_j(x)=0$, $j\in\A \cup\B$, and $L(x)=0$. If, on the other hand, $x>x_{j_*}$, then $\chi_j(x)=1$, $j\in\A \cup\B$, so that \ineq{lambda} implies that $L(x)= 0$.   Hence, in particular, $L(x)=0$ for $x\in I_1\cup I_n$.

Suppose now that  $x\in E\setminus I_1$ (recall that we already assumed that $E$ does not contain $I_n$). Then,
\ineq{alleq} implies that, for all $j\in\E$, $h_j \leq c |E|/m_E \leq c \rho m_E$ (since, again by \ineq{alleq}, it follows that $|E| \leq c \rho m_E^2$), and so
$\phi(h_j) \leq c m_E^k \phi(\rho)$.

Hence, since $\delta = 1$ on $[x_{n-1}, x_1]$,
\begin{eqnarray*}
|L(x)|& \leq& C\left(\frac{m_E}{m_J} \sum_{j\in\A} \phi(h_j)+\lambda  \sum_{j\in\B} \phi(h_j)\right)\\
&\leq& C m_E^{k+1} \delta^\alpha \phi(\rho) .
\end{eqnarray*}
It remains to note that
\[
1 = |E| \sum_{j\in\E} \frac{h_j}{|E|^2} \leq c |E| \sum_{j\in\E} \frac{h_j}{(|x-x_j|+\rho)^2} \leq c m_E^2 \sum_{j\in\E} \frac{\rho h_j}{(|x-x_j|+\rho)^2},
\]
and the proof is complete.
\end{proof}

\sect{Monotone polynomial approximation of piecewise polynomials}\label{sec5}
All constants $C$ and $C_i$ in this section depend only on $k$ and $\alpha$.

  First, we need the following auxiliary result, the proof of which is similar to that of \cite{LS2002}*{Lemma 12}.

\begin{lemma} \label{newlemmasec10}
Let $k\in\N$, $\phi\in\Phi^k$ and  $S\in\Sigma_{k,n}$ be such that
\be \label{newin1}
b_k(S,\phi) \leq 1 .
\ee
 If   $1\leq \mu, \nu \leq n$ are such that the interval $I_{\mu,\nu}$ contains at least $2k-3$  intervals $I_i$ and points $x_i^*\in (x_i, x_{i-1})$ so that

\be \label{newin2}
\rho_n(x_i^*) \phi^{-1} (\rho_n(x_i^*)) |S'(x_i^*)| \leq 1,
\ee
then, for every $1\leq j\leq n$, we have
\be \label{infin}
\norm{ \rho_n \phi^{-1}(\rho_n) S'}{L_\infty(I_j)} \leq c(k) \left[ (j-\mu)^{4k} + (j-\nu)^{4k} \right] .
\ee
\end{lemma}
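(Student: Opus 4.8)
The plan is to control $\|\rho_n\phi^{-1}(\rho_n)S'\|_{L_\infty(I_j)}$ by propagating the information from the ``good'' intervals in $I_{\mu,\nu}$ outward, using that $S'$ restricted to each $I_j$ is a polynomial of degree $\le k-2$ together with the bound $b_k(S,\phi)\le 1$, which controls how much the polynomial pieces $p_j$ can differ from one another. The natural first step is to renormalize: by \ineq{rho}, on $I_j$ the quantity $\rho_n$ is $\sim h_j$, so \ineq{infin} is equivalent to showing $\|p_j'\|_{I_j}\le c(k)\,\phi(h_j)h_j^{-1}\big[(j-\mu)^{4k}+(j-\nu)^{4k}\big]$. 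I would begin by establishing this on the $\ge 2k-3$ intervals $I_i\subset I_{\mu,\nu}$ carrying the points $x_i^*$: since $p_i'$ has degree $\le k-2$ and there are at least $k-1$ such intervals, one can recover $p_i'$ on any one of them from its values (or from Markov/Remez-type bounds using \ineq{newin2} at the $x_i^*$) — this is where the ``$2k-3$'' is used, roughly $2(k-1)-1$ points being needed to pin down a degree $\le k-2$ polynomial in a stable way across neighbouring intervals whose lengths are comparable by \ineq{rho}.

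Next I would set up the induction/telescoping that moves from an interval where the bound is known to an adjacent interval $I_{j\pm1}$. The key mechanism is \lem{important}-style reasoning in reverse: from $b_{i,j}(S,\phi)\le b_k(S,\phi)\le 1$ and the definition \ineq{bij} we get $\|p_i-p_j\|_{I_i}\le\phi(h_j)(h_{i,j}/h_j)^k$, and since $p_i-p_j$ is a polynomial of degree $\le k-1$, differentiating and applying a Markov inequality on $I_i$ (whose length is $h_i\sim h_j$ when $|i-j|\le 1$, by \ineq{rho}) yields $\|p_i'-p_j'\|_{I_i}\le c(k)\,h_i^{-1}\phi(h_j)(h_{i,j}/h_j)^k$. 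Combined with the comparabilities $h_{i\pm1}<3h_i$ and the fact that $h_{i,j}/h_j$ stays bounded for adjacent intervals, this shows that passing from $I_j$ to $I_{j\pm1}$ increases the normalized derivative bound by at most a bounded multiplicative factor plus a bounded additive term. Iterating this $|j-\mu|$ (or $|j-\nu|$) times and bookkeeping the powers of $h_{i,j}/h_j\le c|i-j|^2$ (by \ineq{alleq}, which gives $h_{i,j}/h_j\le (|i-j|+1)^2$) produces the polynomial growth $(j-\mu)^{4k}+(j-\nu)^{4k}$: roughly, each of the $k$ factors of $h_{i,j}/h_j$ contributes a square, and summing a geometric-type series of such terms over the chain of intervals keeps the degree at $4k$.

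The main obstacle I anticipate is the base case rather than the propagation: extracting from the hypothesis \ineq{newin2} — which only bounds $|S'(x_i^*)|$ at one interior point $x_i^*$ per interval — a genuine $L_\infty$ bound for $p_i'$ on a full interval $I_i$, uniformly over the configuration. One must argue that among $2k-3$ consecutive (or near-consecutive) intervals in $I_{\mu,\nu}$, the sampled values $S'(x_i^*)$, together with the constraint that consecutive pieces differ by a controlled polynomial (again via $b_k(S,\phi)\le 1$), over-determine a degree $\le k-2$ polynomial enough to bound its sup norm on each $I_i$ by $c(k)$ times the right-hand side with $|i-\mu|,|i-\nu|$ small there. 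This is a Remez/divided-difference argument on a non-uniform but ``comparable'' mesh, and getting the constant to depend only on $k$ (not on the location of $I_{\mu,\nu}$) is the delicate point; it is precisely the role of the Chebyshev-partition estimates \ineq{rho} and \ineq{alleq}. Once the base bound is in hand on enough intervals inside $I_{\mu,\nu}$, the outward telescoping described above finishes the proof, and the final inequality \ineq{infin} follows after converting $h_j$ back to $\rho_n$ via \ineq{rho}.
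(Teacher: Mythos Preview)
Your outward telescoping is where the argument breaks. Passing from $I_i$ to $I_{i\pm 1}$, you must extend the polynomial $p_i'$ (of degree $\le k-2$) from $I_i$ to the adjacent interval of comparable length; for a nontrivial polynomial this costs a multiplicative constant $C_k>1$, not $1$. The recursion you describe is therefore of the form $A_{i\pm 1}\le C_k A_i+c$, and after $m\sim|j-\mu|$ iterations it produces growth of order $C_k^{\,m}$ --- exponential, not polynomial, in $m$. The ``geometric-type series'' you allude to does not stay polynomial, so this route cannot reach \ineq{infin}.

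The paper's argument avoids iteration entirely. For a \emph{fixed} target index $j$, it bounds $|p_j'(x_i^*)|$ at each good point $x_i^*\in I_{\mu,\nu}$ directly: from $b_k(S,\phi)\le 1$ one has $\|p_i'-p_j'\|_{I_i}\le c\,h_i^{-1}\phi(h_j)(h_{i,j}/h_j)^k$, and since $|p_i'(x_i^*)|=|S'(x_i^*)|$ is small by \ineq{newin2}, this yields $|p_j'(x_i^*)|\le c\,h_j^{-1}\phi(h_j)(|i-j|+1)^{2k+2}$ after using $h_j^2\le c\,h_i h_{i,j}$ and $h_{i,j}/h_j\le c(|i-j|+1)^2$ from \ineq{alleq}. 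The role of the number $2k-3$ is then not Remez-type over-determination as you suggest: among $\ge 2k-3$ good intervals one can select $k-1$ points $x_{i_1}^*,\dots,x_{i_{k-1}}^*$ with any two separated by at least one full interval of $I_{\mu,\nu}$, so that each Lagrange denominator $|x_{i_l}^*-x_{i_m}^*|$ dominates a full $h$. Writing the degree-$(\le k-2)$ polynomial $p_j'$ in Lagrange form at these $k-1$ nodes and estimating each basis factor by $|x-x_{i_m}^*|/|x_{i_l}^*-x_{i_m}^*|\le c\,h_{j,i_m}/h_{i_m}\le c\big((j-\mu)^2+(j-\nu)^2\big)$ for $x\in I_j$ gives \ineq{infin} for every $j$ at once --- with no separate base case and no outward propagation.
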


\begin{proof}
Clearly, it is enough to prove the lemma for $k\geq 2$ since \ineq{infin}  is trivial if $k=1$. Fix $1\leq j\leq n$.
Since every polynomial piece of $S$ has degree $\leq k-1$, it follows from \ineq{newin1} that, for every $1\leq i \leq n$,
\[
\norm{p_i'-p_j'}{I_i} \leq c h_i^{-1}
\norm{p_i-p_j}{I_i}
 \leq c  h_i^{-1} \phi(h_j) \left(\frac{h_{i,j}}{h_j}\right)^k .
\]
Thus, using $h_j^2 \leq c h_i h_{i,j}$, that follows from \ineq{rho1},
and $\phi(h_i) \leq\phi(h_j) \left({h_{i,j}}/{h_j}\right)^k$,
we have, for $x_i^*\in (x_i, x_{i-1})$ for which \ineq{newin2} holds,
\begin{eqnarray*}
|p_j'(x_i^*)| &\leq & c h_i^{-1} \phi(h_j) \left(\frac{h_{i,j}}{h_j}\right)^k + \rho_n^{-1}(x_i^*)\phi  (\rho_n(x_i^*)) \\
& \leq &  c h_i^{-1} \left( \phi(h_j) \left(\frac{h_{i,j}}{h_j}\right)^k + \phi(h_i) \right)
 \leq
 c h_i^{-1}   \phi(h_j) \left(\frac{h_{i,j}}{h_j}\right)^k \\
 &\leq &
   c h_j ^{-1}   \phi(h_j)\left(\frac{h_{i,j}}{h_j}\right)^{k+1} .
\end{eqnarray*}
Since \ineq{alleq} implies that
\[
\frac{h_{i,j}}{h_j} \leq c \left( |i-j| +1 \right)^2 ,
\]
we conclude that
\[
|p_j'(x_i^*)| \leq c   h_j ^{-1}   \phi(h_j)\left( |i-j| +1 \right)^{2k+2} .
\]
We now use the fact that there are $k-1$ points $(x_{i_l}^*)_{l=1}^{k-1}$ with any two of them separated by at least one interval $I_i \subset I_{\mu,\nu}$.

For any  $x\in (x_j, x_{j-1})$, we represent $p_j'$ (which is a polynomial of degree $\leq k-2$) as
\[
p_j'(x) = \sum_{l=1}^{k-1} p_j'(x_{i_l}^*) \prod_{1\leq m\leq k-1, m\neq l} \frac{x-x_{i_m}^*}{x_{i_l}^*-x_{i_m}^*} ,
\]
estimate
\[
\left| \frac{x-x_{i_m}^*}{x_{i_l}^*-x_{i_m}^*} \right| \leq c \frac{h_{j, i_m}}{h_{i_m}} \leq c \left(|j-i_m|+1\right)^2 \leq c \left( (j-\mu)^2 + (j-\nu)^2 \right),
\]
and obtain
\begin{eqnarray*}
 \rho_n(x) \phi^{-1} (\rho_n(x)) |S'(x)|  & \leq  &  c h_j  \phi^{-1} (h_j)) |p_j'(x)| \\
 &\leq &
 c \sum_{l=1}^{k-1} \left( |j-i_l|+1 \right)^{2k+2} \left( (j-\mu)^2 + (j-\nu)^2 \right)^{k-2} \\
 &\leq &
 c \left( (j-\mu)^2 + (j-\nu)^2 \right)^{2k-1} ,
\end{eqnarray*}
which implies \ineq{infin}.
\end{proof}

\begin{theorem}\label{step111} Let  $k,r\in\N$, $k\geq r+1$,  and let $\phi\in\Phi^k$ be of the form $\phi(t):=t^r\psi(t)$, $\psi\in\Phi^{k-r}$. Also, let  $d_+\ge0$, $d_-\ge0$ and $\alpha\ge0$  be given. Then there is a number $\NN=\NN(k,r,\phi,d_+,d_-,\alpha)$  satisfying the following assertion. If  $n\ge \NN$ and $S\in\Sigma_{k,n}\cap \C[-1,1] \cap\Delta^{(1)}$ is such that
\be\label{d1}
b_k(S,\phi) \le 1,
\ee
and, additionally,
\begin{eqnarray}
\label{d+}
\text{if $d_+>0$, then }  & &  d_+|I_2|^{r-1}\le\min_{x\in I_2}S'(x), \\
\label{d+is0}
\text{if $d_+=0$, then }  & &  S^{(i)}(1) =0, \; \text{for all }\;  1\leq i \leq k-2,    \\
\label{d-}
\text{if $d_->0$, then }  & & d_-|I_{n-1}|^{r-1}\le\min_{x\in I_{n-1}}S'(x), \\
\label{d-is0}
\text{if $d_-=0$, then }  & &  S^{(i)}(-1) =0, \; \text{for all }\;  1\leq i \leq k-2,
\end{eqnarray}
 then there exists  a polynomial
$P\in\Delta^{(1)}\cap\Poly_{Cn}$ satisfying, for all $x\in[-1,1]$,
\begin{eqnarray}
\label{approx10}
 & |S(x)-P(x)|\le C\,\delta_n^{\alpha}(x)\phi(\rho_n(x)),   &  \text{if $d_+>0$ and $d_->0$,}   \\
 \label{newapprox10}
 & |S(x)-P(x)|\le C\,  \delta_n^{\min\{\alpha, 2k-2\}} (x)   \phi(\rho_n(x)),   &  \text{if $\min\{d_+, d_-\} = 0$.}
\end{eqnarray}

\end{theorem}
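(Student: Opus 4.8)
The strategy is to decompose $S$ into a "main part" whose derivative is bounded below by a controllable positive quantity (so that it can be handled by a monotone approximation argument in the spirit of \lem{stepeleven}), plus correction terms localized near the endpoints that account for the behavior governed by $d_\pm$, and then to approximate each piece by a polynomial and reassemble. First I would split $[-1,1]$ into a central region, where the hypothesis \ineq{d1} together with \lem{newlemmasec10} gives polynomial-in-$|j-\mu|$ control of $\rho_n\phi^{-1}(\rho_n)|S'|$ on each $I_j$ (so $S$ has "small" derivative there in the sense needed for \lem{stepeleven}), and two endpoint regions of $O(1)$ many intervals near $x=\pm 1$, where the lower bounds \ineq{d+}, \ineq{d-} (or the flatness conditions \ineq{d+is0}, \ineq{d-is0}) are in force. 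The central part is approximated directly using \lem{uncon} (simultaneous approximation of $S$ and its derivatives), which yields a polynomial $D_{n_1}(\cdot,S)$ satisfying an estimate like \ineq{7.23'}; this $D_{n_1}$ need not be monotone, but its derivative is small, and this is precisely what \lem{stepeleven} and \lem{QM} are designed to repair.

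**Key steps.** (1) Using \lem{newlemmasec10} with $\mu,\nu$ chosen so that $I_{\mu,\nu}$ sits away from the endpoints, obtain $\norm{\rho_n\phi^{-1}(\rho_n)S'}{L_\infty(I_j)}\le c[(j-\mu)^{4k}+(j-\nu)^{4k}]$ for all $j$; since $\NN$ may depend on all the data, one can absorb these polynomial factors at the cost of taking $n$ large, trading $\delta_n^\alpha$ for $\delta_n^{\alpha}$-type weights via the extra powers of $\psi_j$ available in Lemmas~\ref{lem5}--\ref{QM}. (2) Construct a nondecreasing piecewise polynomial $\widetilde S$ that agrees with $S$ except possibly on $I_1\cup I_2\cup I_{n-1}\cup I_n$, where we replace $S$ by a suitable polynomial piece with the same one-sided derivative data, so that $\widetilde S'$ is genuinely bounded below by a positive multiple of $\phi(\rho_n)/\rho_n$ on the first and last intervals when $d_\pm>0$, and vanishes to the right order at $\pm1$ when $d_\pm=0$. (3) Apply \lem{uncon} to $\widetilde S$ to get $P_0:=D_{n_1}(\cdot,\widetilde S)$ with $|\widetilde S-P_0|\le C\delta_n^\gamma\phi(\rho_n)$ and control of $\rho_n P_0'/\phi(\rho_n)$ away from the knots. (4) Add a correction $Q_n$ from \lem{QM} — one for each endpoint region, with $E,J$ chosen so that $J$ is the block of intervals where $\widetilde S'$ is "small" (possibly negative) and $E\setminus J$ carries the regions where $\widetilde S'$ is bounded below — so that $P:=P_0+\kappa(Q_n^{+}+Q_n^{-})$ has $P'\ge0$ on all of $[-1,1]$ by \ineq{Qu}--\ineq{Qu1}, while \ineq{Qu2} keeps $|Q_n|\le C\delta_n^\alpha\phi(\rho_n)$. (5) When $\min\{d_+,d_-\}=0$, the flatness condition forces the endpoint interval to contribute only $\delta_n^{2k-2}$-order error (this is where the $2k-2$ exponent in \ineq{newapprox10} enters), because near $\pm1$ one controls $S$ by Taylor expansion using $S^{(i)}(\pm1)=0$, $1\le i\le k-2$, and $|I_1|\sim n^{-2}$, $\delta_n\sim n\varphi$.

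**Main obstacle.** The hard part is the bookkeeping of the monotonicity repair near the endpoints: $P_0=D_{n_1}(\cdot,\widetilde S)$ has a derivative that may be slightly negative (on the order of $-\phi(\rho_n)/\rho_n$) near knots and especially near $\pm1$, and the correction polynomials $Q_n$ must simultaneously (a) overpower this negativity on $J\cup([-1,1]\setminus E)$, (b) not destroy monotonicity on $E\setminus J$ where $\widetilde S'$ is already positive but possibly small, and (c) stay within the target error $C\delta_n^\alpha\phi(\rho_n)$ everywhere. Balancing the ratio $m_E/m_J$ in \ineq{Qu} against the error bound \ineq{Qu2} (which grows like $m_E^{k+3}$) while simultaneously handling the two genuinely different cases $d_\pm>0$ versus $d_\pm=0$ — in the latter the lower bound on $\widetilde S'$ near the endpoint is replaced by a vanishing-to-order-$(k-1)$ condition, changing which intervals belong in $J$ — is the delicate combinatorial core of the argument, and it is exactly here that the freedom to let $\NN$ depend on $f$ (hence on $\phi,d_+,d_-,\alpha$) is used, since one needs $n$ large enough that the finitely many endpoint intervals are "deep" in the Chebyshev partition and the weight factors $\delta_n,\psi_j$ provide the needed slack.
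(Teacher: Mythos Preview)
Your outline misses the central mechanism of the paper's proof, and the gap is not merely bookkeeping.

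In step~(1) you propose to invoke \lem{newlemmasec10} ``with $\mu,\nu$ chosen so that $I_{\mu,\nu}$ sits away from the endpoints'' and then conclude global control of $\rho_n\phi^{-1}(\rho_n)|S'|$. But \lem{newlemmasec10} has a hypothesis you never verify: the interval $I_{\mu,\nu}$ must contain $2k-3$ intervals $I_i$ carrying points $x_i^*$ with $\rho_n(x_i^*)\phi^{-1}(\rho_n(x_i^*))|S'(x_i^*)|\le 1$. Nothing in $b_k(S,\phi)\le 1$ forces such points to exist anywhere. Indeed, for $S(x)=Mx$ one has $b_k(S,\phi)=0$ while $\rho_n\phi^{-1}(\rho_n)|S'|=M\rho_n/\phi(\rho_n)$ is uniformly large; so \ineq{newin2} simply fails for every $I_i$. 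Moreover, even when it holds on some $I_{\mu,\nu}$, the conclusion \ineq{infin} gives a bound that \emph{grows} like $(j-\mu)^{4k}$, so ``taking $n$ large'' does not absorb anything: the range of $j$ grows with $n$.

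The paper's proof is organized around exactly this obstacle. It introduces the set $UC$ of indices $j$ at which \ineq{newin2} (essentially) holds, groups the $I_j$ into blocks $E_q$ of fixed size $C_3$, and calls a block ``good'' if it contains at least $2k-3$ intervals from $UC$ --- so that \lem{newlemmasec10} applies \emph{locally} there. This produces a decomposition $S=S_3+S_4$ in which $S_3'$ is globally small (so \lem{stepeleven} handles $S_3$) and $S_4'$ is supported on the ``bad'' set $F$, where $S'$ is genuinely large and positive on most $I_j$; the few intervals in each bad block where $S'$ is small then form the set $J_q$ of \lem{QM}, and the counting $m_{J_q}\le 2k-2<C_3/4$ is exactly what makes \ineq{Qu}--\ineq{Qu1} usable. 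The resulting correction $\overline Q_n+M_n$ is a sum over all bad blocks throughout $[-1,1]$, not just an endpoint gadget as you suggest in step~(4). Your scheme --- apply $D_{n_1}$ to (essentially) all of $S$, then repair with one $Q_n$ at each endpoint --- provides no mechanism to control $D_{n_1}'$ on interior regions where $S'$ happens to be small, and no way to satisfy the hypothesis $m_J<m_E/4$ of \lem{QM} since you have not identified which intervals belong in $J$. The dependence of $\NN$ on $d_\pm$ and $\psi$ is used only to force $E_1,E_{n_0}\subset F$ when $d_\pm>0$ (so that $S_3'\equiv 0$ on $I_1\cup I_n$); it does not rescue the global step~(1).
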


\begin{proof}
Throughout the proof, we fix $\beta :=k+6$ and   $\gamma:=60(\alpha+\beta)+4k+1$. Hence, the constants $C_1,\dots,C_6$ (defined below)  as well as the constants $C$, may depend only on $k$ and $\alpha$. Note that $S$ does not have to be differentiable at the Chebyshev knots $x_j$. Hence, when we write $S'(x)$ (or $S_i'(x)$, $1\leq i \leq 4$) everywhere in this proof, we implicitly assume that $x\neq x_j$, $1\leq j\leq n-1$. Also, recall   that  $\rho := \rho_n(x)$ and $\delta := \delta_n(x)$.

Let $C_1 :=C$, where the constant   $C$ is taken from \ineq{Qu} (without loss of generality we assume that $C_1\leq 1$), and let $C_2:=C$  with $C$ taken from \ineq{7.24'} with $q=1$.
 We also fix an integer $C_3$ such that
\be\label{8.4}
C_3\ge 8k/C_1 .
\ee
Without loss of generality, we may assume that  $n$ is divisible by $C_3$, and put $n_0:=  n/C_3$.

We divide $[-1,1]$ into $n_0$ intervals
\[
E_q:=[x_{qC_3},x_{(q-1)C_3}]=I_{qC_3}\cup\dots\cup I_{(q-1)C_3+1}, \quad 1\leq q\leq n_0 ,
\]
consisting of $C_3$ intervals $I_i$ each (\ie $m_{E_q} = C_3$, for all $1\leq q\leq n_0$).

We   write ``$j\in UC$'' (where ``$UC$'' stands for ``\underline{U}nder \underline{C}ontrol")  if there is
$x_j^*\in(x_j,x_{j-1})$  such that
\be\label{8.5}
S'(x_j^*)\le \frac{5C_2\phi(\rho_n(x_j^*))}{\rho_n(x_j^*)} .
\ee

We   say that $q\in G$ (for ``\underline{G}ood), if the interval $E_q$ contains at least $2k-3$
intervals $I_j$ with $j\in UC$.
Then,   \ineq{8.5} and \lem{newlemmasec10}
imply that,
\be\label{8.7}
S'(x)\le \frac{C\phi(\rho)}{\rho},\quad x\in E_q , \; q\in G.
\ee
Set
\[
E:=\cup_{q\notin G}E_q,
\]
and decompose $S$ into a ``small" part and a ``big" one, by setting
$$
s_1(x):=\begin{cases}  S'(x),&\quad\text{if}\quad x\notin E,\\
0,&\quad\text{otherwise},\end{cases}
$$
and
\[
s_2 (x):=S'(x)-s_1(x) =
\begin{cases}
0,&\quad\text{if}\quad x\notin E,\\
 S'(x),&\quad\text{otherwise},
 \end{cases}
\]
and putting
\[
S_1(x) :=S(-1)+\int_{-1}^xs_1(u)du \andd
S_2(x) :=\int_{-1}^xs_2(u)du.
\]
(Note that $s_1$ and $s_2$ are well defined for $x\neq x_j$,
$1\le j\le n-1$, so that $S_1$ and $S_2$ are well defined
everywhere and possess  derivatives   for $x\neq x_j$,
$1\le j\le n-1$.)

Evidently,
$$
S_1,S_2\in\Sigma_{k,n},
$$
and
\[
S'_1(x)\ge0\,\text{ and }\,S'_2(x)\ge0, \quad x\in[-1,1].
\]
Now, \ineq{8.7} implies that
\[
S'_1(x)\le \frac{C\phi(\rho)}{\rho},\quad x\in[-1,1],
\]
which, in turn, yields by \lem{important},
\[
b_k(S_1, \phi)\le C.
\]
Together with \ineq{d1}, we obtain
\be\label{8.8}
b_k(S_2, \phi)\le b_k(S_1, \phi) + b_k(S, \phi) \le
C+1\le\lceil C+1\rceil=:C_4.
\ee

The set $E$ is a union of disjoint intervals $F_p=[a_p,b_p]$,
between any two of which, all intervals $E_q$ are with $q\in G$.
We may assume that $n>C_3C_4$, and write $p\in AG$ (for ``\underline{A}lmost
\underline{G}ood"), if $F_p$ consists of no more than $C_4$ intervals $E_q$,
 that is, it consists of no more than $C_3C_4$ intervals
$I_j$. Hence, by \lem{newlemmasec10} (with $\mu$ and $\nu$ chosen so that $I_{\mu,\nu}$ is the union of such an interval $F_p$, $p\in AG$,  and one of the adjacent intervals $E_q$ with $q\in G$),
\be\label{4.7}
S_2'(x)\le\frac {C\,\phi(\rho)}{\rho},\quad x\in F_p,\;  p\in AG.
\ee

One may   think of intervals $F_p$, $p\not\in AG$, as  ``long'' intervals where $S'$  is   ``large'' on many subintervals $I_i$  and  rarely dips down to $0$.
Intervals $F_p$, $p \in AG$, as well as all intervals $E_q$ which are not contained in any $F_p$'s (\ie all ``good'' intervals $E_q$)  are where $S'$ is ``small' in the sense that the inequality $ S'(x)\le  {C\phi(\rho)}/{\rho}$ is valid there.

Set
\[
F:=\cup_{p\notin AG}F_p,
\]
note that $E =  \cup_{p\in AG}F_p \cup F$,
and decompose $S$ again by setting
$$
s_4:=\begin{cases} S'(x),&\quad\text{if}\quad x\in F,\\
0,&\quad\text{otherwise},
\end{cases}
$$
and
\[
s_3(x) := S'(x)-s_4(x) =
\begin{cases} 0,&\quad\text{if}\quad x\in F,\\
S'(x),&\quad\text{otherwise},
\end{cases}
\]
and putting
\be\label{s34}
S_3(x) :=S(-1)+\int_{-1}^xs_3(u)du \andd
S_4(x) :=\int_{-1}^xs_4(u)du.
\ee
Then, evidently,
\be\label{8.9}
S_3,S_4\in\Sigma_{k,n}, \quad S_3+S_4 = S,
\ee
and
\be\label{8.10}
S'_3(x)\ge0\,\text{ and }\,S'_4(x)\ge0, \quad x\in[-1,1].
\ee

We remark that, if $x\not\in  \cup_{p\in AG} F_p$, then $s_1(x)=s_3(x)$ and $s_2(x)=s_4(x)$. If
$x \in  \cup_{p\in AG} F_p$, then $s_1(x)=s_4(x)=0$ and $s_2(x)=s_3(x)=S'(x)$.

For $x\in\cup_{p\in AG} F_p$, \ineq{4.7} implies that
$$
S_3'(x)=S_2'(x)\le\frac {C\,\phi(\rho)}{\rho}.
$$
For all other $x$'s,
$$
S_3'(x)=S_1'(x)\le\frac {C\,\phi(\rho)}{\rho}.
$$
We conclude that
\be\label{8.12}
S_3'(x)\le \frac {C_5\,\phi(\rho)}{\rho},\quad x\in[-1,1],
\ee
which by virtue of   \lem{important}, yields that $b_k(S_3,\phi)\le C$. As above, we obtain
\be\label{8.13}
b_k(S_4,\phi)  \leq b_k(S_3,\phi) + b_k(S,\phi) \le C+1\le\lceil C+1\rceil=:C_6.
\ee

We will approximate $S_3$ and $S_4$ by nondecreasing polynomials that achieve the required degree of pointwise approximation.

{\bf Approximation of $S_3$:}

If $d_+>0$, then there exists $\NN^*\in\N$, $\NN^* = \NN^* (d_+, \psi)$, such that, for $n> \NN^*$,
\[
\frac {\phi(\rho)}{\rho}=\rho^{r-1}\psi(\rho)<\frac{d_+|I_2|^{r-1}}{C_5}\le C^{-1}_5\,S'(x),\quad x\in I_2,
\]
where the first inequality follows since $\psi(\rho) \leq \psi(2/n) \to0$  as $n\to\infty$, and the second inequality follows by \ineq{d+}.
Hence, by \ineq{8.12}, if $n>\NN^*$, then $s_3(x)\ne S'(x)$ for $x\in I_2$.  Therefore, since $s_3(x)=S'(x)$, for all $x\notin F$, we conclude that $I_2\subset F$, and so   $E_1\subset F$,
and $s_3(x)=0$, $x\in E_1$. In particular, $s_3(x)\equiv0$, $x\in I_1$.

Similarly, if $d_->0$, then  using \ineq{d-} we conclude that there exists $\NN^{**}\in\N$, $\NN^{**} = \NN^{**}(d_-,\psi)$, such that, if  $n>\NN^{**}$, then $s_3(x)\equiv0$ for all $x\in I_n$.

Thus, when both $d_+$ and $d_-$ are strictly positive, we conclude that for $n\ge\max\{\NN^*,\NN^{**}\}$, we have
\be \label{endpoints}
s_3(x)=0,\quad \mbox{\rm for all }\; x\in I_1\cup I_n.
\ee

Therefore, in view of \ineq{8.9} and \ineq{8.10}, it follows by   \lem{stepeleven} combined with \ineq{8.12} that, in the case $d_+>0$ and $d_->0$, there exists
a nondecreasing polynomial $r_n \in \Poly_{Cn}$   such that
\be\label{8.14}
|S_3(x)-r_n(x)|\le C\,\delta^{\alpha} \phi(\rho),\quad x\in[-1,1].
\ee

Suppose now that $d_+=0$ and $d_->0$. First, proceeding as above, we conclude that $s_3\equiv 0$ on $I_n$.
Additionally, if $E_1 \subset F$, then, as above,  $s_3\equiv 0$ on  $I_1$ as well. Hence, \ineq{endpoints} holds which, in turn, implies \ineq{8.14}.

 If $E_1 \not\subset F$, then $s_3(x)=S'(x)$, $x\in I_1$, and so it follows
 from  \ineq{d+is0} that, for some constant $c_*\geq 0$,
\[
s_3(x)=S'(x)   =     c_*   (1-x)^{k-2}, \quad x\in I_1 .
\]
By \ineq{8.12}   we conclude that
\[
c_* \le C_5 \frac{\phi(\rho_n(x_1))}{(1-x_1)^{k-2} \rho_n(x_1)} \sim  n^{2k-2} \phi(n^{-2}) .
\]
 Hence, for $x\in I_1$,
 \[
 S_3'(x)=s_3(x) \leq C\left(n \varphi(x) \right)^{2k-4}  n^2 \phi(n^{-2}) \leq C\delta^{2k-4}   \frac{\phi(\rho)}{\rho}
 \]
 and
 \[
0\leq  S_3(1) -  S_3(x) = \int_x^1 s_3(u)\, du \leq c (1-x)^{k-1} n^{2k-2} \phi(n^{-2}) \leq C\delta^{2k-2} \phi(\rho).
 \]
We now define
\[
\widetilde S_3(x) :=
\begin{cases}
S_3(x), & \text{if $x<x_1$}, \\
S_3(1), & \text{if $x\in [x_1,1]$}.
\end{cases}
\]
Then $\widetilde S_3 \in \Sigma_{k,n} \cap \Delta^{(1)}$, $\widetilde S_3'(x) \leq C\rho^{-1}\phi(\rho)$, $x\not\in \{x_j\}_{j=1}^{n-1}$,  and  $\widetilde S_3'\equiv 0$ on $I_1\cup I_n$. Note also that $\widetilde S_3$ may be discontinuous at $x_1$ but the jump is bounded by $\phi(\rho_n(x_1))$ there. Hence, \lem{stepeleven} implies that there exists a nondecreasing polynomial
$r_n \in \Poly_{Cn}$   such that
\[
|\widetilde S_3(x)-r_n(x)|\le C\,\delta^{\alpha} \phi(\rho),\quad x\in[-1,1].
\]
Now, since
\[
\left| \widetilde S_3(x) - S_3(x) \right| \leq C\delta^{2k-2} \phi(\rho), \quad x\in [-1,1] ,
\]
we conclude that
\be \label{tildeaux}
| S_3(x)-r_n(x)|\le C\,\delta^{\min\{\alpha, 2k-2\}} \phi(\rho), \quad x\in[-1,1].
\ee

Finally, if $d_-=0$ and $d_+>0$, then the considerations are completely analogous and, if $d_-=0$ and $d_+=0$, then $\widetilde S_3$ can be modified further on $I_n$ using \ineq{d-is0} and the above argument.

Hence, we've constructed a nondecreasing polynomial $r_n \in \Poly_{Cn}$   such that, in the case when both $d_+$ and $d_-$ are strictly positive, \ineq{8.14} holds, and \ineq{tildeaux} is valid if at least one of these numbers is $0$.

{\bf Approximation of $S_4$:}

Given a set $A\subset  [-1,1]$, denote
\[
A^e:=\cup_{I_j\cap A\ne\emptyset}I_j \andd  A^{2e}:=(A^e)^e,
\]
where $I_0=\emptyset$ and $I_{n+1}=\emptyset$.  For example, $[x_7,x_3]^e=[x_8,x_2]$, $I_1^e=I_1\cup I_2$, etc.

Also, given subinterval  $I\subset [-1,1]$ with its endpoints at the Chebyshev knots, we   refer to the right-most and the left-most intervals $I_i$ contained in $I$ as $EP_+(I)$ and $EP_-(I)$, respectively (for the ``\underline{E}nd \underline{P}oint'' intervals). More precisely, if $1\leq \mu<\nu \leq n$ and
\[
I = \bigcup_{i=\mu}^\nu I_i ,
\]
 then $EP_+(I) := I_\mu$, $EP_-(I) := I_\nu$ and $EP(I):= EP_+(I)  \cup EP_-(I) = I_\mu \cup I_\nu$. For example, $EP_+[-1,1] := I_1$, $EP_-[-1,1] := I_n$, $EP_+[x_7, x_3] = [x_4, x_3] = I_4$, $EP_-[x_7, x_3] = [x_7, x_6] = I_7$,
$EP [x_7, x_3] = I_4\cup I_7$,  etc.
Here, we simplified the notation by using $EP_\pm [a,b] := EP_\pm ([a,b])$ and $EP  [a,b] := EP ([a,b])$.

In order to approximate $S_4$, we observe that for $p\notin AG$,
\[
S_4'(x)=S_2'(x),\quad x\in F^{2e}_p,
\]
so that by virtue of \ineq{8.8}, we conclude that
\be\label{8.16}
b_k(S_4,\phi, F^{2e}_p)=b_k(S_2, \phi, F^{2e}_p)\le b_k(S_2, \phi)\le C_4.
\ee
(Note that, for $p\in AG$, $S_4$ is  constant in $F_p^{2e}$ and so $b_k(S_4,\phi, F_p^{2e})=0$.)

We will approximate $S_4$ using the polynomial $D_{n_1}(\cdot,S_4) \in \Poly_{Cn_1}$  defined  in \lem{uncon}  (with $n_1:=C_6n$), and then we construct
  two ``correcting'' polynomials $\overline Q_n, M_n \in\Poly_{Cn}$ (using  \lem{QM}) in order to make sure that the resulting approximating polynomial is nondecreasing.

We begin with $\overline Q_n$. For each $q$ for which $E_q\subset F$, let
$J_q$ be the union of all intervals $I_j\subset E_q$ with $j\in
UC$ with the union of both intervals $I_j\subset E_q$  at the endpoints of $E_q$.
In other words,
\[
J_q := \bigcup_j \left\{ I_j \st j\in UC \andd  I_j\subset E_q\right\}  \;  \cup \;  EP(E_q) .
\]

Since $E_q\subset F$, then $q\notin G$ and so the number   of
 intervals $I_j\subset E_q$ with $j\in UC$  is at most $2k-4$.
Hence, by \ineq{8.4},
\[
m_{J_q}\le 2k-2<  2k   \le \frac{C_1C_3}4\le\frac{C_3}4,
\]
Recalling that the total number $m_{E_q}$ of intervals $I_j$ in $E_q$ is $C_3$ we conclude that \lem{QM} can be used with $E:= E_q$ and $J:=J_q$.
Thus, set
\[
\overline Q_n:=\sum_{q\, :\; E_q\subset F} Q_n(\cdot,E_q,J_q),
\]
where $Q_n$ are   polynomials from \lem{QM}, and denote
\[
J:=\bigcup_{q\, :\; E_q\subset F}J_q.
\]
Then, \ineq{Qu} through \ineq{Qu2} imply that that $\overline Q_n$ satisfies
\begin{align}\label{8.17}
\text{(a)} \qquad \overline Q_n'(x)&\ge 0,\quad x\in[-1,1]\setminus F,\nonumber\\
\text{(b)} \qquad \overline Q_n'(x)&\ge -   \frac{\phi(\rho)}{\rho}\quad x\in F\setminus
J,\\
\text{(c)} \qquad \overline Q_n'(x)&\ge  4  \frac{ \phi(\rho)}{\rho}\delta^{8\alpha},\quad x\in J.\nonumber
\end{align}
Note that  the inequalities in \ineq{8.17} are valid since, for any given $x$,
all relevant $Q_n'(x,E_q,J_q)$, except perhaps one, are nonnegative, and
\[
C_1 \frac{m_{E_q}}{m_{J_q}} \geq \frac{C_1C_3}{2k} \ge 4.
\]
Also, it follows from \eqref{Qu2} that, for any $x\in[-1,1]$,
\begin{eqnarray} \label{8.20}
|\overline Q_n(x)| & \leq & C\delta^\alpha \rho \phi(\rho)    \sum_{q\, :\; E_q\subset F} \sum_{j: \, I_j\subset  E_q}\frac{h_j}{(|x-x_j|+\rho)^2} \\ \nonumber
& \leq &
C\delta^\alpha \rho \phi(\rho)  \sum_{j=1}^n \frac{h_j}{(|x-x_j|+\rho)^2} \\ \nonumber
& \leq &
C\delta^\alpha \rho \phi(\rho) \int_0^\infty  \frac{du}{(u+\rho)^2}    \\ \nonumber
& = &
C\delta^\alpha \phi(\rho).
\end{eqnarray}

Next, we define the polynomial $M_n$. For each $F_p$ with $p\notin AG$, let $J_p^-$ denote the union of the two intervals on the left
side of $F^e_p$ (or just the interval $I_n$ if $-1\in F_p$), and let $J_p^+$ denote
the union of the two intervals on the right side of
$F^e_p$ (or just one interval $I_1$ if  $1\in F_p$), \ie
\[
J_p^- = EP_-(F^e_p) \cup EP_-(F_p) \andd J_p^+ = EP_+(F^e_p) \cup EP_+(F_p).
\]
Also, let $F_p^-$ and $F_p^+$ be the closed intervals each consisting of $m_{F_p^\pm}:=C_3C_4$
intervals $I_j$ and such that $J_p^-\subset F_p^-\subset F^e_p$ and $J_p^+\subset F_p^+\subset F^e_p$, and put
\[
J_p^*:=J_p^-\cup J_p^+ \andd J^*:=\cup_{p\notin AG}J_p^*.
\]
Now, we set
\[
M_n:=\sum_{p\notin AG}\left(Q_n(\cdot,F_p^+,J_p^+) +
Q_n(\cdot,F_p^-,J_p^-)\right).
\]
Since $m_{F_p^+}=m_{F_p^-}=C_3C_4$ and $m_{J_p^+}, m_{J_p^-} \leq 2$, it follows from \ineq{8.4} that
\[
\min\left\{ \frac{m_{F_p^+}}{m_{J_p^+}} ,\frac{m_{F_p^-}}{m_{J_p^-}} \right\} \ge  \frac{C_1 C_3 C_4}{2}
\ge 2C_4.
\]
Then    \lem{QM} implies
\be\label{8.24}
|M_n(x)|\le C\,\delta^{\alpha}\phi(\rho)
\ee
(this follows from \ineq{Qu2} using the same sequence of inequalities that was used to prove \ineq{8.20} above), and
\begin{align}\label{8.21}
\text{(a)} \quad M_n'(x)&\ge-2\frac{\phi(\rho)}{\rho},\quad x\in F\setminus J^*,\nonumber\\
\text{(b)} \quad M_n'(x)&\ge
2C_4\, \delta^{8\alpha}   \frac{\phi(\rho)}{\rho},\quad x\in J^*,\\
\text{(c)} \quad M_n'(x)&\ge
2C_4\, \delta^{8\alpha}\frac{\phi(\rho)}{\rho}\left(\frac\rho{\dist\,(x,F)}
\right)^{\gamma+1},\; x\in[-1,1]\setminus F^e,\nonumber
\end{align}
where in the last inequality we used the fact that
\[
\max\{\rho,\dist\,(x,F^e)\}\le  \dist\,(x,F),\quad
x\in[-1,1]\setminus F^e,
\]
which follows from \ineq{newauxest}.

The third auxiliary polynomial is
  $D_{n_1}:=D_{n_1}(\cdot, S_4)$ with $n_1 = C_6n$ constructed in \lem{uncon}. By \ineq{8.13}, \ineq{7.23'} yields
\be\label{8.25}
|S_4(x)-D_{n_1}(x)|\le C\,\delta^\gamma \phi(\rho) \le C\,\delta^\alpha \phi(\rho)     ,\quad x\in[-1,1],
\ee
since $\gamma > \alpha$, and \ineq{7.24'}  implies that, for any
interval $A\subset [-1,1]$ having  Chebyshev knots as endpoints,
\begin{align}\label{8.26}
|S_4'(x)-D_{n_1}'(x)|&\le C_2\, \delta^{\gamma} \frac{\phi(\rho)}{\rho}b_k(S_4,\phi, A)\\&\quad+
C_2C_6 \, \delta^{\gamma}\frac{\phi(\rho)}{\rho}\frac n{n_1}
\left(\frac{\rho}{\dist(x,[-1,1]\setminus A)}\right)^{\gamma+1},\quad
x\in A.\nonumber
\end{align}
We now define
\be\label{8.27}
R_n:=D_{n_1}+C_2\overline Q_n+C_2M_n.
\ee
By virtue of \ineq{8.20}, \ineq{8.24}, and \ineq{8.25}  we obtain
$$
|S_4(x)-R_n(x)|\le C\,\delta^{\alpha}\phi(\rho),\quad x\in[-1,1],
$$
which combined with \ineq{8.14} and \ineq{tildeaux}, proves \ineq{approx10} and \ineq{newapprox10} for $P :=R_n+r_n$.

Thus, in order to conclude the proof of   \thm{step111}, we should prove that
$P$ is nondecreasing. We recall that $r_n$ is nondecreasing, so   it is sufficient to show that $R_n$ is nondecreasing as well.

Note that \ineq{8.27} implies
\[
R_n'(x) \ge C_2\overline Q_n'(x)+C_2M_n'(x)
 -|S_4'(x)- D_{n_1}'(x)|+S_4'(x), \quad x\in[-1,1],
\]
(this inequality is extensively used in the three cases below),
and that \ineq{8.26} holds for {\em any} interval $A$ with Chebyshev knots as the endpoints, and so we can use
different intervals $A$ for different points $x\in [-1,1]$.
We consider three cases depending on whether (i) $x\in F\setminus J^*$, or (ii)  $x\in J^*$, or (iii)~$x\in[-1,1]\setminus F^e$.

{\bf Case (i):} If $x\in F\setminus J^*$, then, for some $p\notin AG$,
  $x\in F_p\setminus J_p^*$, and so   we take $A:=F_p$. Then, the quotient
inside the  parentheses in \ineq{8.26} is bounded above by 1 (this follows from \ineq{newauxest}). Also, since   $s_4(x)=S'(x)$, $x\in F$, it follows that
$b_k(S_4,\phi, F_p)=b_k(S,\phi, F_p)\le1$. Hence,
\begin{align}\label{8.28}
|S_4'(x)-D_{n_1}'(x)|&\le  C_2\, \frac{\phi(\rho)}{\rho}b_k(S_4,\phi,F_p)+
C_2C_6 \, \frac{\phi(\rho)}{\rho}\frac n{n_1}\\
& \le
2 C_2 \, \frac{\phi(\rho)}{\rho},\quad x\in F\setminus J^*. \nonumber
\end{align}
Note  that $x\notin I_1\cup I_n$ (since   $F\setminus J^*$ does not contain any intervals in $EP(F_p)$, $p\notin AG$), and so $\delta = 1$.

It now follows by  \ineq{8.17}(c), \ineq{8.21}(a),  \ineq{8.28} and \ineq{8.10}, that
\[
R_n'(x)\ge C_2 \, \frac{\phi(\rho)}{\rho}(4-2-2)=0,\quad
x\in J\setminus J^*.
\]
If $x\in F\setminus(J\cup J^*)$, then \ineq{8.5} is violated and so
\[
S_4'(x) = S'(x) >\frac{5C_2\phi(\rho)}{\rho}.
\]
Hence, by virtue of \ineq{8.17}(b), \ineq{8.21}(a) and \ineq{8.28}, we get
$$
R_n'(x)\ge C_2\, \frac{\phi(\rho)}{\rho}(-1-2-2+5)=0,\quad
x\in F\setminus(J\cup J^*).
$$

{\bf Case (ii):}
If $x\in J^*$, then, $x\in J_p^*$, for some $p\notin AG$, and we take  $A:=F_p^{2e}$. Then,
 \ineq{8.16} and \ineq{8.26} imply (again, \ineq{newauxest} is used to estimate the quotient inside the  parentheses in \ineq{8.26}),
\begin{align}\label{8.29}
|S_4'(x)-D_{n_1}'(x)|&\le
C_2\,\delta^{\gamma} \frac{\phi(\rho)}{\rho}b_k(S_4,\phi, F_p^{2e})+
C_2C_6 \,\delta^{\gamma} \frac{\phi(\rho)}{\rho}\frac n{n_1}\\
&\le
C_2C_4\,\delta^{\gamma} \frac{\phi(\rho)}{\rho},\quad x\in J^*.    \nonumber
\end{align}
Now, we note that $EP(F_p) \subset J$, for all $p\notin AG$, and so $F\cap J^* \subset J$. Hence,
 using   \ineq{8.17}(a,c), \ineq{8.21}(b),
 \ineq{8.29} and \ineq{8.10}, we obtain
\[
R_n'(x)\ge 2 C_2 C_4\, \delta^{8\alpha} \frac{\phi(\rho)}{\rho}   - 2 C_2C_4\,\delta^{\gamma} \frac{\phi(\rho)}{\rho} \geq 0 ,
\]
 since $\gamma > 8\alpha$, and so $\delta^{\gamma} \leq \delta^{8\alpha}$.

{\bf Case (iii):}
If $x\in[-1,1]\setminus F^e$, then we take $A$ to be the
connected component of $[-1,1]\setminus F$ that contains $x$. Then by \ineq{8.26},
\begin{align}\label{8.30}
&|S_4'(x)-D_{n_1}'(x)|\nonumber\\
&\quad\le  C_2\, \delta^{\gamma} \frac{\phi(\rho)}{\rho}b_k(S_4,\phi,A)+
C_2C_6 \, \delta^{\gamma} \frac{\phi(\rho)}{\rho}\frac n{n_1}
\left(\frac\rho{\dist(x,[-1,1]\setminus A)}\right)^{\gamma+1}\\
&\quad = C_2 \, \delta^{\gamma} \frac{\phi(\rho)}{\rho}\left(\frac\rho{\dist(x,
F)}\right)^{\gamma+1},\quad x\in[-1,1]\setminus F^e , \nonumber
\end{align}
where we used the fact that $S_4$ is constant in $A$, and so $b_k(S_4,\phi,A)=0$.

Now, \ineq{8.17}(a), \ineq{8.21}(c), \ineq{8.30} and \ineq{8.10} imply,

\[
R_n'(x)\ge
\frac{\phi(\rho)}{\rho} \left(\frac\rho{\dist(x, F)}\right)^{\gamma+1} \left(
2 C_2 C_4 \delta^{8\alpha}   - C_2 \, \delta^{\gamma}
  \right)\geq 0,
\]
 since $C_4\geq 1$ and $\gamma > 8\alpha$.

Thus, $R_n'(x)\geq 0$ for all $x\in[-1,1]$, and so we have constructed a nondecreasing
polynomial $P$, satisfying \ineq{approx10} and \ineq{newapprox10}, for each $n\ge \NN$.
This completes the proof.
\end{proof}

\sect{ Proof of \thm{thm1}}\label{sec555}

In order to prove   \thm{thm1}, we first approximate $f$ by appropriate piecewise polynomials. To this end we make use, among other things, of the following result on pointwise monotone piecewise polynomial approximation (see \cite{LP}).

\begin{theorem}\label{thm2} Given $r\in\N$, there is a constant $c=c(r)$ with the property that if  $f\in \C^r[-1,1]\cap\mon$, then there is a number $\widetilde \NN=\widetilde \NN(f,r)$, depending on $f$ and $r$, such that for $n\ge \widetilde \NN$, there are nondecreasing continuous piecewise polynomials $S\in\Sigma_{r+2,n}$ satisfying
\be\label{interspline}
|f(x)-S(x)|\le c(r)\left(\frac{\varphi(x)}n\right)^r \omega_2\left(f^{(r)},\frac{\varphi(x)}n\right),\quad x\in[-1,1].
\ee
Moreover,
\be\label{interend}
|f(x)-S(x)|\le c(r)\varphi^{2r}(x) \omega_2\left(f^{(r)}, \frac {\varphi(x)}n\right),\quad x\in[-1,x_{n-1}]\cup[x_{1},1].
\ee
\end{theorem}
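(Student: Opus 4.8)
The plan is to first produce, by working on each Chebyshev interval essentially separately, a continuous nondecreasing piecewise polynomial $S\in\Sigma_{r+2,n}$ that interpolates $f$ at every knot and satisfies \ineq{interspline} away from $\pm1$, and then to redo the two extreme pieces so that \ineq{interspline} and the sharper \ineq{interend} hold there as well. For the first part the observation is that $S$ is continuous as soon as $S'=:T$ has the prescribed integral $\int_{I_j}T=f(x_{j-1})-f(x_j)$ over each $I_j$ (the antiderivative then matches automatically across knots), so $T$ itself need not be continuous: it suffices to build, interval by interval, a \emph{nonnegative} polynomial $T_j\in\Pi_r$ on $I_j$ with $\int_{I_j}T_j=f(x_{j-1})-f(x_j)$ and $\|f'-T_j\|_{I_j}\le c\,h_j^{r-1}\omega_2(f^{(r)},h_j)$, to let $T$ be the piecewise polynomial with $T|_{I_j}=T_j$, and to set $S(x):=f(-1)+\int_{-1}^xT$. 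Then $S\in\Sigma_{r+2,n}\cap\C[-1,1]\cap\mon$ automatically, $S(x_j)=f(x_j)$ for all $j$ (in particular $S(\pm1)=f(\pm1)$), and $|f(x)-S(x)|=\bigl|\int_{x_j}^x(f'-T)\bigr|\le h_j\,\|f'-T_j\|_{I_j}\le c\,h_j^r\omega_2(f^{(r)},h_j)$ on $I_j$; since $h_j\sim\rho_n(x)\sim\varphi(x)/n$ on $I_j$ for $2\le j\le n-1$, this is exactly \ineq{interspline} there. To obtain $T_j$ one starts from a near-best $\Pi_r$-approximant $q_j$ of $f'$ on $I_j$ (Whitney's theorem gives $\|f'-q_j\|_{I_j}\le c\,E_r(f')_{I_j}\le c\,h_j^{r-1}\omega_2(f^{(r)},h_j)$), adds the constant $h_j^{-1}\int_{I_j}(f'-q_j)$ to restore the integral, and, on those intervals where the result still dips below $0$, replaces it by a nonnegative polynomial of comparable norm and the correct integral; this is possible precisely because such a dip forces $\min_{I_j}f'$ to be comparable to $h_j^{r-1}\omega_2(f^{(r)},h_j)$.

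Two features of this last step force $n$ to be large in terms of $f$, which is why the statement is false with $\widetilde\NN$ independent of $f$ (cf.\ \ineq{glswineq}). First, keeping $T$ nonnegative at the full rate requires the partition to be fine relative to the closed set on which $f'$ is small, and that set depends only on $f$. Second, and more decisively, \ineq{interspline} and \ineq{interend} force $|f-S|$ to vanish at $\pm1$ strictly faster than $\rho_n^r\omega_2(f^{(r)},\rho_n)$ (which is $\sim n^{-2r}\omega_2(f^{(r)},n^{-2})$ near the endpoints and does not vanish there), so the two extreme pieces cannot come from the generic construction above and must be constructed by hand.

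Accordingly, on $I_1=[x_1,1]$ --- where the generic bound is only $|f(x)-S(x)|\le(1-x)\|f'-T_1\|_{I_1}$, too large as $x\to1$ --- I would take $p_1:=S|_{I_1}$ to be a polynomial of degree $\le r+1$ that osculates $f$ at $1$ to the maximal admissible order, $p_1^{(i)}(1)=f^{(i)}(1)$ for $0\le i\le r$; the one remaining coefficient is chosen so that $p_1^{(r)}$ is the near-best linear approximation of $f^{(r)}$ on $I_1\cup I_2$, normalized to pass through $\bigl(1,f^{(r)}(1)\bigr)$ --- equivalently, $p_1$ is the near-best $\Pi_{r+1}$-approximant of $f$ on $I_1\cup I_2$ corrected by the degree-$r$ Taylor difference of $f$ at $1$, or simply the degree-$(r+1)$ Taylor polynomial of $f$ at $1$ when $f$ is that smooth. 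A final adjustment of size $O\bigl(h_1^r\omega_2(f^{(r)},h_1)\bigr)$, admissible once $n\ge\widetilde\NN$, restores continuity with $p_2$ at $x_1$ and $p_1'\ge0$ on $I_1$. Taylor's formula with integral remainder then gives $|f(x)-p_1(x)|\le\dfrac{(1-x)^r}{r!}\sup_{s\in[x,1]}\bigl|f^{(r)}(s)-p_1^{(r)}(s)\bigr|$, and the crux is to bound this supremum by $c(r)\,\omega_2\bigl(f^{(r)},\varphi(x)/n\bigr)$: here one uses that $1-x\le\pi^2/(2n^2)$ throughout $I_1$, so that $\varphi(x)/n$ is comparable to the geometric mean $\sqrt{(1-x)(1-x_1)}$ of the length of $[x,1]$ and of $I_1$, together with a divided-difference (Whitney-type) estimate for the constrained linear approximant over the dyadic scales between $1-x$ and $1-x_1$. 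Since $\varphi^{2r}(x)\sim(1-x)^r$ on $I_1$, this yields \ineq{interend}, and $I_n$ is handled symmetrically; as the two surgeries touch only $I_1$ and $I_n$, continuity, monotonicity, and \ineq{interspline} elsewhere are untouched.

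I expect the real obstacle to be this last estimate on $I_1$: a single polynomial of degree $\le r+1$ must osculate $f$ at $1$ --- which is forced, since the right-hand side of \ineq{interend}, and with it the relevant derivatives of $f-S$, vanish at $1$ --- and yet leave a residual controlled by the second modulus of $f^{(r)}$ at the scale $\varphi(x)/n$, which can be far larger than the length $1-x$ of the interval $[x,1]$ on which that residual lives. Reconciling these two requirements is exactly where the Chebyshev spacing near $\pm1$ must be exploited, and, when it is not enough, where $\widetilde\NN(f,r)$ must be enlarged; keeping the construction monotone through the integral corrections and the endpoint surgery is the secondary difficulty.
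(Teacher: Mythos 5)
You should first note that the paper does not actually prove Theorem~\ref{thm2}: it is quoted from Leviatan--Petrova \cite{LP}, and the paper only recalls the shape of that construction (Lagrange--Hermite pieces of degree $\le r+1$ attached at $\pm1$, interior monotone interpolatory pieces built from \cite{LS}*{Lemma 2}), then upgrades it to Lemma~\ref{spline}. Your architecture matches this known construction, and your interior step is essentially sound in outline, although your stated justification is not a proof: a dip of the corrected approximant below zero only shows $\min_{I_j}f'\le c\,h_j^{r-1}\omega_2(f^{(r)},h_j)$, which by itself does not produce a nonnegative $T_j\in\Poly_r$ with the exact integral and the right error. This can be repaired cheaply (e.g.\ replace the Whitney approximant $q$ of $f'$ by $\theta\,(q+2\|f'-q\|_{I_j})$ with $\theta\in[0,1]$ chosen to restore the integral, and use that a nonnegative polynomial of degree $\le r$ satisfies $\|p\|_{I_j}\le c(r)|I_j|^{-1}\int_{I_j}p$), and, importantly, no largeness of $n$ is needed there; your claim that the interior construction is one source of the $f$-dependence of $\widetilde\NN$ is not correct --- that dependence comes only from the endpoint pieces.

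The genuine gap is precisely at the endpoint pieces. Once you impose $p_1^{(i)}(1)=f^{(i)}(1)$ for $0\le i\le r$ and continuity $p_1(x_1)=f(x_1)$, a polynomial of degree $\le r+1$ has no free parameters left, so monotonicity of $p_1$ must be \emph{proved}, not ``restored by a final adjustment''; and no small additive adjustment can do the job anyway. Indeed, the possible dip of $p_1'$ near $x_1$ is of size $h_1^{r-1}\omega_2(f^{(r)},h_1)$, so an admissible polynomial correction would need a coefficient of order $\omega_2(f^{(r)},h_1)/h_1$ on $(1-x)^{r}$ (or $\omega_2(f^{(r)},h_1)$ on $(1-x)^{r-1}$ at the level of derivatives), and close to $x=1$ such a correction exceeds the budget $c\,(1-x)^r\omega_2\bigl(f^{(r)},\sqrt{1-x}/n\bigr)$ allowed by \ineq{interend} (already when $\omega_2(f^{(r)},t)\asymp t^{2}$, and the derivative comparison fails when $\omega_2(f^{(r)},t)\asymp t^{\beta}$, $\beta<2$); moreover $f'$ may vanish at $1$ to arbitrarily high order, so there is no positive margin of $f'$ to absorb a perturbation. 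What is actually needed --- and what \cite{LP} proves, recorded here as Lemma~\ref{spline}, \ineq{hermite}--\ineq{der2} with the constants $D_\pm(r,f)$ --- is a direct verification that the fully determined Hermite piece is nondecreasing for $n\ge\NN(f,r)$: if some $f^{(i)}(\pm1)\ne0$, the first nonvanishing Taylor term dominates the remaining terms and the interpolation coefficient $a_\pm$ once $n$ is large (this is exactly where $\NN$ depends on $f$); if all these derivatives vanish, the interpolation condition itself forces the sign of $a_\pm$ and hence monotonicity. In addition, the pointwise bound on $I_1\cup I_n$ in terms of $\omega_2(f^{(r)},\varphi(x)/n)$ must be established for this pinned coefficient (not for a near-best choice that you later move); you yourself flag this as ``the real obstacle'' and leave it unproven --- it is the other substantive half of \cite{LP}. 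As it stands, therefore, the proposal is a correct blueprint of the known proof but does not prove the theorem: the monotonicity mechanism you propose at $\pm1$ would fail, and the endpoint error estimate is missing.
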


As was shown in \cite{LP}, near $\pm 1$, polynomial pieces of the spline $S$ from the statement of \thm{thm2} can be taken to be Lagrange-Hermite polynomials of degree $\leq r+1$. Namely,
\[
S\big|_{[x_2,1]}(x)=f(1)+\frac{f'(1)}{1!}(x-1)+\dots+\frac{f^{(r)}(1)}{r!}(x-1)^r+a_+(n;f)(x-1)^{r+1}
\]
and
\[
S\big|_{[-1,x_{n-2}]} (x)=f(-1)+\frac{f'(-1)}{1!}(x+1)+\dots+\frac{f^{(r)}(-1)}{r!}(x+1)^r+a_-(n;f)(x+1)^{r+1},
\]
where constants $a_+(n,f)$ and $a_-(n,f)$   depend  only on $n$ and $f$, and are chosen so that $S(x_2)=f(x_2)$ and $S(x_{n-2})=f(x_{n-2})$.
It was shown in \cite{LP}*{(3.1)} that
\[
|a_+(n,f)|\le\frac1{r!\left(|I_1|+|I_2|\right)}\omega_1(f^{(r)},|I_1|+|I_2|,I_1\cup I_2)
\]
and
\[
|a_-(n,f)|\le\frac1{r!\left(|I_{n-1}|+|I_n|\right)}\omega_1(f^{(r)},|I_{n-1}|+|I_n| ,I_{n-1} \cup I_n).
\]

On $I_{j}$'s with $j\neq 1,2,n-1,n$,   polynomial pieces $p_{j}$ of $S$ were constructed  using  \cite{LS}*{Lemma 2, p. 58}.

For $f\in \C^r[-1,1]$, let $i_+\geq 1$, be the smallest integer $1\leq i \leq r$, if it exists, such that $f^{(i)}(1)\neq 0$, and denote
\[
D_+(r,f) :=
\begin{cases}
(2   r!)^{-1} |f^{(i_+)}(1)|, &    \mbox{\rm if $i_+$ exists,} \\
0, & \mbox{\rm otherwise.}
\end{cases}
\]
Similarly, let let $i_-\geq 1$, be the smallest integer $1\leq i \leq r$, if it exists, such that $f^{(i)}(-1)\neq 0$, and denote
\[
D_-(r,f) :=
\begin{cases}
(2 r!)^{-1} |f^{(i_-)}(-1)|, &    \mbox{\rm if $i_-$ exists,} \\
0, & \mbox{\rm otherwise.}
\end{cases}
\]

Using the above as well as the observation that $|I_1|+|I_2| \to 0$ and $|I_{n-1}|+|I_n| \to 0$ as $n\to \infty$, we can strengthen   \thm{thm2} as follows.

\begin{lemma}\label{spline}Given $r\in\N$, there is a constant $c=c(r)$ with the property that if a function $f\in \C^r[-1,1]\cap\mon$,   then there is an integer $\NN=\NN(f,r)$   depending on $f$ and $r$, such that for $n\ge \NN$, there are nondecreasing continuous piecewise polynomials $S\in\Sigma_{r+2,n}$ satisfying \ineq{interspline}, \ineq{interend},
\be \label{hermite}
S^{(i)}(-1) = f^{(i)}(-1) \andd S^{(i)}(1) = f^{(i)}(1), \quad \text{for all $1\leq i \leq r$,}
\ee
\be\label{der1}
S'(x)\ge D_+(r,f)   (1-x)^{r-1},\quad x\in (x_2,1],
\ee
and
\be\label{der2}
S'(x)\ge D_-(r,f)  (x+1)^{r-1},\quad x\in[-1,x_{n-2}).
\ee
\end{lemma}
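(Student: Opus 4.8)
The plan is to verify that the spline $S$ already produced by \thm{thm2} enjoys these extra properties once $n$ is large, rather than to construct a new one. Fix $r\in\N$ and $f\in\C^r[-1,1]\cap\mon$, and for $n\ge\widetilde\NN(f,r)$ let $S\in\Sigma_{r+2,n}$ be the nondecreasing continuous piecewise polynomial from \thm{thm2}; by definition it already satisfies \ineq{interspline} and \ineq{interend}. Property \ineq{hermite} is read off the explicit form of the polynomial pieces of $S$ near $\pm1$ recalled right after \thm{thm2}: on $[x_2,1]$ the function $S$ coincides with the degree-$r$ Taylor polynomial of $f$ at $1$ plus the term $a_+(n;f)(x-1)^{r+1}$, and the first $r$ derivatives of that term vanish at $x=1$; hence $S^{(i)}(1)=f^{(i)}(1)$ for $0\le i\le r$, and symmetrically $S^{(i)}(-1)=f^{(i)}(-1)$. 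So only \ineq{der1} and \ineq{der2} need an argument, and by symmetry I treat only \ineq{der1}.

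If $D_+(r,f)=0$ there is nothing to prove, since $S$ is nondecreasing and hence $S'\ge0$ on $(x_2,1]$. Assume $D_+(r,f)>0$, so $i_+$ exists. I would first record a sign fact: because $f$ is nondecreasing and $i_+$ is the least index in $\{1,\dots,r\}$ with $f^{(i_+)}(1)\ne0$, Taylor's formula at $1$ gives, as $x\to1^-$,
\[
0\ge f(x)-f(1)=\frac{f^{(i_+)}(1)}{i_+!}(x-1)^{i_+}+o\bigl(|x-1|^{i_+}\bigr),
\]
which forces $f^{(i_+)}(1)(-1)^{i_+}<0$, \ie $f^{(i_+)}(1)(-1)^{i_+-1}=|f^{(i_+)}(1)|$.

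Differentiating the polynomial piece of $S$ on $[x_2,1]$ and using $f^{(i)}(1)=0$ for $1\le i<i_+$, one may write, for $x\in(x_2,1]$,
\[
S'(x)=\frac{f^{(i_+)}(1)}{(i_+-1)!}(x-1)^{i_+-1}\bigl(1+\varepsilon_n(x)\bigr),
\]
where $\varepsilon_n(x)$ collects the terms carrying $(x-1)^{i-i_+}$, $i_+<i\le r$, together with $(r+1)a_+(n;f)(x-1)^{r-i_++1}$, each divided by the leading coefficient. On $(x_2,1]$ one has $0\le1-x\le|I_1|+|I_2|\to0$; moreover, using the bound $|a_+(n;f)|\le\bigl(r!(|I_1|+|I_2|)\bigr)^{-1}\omega_1(f^{(r)},|I_1|+|I_2|;I_1\cup I_2)$ recalled after \thm{thm2},
\[
\bigl|a_+(n;f)(x-1)^{r-i_++1}\bigr|\le\frac{(|I_1|+|I_2|)^{r-i_+}}{r!}\,\omega_1\bigl(f^{(r)},|I_1|+|I_2|;I_1\cup I_2\bigr)\longrightarrow0
\]
because $i_+\le r$ and $f^{(r)}$ is uniformly continuous, while the other terms of $\varepsilon_n$ carry positive powers of $1-x$ and also vanish. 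Hence $\max_{x\in(x_2,1]}|\varepsilon_n(x)|\to0$, so there is $\NN_+=\NN_+(f,r)$ with $1+\varepsilon_n(x)\ge\tfrac12$ for $n\ge\NN_+$; combining this with the sign fact and $(x-1)^{i_+-1}=(-1)^{i_+-1}(1-x)^{i_+-1}$ gives $S'(x)\ge\tfrac{|f^{(i_+)}(1)|}{2(i_+-1)!}(1-x)^{i_+-1}$ on $(x_2,1]$. Finally, since $i_+\le r$ and $1-x\le|I_1|+|I_2|<1$ for large $n$, we have $(i_+-1)!\le r!$ and $(1-x)^{i_+-1}\ge(1-x)^{r-1}$, so $S'(x)\ge D_+(r,f)(1-x)^{r-1}$, which is \ineq{der1}. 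The symmetric argument near $-1$ gives \ineq{der2} for $n$ past some $\NN_-$, and one takes $\NN:=\max\{\widetilde\NN,\NN_+,\NN_-\}$.

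The main obstacle is exactly this derivative estimate: one has to extract the correct sign of the first nonvanishing endpoint derivative of a monotone $\C^r$ function, and then control, uniformly on the shrinking interval $(x_2,1]$, all the lower-order terms — in particular the coefficient $a_+(n;f)$, which need not be small on its own but is harmless because it multiplies a sufficiently high power of $x-1$ — so that the leading term of $S'$ dominates once $n$ is large.
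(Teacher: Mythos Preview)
Your proof is correct and follows exactly the approach the paper has in mind: the paper does not give a detailed argument for this lemma, merely remarking that the strengthening follows from the explicit Lagrange--Hermite form of the polynomial pieces on $[x_2,1]$ and $[-1,x_{n-2}]$ together with the observation that $|I_1|+|I_2|\to 0$ and $|I_{n-1}|+|I_n|\to 0$. You have faithfully filled in those details --- reading off \ineq{hermite} from the Taylor structure, extracting the sign of $f^{(i_+)}(1)$ from monotonicity, and showing that the remainder $\varepsilon_n$ (including the $a_+(n;f)$ contribution) is uniformly small on $(x_2,1]$ for large $n$.
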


\medskip

\begin{proof}[\bf Proof of  \thm{thm1}] Given $r\in\N$ and a nondecreasing $f\in C^{(r)}[-1,1]$, let $\psi\in\Phi^2$
be such that $\omega_2(f^{(r)},t) \sim \psi(t)$,   denote $\phi(t):=t^r\psi(t)$, and note that $\phi\in\Phi^{r+2}$.

For each $n\ge\NN$, we take the piecewise polynomial $S\in\Sigma_{r+2,n}$ of   \lem{spline} and we observe that
\[
\omega_{r+2}(f,t)\le t^r\omega_2(f^{(r)},t) \sim \phi(t),
\]
so that by   \lem{b_k<c} with $k=r+2$, we conclude that
\[
b_{r+2}(S,\phi)\le c := \varsigma.
\]
Now,   it follows from \ineq{der1} and \ineq{rho}  that
\[
\min_{x\in I_2} S'(x)\geq D_+(r,f) |I_1|^{r-1} \geq 3^{-r+1} D_+(r,f) |I_2|^{r-1}
\]
and, similarly, \ineq{der2} yields
\[
\min_{x\in I_{n-1}} S'(x)  \geq 3^{-r+1} D_-(r,f) |I_{n-1}|^{r-1} .
\]
Hence, using \thm{step111} with $k=r+2$,
 $d_+ := \varsigma^{-1} 3^{-r+1} D_+(r,f)$,  $d_- := \varsigma^{-1} 3^{-r+1} D_-(r,f)$ and $\alpha= 2k-2= 2r+2$, and observing that
$b_{r+2}(\varsigma^{-1} S, \phi) \le 1$,  we conclude that there exists a polynomial $P\in \Poly_{cn} \cap \mon$ such that
\be\label{estim}
|S(x)-P(x)|\le c\delta_n^{2r+2} (x) \rho^r_n(x)\psi(\rho_n(x)) ,\quad x\in[-1,1].
\ee

In particular, for $x\in I_1\cup I_n$, $x\neq-1,1$, using the fact that $\rho_n(x) \sim n^{-2}$ for these $x$, and $t^{-2}\psi(t)$ is nonincreasing  we have
\begin{align}\label{ends}
|S(x)-P(x)|&\le c(n\varphi(x))^{2r+2}\rho^r_n(x)\psi(\rho_n(x))\\ \nonumber
& \le c n^2 \varphi^{2r+2}(x)    \left( \frac{n\rho_n(x)}{\varphi(x)}\right)^2 \psi\left(\frac{\varphi(x)}{n} \right) \\ \nonumber
&\le c\varphi^{2r}(x)\omega_2\left(f^{(r)},\frac{\varphi(x)}n\right).
\end{align}
In turn, this implies for $x\in I_1\cup I_n$, that
\[
|S(x)-P(x)|\le c\left(\frac{\varphi(x)}n\right)^r\omega_2\left(f^{(r)},\frac{\varphi(x)}n\right),
\]
which combined with \ineq{estim} implies
\be\label{endss}
|S(x)-P(x)|\le c\left(\frac{\varphi(x)}n\right)^r\omega_2\left(f^{(r)},\frac{\varphi(x)}n\right),\quad x\in[-1,1].
\ee
Finally, \ineq{endss} together with \ineq{interspline} yield \ineq{interpol}, and \ineq{ends} together with \ineq{interend} yield \ineq{interpol1}. The proof of   \thm{thm1} is complete.
\end{proof}


\begin{bibsection}
\begin{biblist}

\bib{DY}{article}{
author={DeVore, R. A.},
author={Yu, X. M.},
title={Pointwise estimates for monotone polynomial approximation},
journal={Constr. Approx.},
volume={1},
date={1985},
pages={323--331},
}

\bib{DS}{book}{
author={Dzyadyk, V. K.},
author={Shevchuk, I. A.},
title={Theory of Uniform Approximation of Functions by Polynomials},
publisher={Walter de Gruyter},
place={Berlin},
date={2008},
pages={xv+480},
}

\bib{DLS}{article}{
author={Dzyubenko, G. A.},
author={Leviatan, D.},
author={Shevchuk, I. A.},
title={Pointwise estimates of coconvex approximation},
journal={Jaen J. Approx.},
volume={6},
date={2014},
pages={261--295},
}

\bib{GLSW}{article}{
author={Gonska, H. H.},
author={Leviatan, D.},
author={Shevchuk, I. A.},
author={Wenz, H.-J.},
title={Interpolatory pointwise estimates for polynomial
approximations},
journal={Constr. Approx.},
volume={16},
date={2000},
pages={603--629},
 }

\bib{iliev}{article}{
   author={Iliev, G. L.},
   title={Exact estimates for monotone interpolation},
   journal={J. Approx. Theory},
   volume={28},
   date={1980},
   number={2},
   pages={101--112},
}

\bib{K}{article}{
author={Kopotun, K. A.},
title={Pointwise and uniform estimates for convex approximation of functions by algebraic polynomial},
journal={Constr. Approx.},
volume={10},
date={1994},
pages={153--178},
 }

\bib{K-coconvex}{article}{
   author={Kopotun, K. A.},
   title={Coconvex polynomial approximation of twice differentiable
   functions},
   journal={J. Approx. Theory},
   volume={83},
   date={1995},
   number={2},
   pages={141--156},
}

\bib{K-sim}{article}{
   author={Kopotun, K. A.},
   title={Simultaneous approximation by algebraic polynomials},
   journal={Constr. Approx.},
   volume={12},
   date={1996},
   number={1},
   pages={67--94},
}

\bib{KLS}{article}{
author={Kopotun, K. A.},
author={Leviatan, D.},
author={Shevchuk, I. A.},
title={The degree of coconvex polynomial approximation},
journal={Proc. Amer. Math. Soc.},
volume={127},
date={1999},
pages={409--415},
}

\bib{KLPS}{article}{
   author={Kopotun, K. A.},
   author={Leviatan, D.},
   author={Prymak, A.},
   author={Shevchuk, I. A.},
   title={Uniform and pointwise shape preserving approximation by algebraic
   polynomials},
   journal={Surv. Approx. Theory},
   volume={6},
   date={2011},
   pages={24--74},
}

\bib{LP}{article}{
author={Leviatan, D.},
author={Petrova, I. L.},
title={Interpolatory estimates in monotone piecewise polynomial approximation},
journal={J. Approx. Theory},
volume={223},
   date={2017},
   pages={1--8},
}

\bib{LS}{article}{
author={Leviatan, D.},
author={Shevchuk, I. A.},
title={Nearly comonotone approximation},
journal={J. Approx. Theory},
volume={95},
date={1998},
pages={53--81},
}

\bib{LS98}{article}{
   author={Leviatan, D.},
   author={Shevchuk, I. A.},
   title={Monotone approximation estimates involving the third modulus of smoothness},
   conference={
      title={Approximation theory IX, Vol. I.},
      address={Nashville, TN},
      date={1998},
   },
   book={
      series={Innov. Appl. Math.},
      publisher={Vanderbilt Univ. Press, Nashville, TN},
   },
   date={1998},
   pages={223--230},
}

\bib{LS2002}{article}{
author={Leviatan, D.},
author={Shevchuk, I. A.},
title={Coconvex approximation},
journal={J. Approx. Theory},
volume={118},
date={2002},
pages={20--65},
}

\bib{pr}{article}{
   author={Passow, E.},
   author={Raymon, L.},
   title={The degree of piecewise monotone interpolation},
   journal={Proc. Amer. Math. Soc.},
   volume={48},
   date={1975},
   pages={409--412},
}

\bib{S}{article}{
   author={Shevchuk, I. A.},
   title={Approximation of monotone functions by monotone polynomials},
   language={Russian, with Russian summary},
   journal={Mat. Sb.},
   volume={183},
   date={1992},
   number={5},
   pages={63--78},
   translation={
      journal={Russian Acad. Sci. Sb. Math.},
      volume={76},
      date={1993},
      number={1},
      pages={51--64},
      issn={1064-5616},
   },
}

\end{biblist}
\end{bibsection}

\end{document}